\numberwithin{equation}{section}
\newtheorem{theorem}{Theorem}[section]
\newtheorem{remark}[theorem]{Remark}
\newtheorem{corollary}[theorem]{Corollary}
\theoremstyle{nonumberplain}
\newtheorem{proof}{Proof}
\newcolumntype{d}{D{.}{.}{-1}}
\definecolor{newgreen}{RGB}{0,85,68}
\definecolor{aggrored}{RGB}{255,47,47}
\definecolor{steelblue3}{RGB}{33,29,132}
\definecolor{greenish}{RGB}{0, 153, 51}
\definecolor{blueish}{RGB}{51, 102, 204}
\definecolor{orangeish}{RGB}{255, 153, 51}
\definecolor{pinkish}{RGB}{205,147,204}
\definecolor{lightgrey}{RGB}{211, 211, 211}
\definecolor{darkgrey}{RGB}{111, 111, 111}
\DeclarePairedDelimiter\abs{\lvert}{\rvert}
\DeclarePairedDelimiterX\set[2]{\{}{\}}{\,#1 \;\delimsize\vert\; #2\,}
\newcommand{\sign}{\operatorname{sign}}
\numberwithin{equation}{section}
\newcommand{\ud}{\mathrm{d}}
\renewcommand{\epsilon}{\ensuremath\varepsilon}
\newcommand   {\qpd}  [2][]{\partial_{#1}#2}
\newcommand   {\fd}   [2][]{\ensuremath{\frac{d}{d#1}{#2}}}
\newcommand   {\grad} [2][]{\ensuremath{\nabla_{#1}#2}}
\renewcommand {\div}  [2][]{\ensuremath{\nabla_{#1}\cdot #2}}
\newcommand   {\Be}{\epsilon}
\newcommand   {\sndxtime}{\tau_{\mathrm{char}}}
\newcommand   {\tend}{t_f}
\newcommand   {\stencil}{\Sigma}
\renewcommand{\vec}[1]{\ensuremath{\boldsymbol{#1}}}
\newcommand{\bbf}{\vec{f}}
\newcommand{\bF}{\vec{F}}
\newcommand{\bg}{\vec{g}}
\newcommand{\bG}{\vec{G}}
\newcommand{\bbs}{\vec{s}}
\newcommand{\bS}{\vec{S}}
\newcommand{\bu}{\vec{u}}
\newcommand{\bU}{\vec{U}}
\newcommand{\bv}{\vec{v}}
\newcommand{\bw}{\vec{w}}
\newcommand{\bW}{\vec{W}}
\newcommand{\cF}{\ensuremath{\mathcal{F}}}
\newcommand{\cL}{\ensuremath{\mathcal{L}}}
\newcommand{\cR}{\ensuremath{\mathcal{R}}}
\newcommand{\cW}{\ensuremath{\mathcal{W}}}
\newcommand{\algo}[1]{\textsc{#1}\xspace}
\newcommand{\domain}{\ensuremath{\Omega}\xspace}
\newcommand{\cell}{I}
\newcommand{\scell}{V}
\newcommand{\minmod}{\algo{MinMod}}
\newcommand{\half}{\ensuremath{1/2}}
\newcommand{\adidx}{\ensuremath{\gamma}}
\newcommand{\eos}{EoS\xspace}
\newcommand{\const}{\text{const}}
\newcommand{\err}{\operatorname{err}}
\newcommand{\etal}{et al.}
\begin{document}

\begin{frontmatter}

\title{Well-balanced finite volume schemes for nearly steady adiabatic flows}

\author[auth1]{L.~Grosheintz-Laval \corref{cor1}}
\cortext[cor1]{Corresponding author}
\ead{luc.grosheintz@sam.math.ethz.ch}
\author[auth1]{R.~K\"appeli}

\address[auth1]{
Seminar for Applied Mathematics (SAM),
Department of Mathematics,
ETH Z\"urich,
CH-8092 Z\"urich,
Switzerland}

\begin{keyword}
Numerical methods \sep
Hydrodynamics \sep
Source terms \sep
Well-balanced schemes

\end{keyword}

\begin{abstract}
  We present well-balanced finite volume schemes designed to approximate the Euler
  equations with gravitation.
  They are based on a novel local steady state reconstruction.
  The schemes preserve a discrete equivalent of steady adiabatic
  flow, which includes non-hydrostatic equilibria.
  The proposed method works in Cartesian, cylindrical and spherical coordinates.
  The scheme is not tied to any specific numerical flux and can be combined with
  any consistent numerical flux for the Euler equations, which provides great
  flexibility and simplifies the integration into any standard finite volume
  algorithm.
  Furthermore, the schemes can cope with general convex equations of state, which
  is particularly important in astrophysical applications.
  Both first- and second-order accurate versions of the schemes and their
  extension to several space dimensions are presented.
  The superior performance of the well-balanced schemes compared to standard
  schemes is demonstrated in a variety of numerical experiments.
  The chosen numerical experiments include simple one-dimensional problems in
  both Cartesian and spherical geometry, as well as two-dimensional simulations
  of stellar accretion in cylindrical geometry with a complex multi-physics
  equation of state.

\end{abstract}

\end{frontmatter}

\section{Introduction}
\label{sec:intro}
A great variety of physical phenomena can be modeled by the Euler equations
with gravitational forces.
Applications extend from the study of atmospheric phenomena, such as
numerical weather prediction and climate modeling, to the numerical
simulation of the climate of exoplanets, convection in stars, accretion
processes, and stellar explosions.
The Euler equations with gravity source terms express the conservation of mass,
momentum and energy as
\begin{align}\label{eq:euler}
  \begin{split}
  \qpd[t]{\rho} + \div{\rho \bv} &= 0 , \\
  \qpd[t]{\rho \bv} + \div{\left(\rho \bv \otimes \bv\right)} + \grad{p}
    &= - \rho \grad{\phi} , \\
  \qpd[t]{E} + \div{\left[ (E + p) \bv \right]} &= - \rho \bv \cdot \grad{\phi}
  ,
  \end{split}
\end{align}
where $\rho$ is the mass density and $\bv$ the velocity.
The total fluid energy $E = \rho e + \frac{1}{2} \rho v^2$ is the sum of internal
and kinetic energy densities.
An equation of state closes the system and describes the relation between
density, specific internal energy $e$ and the pressure $p=p(\rho,e)$.

The source terms model the influence of gravity onto the fluid through the
gravitational potential $\phi$.
The latter may either be a fixed function or, in the case of self-gravity, be
determined by the fluid's mass distribution through Poisson's equation
\begin{equation}
\label{eq:intro0020}
  \nabla^2 \phi = 4 \pi G \rho,
\end{equation}
where $G$ is the gravitational constant.

The Euler equations with gravitation \eqref{eq:euler} are a typical system of
balance laws
\begin{align}
\label{eq:intro0030}
  \qpd[t]{}\bu + \div[]{}\bbf(\bu) = \bbs(\bu)
  ,
\end{align}
where $\bu$, $\bbf = \bbf(\bu)$ and $\bbs = \bbs(\bu)$ are the conserved
variables, fluxes and source terms, respectively.
A distinctive feature of systems of balance laws is the presence of non-trivial
steady states
\begin{equation}
\label{eq:intro0040}
  \div[]{}\bbf(\bu) = \bbs(\bu)
  ,
\end{equation}
which are characterized by a subtle flux-source balance.

A rich class of steady states for the Euler equations \eqref{eq:euler} is
the hydrostatic equilibrium
\begin{equation}
\label{eq:intro0050}
  \grad{p} = - \rho \grad{\phi}
  ,
\end{equation}
where gravity forces are balanced by pressure forces.
As a matter of fact, \cref{eq:intro0050} specifies only a mechanical
equilibrium.
To fully characterize the equilibrium, a thermal stratification needs to be
supplemented.
For isentropic conditions, \cref{eq:intro0050} can easily be integrated to
\begin{equation}
\label{eq:intro0080}
  h + \phi = \const
  ,
\end{equation}
where $h$ is the specific enthalpy.
For different thermal stratifications, such as isothermal or generally for
barotropic fluids (in which density is a function of pressure only),
\cref{eq:intro0050} can be integrated into similar forms
(see e.g. \cite{LandauLifshitz1987}).
In many applications, the dynamics of interest are taking place near such a
steady state.
This is for example the case in numerical weather prediction and climate
modeling \cite{Botta2004539}, the simulation of waves in stellar atmospheres
\cite{Fuchs20104033,GundlachLeVeque2011}, and the simulation of convection in
stars \cite{PopovEtAl2019}.

Another class of steady states is provided by steady adiabatic flow for which
Bernoulli's equation
\begin{equation}
\label{eq:intro0060}
  \frac{v^{2}}{2} + h + \phi = \const
\end{equation}
holds along each streamline, but may differ from streamline to streamline
(see e.g. \cite{LandauLifshitz1987}).
In many astrophysical applications, the dynamics of interest are realized near
steady flow such as in accretion or wind phenomena
\cite{HolzerAxford1970,FrankKingRaine2002,PringleKing2007}.

Solutions to systems of balance laws can often only be approximated with the
help of numerical methods.
There exist several types of accurate and robust discretization methods such
as finite difference, finite volume, and discontinuous Galerkin (DG) methods.
However, standard numerical methods have in general difficulties near steady
states as they do not necessarily satisfy a discrete equivalent of the
flux-source balance \cref{eq:intro0040}.
Hence such states are not preserved exactly but are only approximated with an
error proportional to the truncation error of the scheme.
If the interest relies in the simulation of small perturbations on top of a
steady state, the numerical resolution has to be increased to the point where
the truncation errors do not obscure these small perturbations.
This may result in prohibitively high computational costs, especially in
multiple dimensions.

To overcome the difficulties of standard discretization methods, the
well-balanced design principle was introduced by Cargo \& LeRoux and
Greenberg \& LeRoux \cite{CargoLeRoux1994,Greenberg1996}.
In well-balanced schemes, a discrete equivalent of the steady state of interest
is exactly preserved.
Many such schemes have been developed, especially for the shallow water
equations with non-trivial bottom topography, see e.g.
\cite{LeVeque1998,Gosse2000,Audusse2004,Noelle2006,NoelleEtAl2007,CastroEtAl2007} and
references therein.
An extensive review on well-balanced and related schemes for many applications
is also given in the book by Gosse \cite{Gosse2013}.

Well-balanced schemes for the Euler equations with gravitation have received
much attention in the literature recently.
Most of the schemes focus on hydrostatic case \cref{eq:intro0050}.
Pioneering schemes have been developed by Cargo \& LeRoux
\cite{CargoLeRoux1994}, and LeVeque \& Bale \cite{LeVequeEtAl1998,LeVeque1999}.
The latter apply the quasi-steady wave-propagation algorithm of LeVeque
\cite{LeVeque1998} to the Euler equations with gravity.
Botta et al. \cite{Botta2004539} designed a well-balanced finite volume scheme
for numerical weather prediction applications.
More recently, several well-balanced finite volume
\cite{LeVeque2011,Kaeppeli2014,Desveaux2014,Chandrashekar2015,Kaeppeli2016,ToumaKoleyKlingenberg2016,LiXing2016,Kaeppeli2017,ChertockCuiKurganovEtAl2018,Gaburro2018,BerberichEtAl2019a,BerberichEtAl2019,KlingenbergEtAl2019,Krause2019,CastroPares2020,BerberichEtAl2020},
finite difference \cite{Xing2013,LiXing2018a}
and discontinuous Galerkin \cite{Li2015,ChandrashekarZenk2017,LiXing2018}
schemes have been presented.
Magneto-hydrostatic steady state preserving well-balanced finite volume schemes
were devised in \cite{Fuchs20104033,FuchsEtAl2010a,FuchsEtAl2011}.

Well-balanced schemes for steady adiabatic flow have received much less
attention in the literature.
LeVeque \& Bale \cite{LeVeque1999} adapted the quasi-steady wave propagation
algorithm to handle steady states with non-zero velocity.
More recently, Bouchut \& de Luna \cite{BouchutLuna2010} have constructed a
well-balanced scheme for subsonic states of the Euler-Poisson system.

In this paper, we extend the well-balanced second-order finite volume schemes
of K\"{a}ppeli \& Mishra \cite{Kaeppeli2014} to steady adiabatic flow.
The schemes possess the following novel features:
\begin{itemize}
\item They are well-balanced for steady adiabatic flow by using the Bernoulli
      equation \cref{eq:intro0060} for the local equilibrium preserving
      reconstruction and gravitational source terms discretization.
      Subsonic, supersonic and transonic steady states are captured.
\item They are well-balanced for any consistent numerical flux.
      This allows a straightforward implementation within any standard finite
      volume algorithm.
      For numerical fluxes capable of recognizing stationary shock waves
      exactly, the schemes are able to preserve steady flow with shocks
      located at cell interfaces.
\item They are applicable to general convex equations of states, which is
      particularly important for astrophysical applications.
\item They are designed for Cartesian, cylindrical and spherical coordinate
      systems, which are for instance often employed in astrophysical
      applications.
\item They are extended to several space dimensions and are well-balanced for
      steady adiabatic flow with grid-aligned streamlines.
\end{itemize}
The rest of the paper is organized as follows: the well-balanced finite volume
schemes are presented in \cref{sec:nm}.
Numerical results are presented in \cref{sec:numex} and a summary of the paper
is provided in \cref{sec:conc}.

\section{Numerical Method}
\label{sec:nm}
\subsection{Numerical method in one dimension}
\label{sec:nm_1d}
We consider the one-dimensional Euler equations with gravitation
\cref{eq:euler} in the following compact form of a balance law
\begin{align}\label{eq:balance_short}
    \qpd[t]{}\bu + \qpd[x]{} \bbf = \bbs
  ,
\end{align}
where
\begin{equation}
  \label{eq:euler_short}
  \bu = \begin{bmatrix}
          \rho     \\
          \rho v \\
          E
        \end{bmatrix}
  \; , \quad
  \bbf = \begin{bmatrix}
           \rho v       \\
           \rho v^2 + p \\
           (E + p) v
         \end{bmatrix}
  \; , \quad
  \mathrm{and} \quad
  \bbs = - \begin{bmatrix}
             0        \\
             \rho     \\
             \rho v
           \end{bmatrix} \qpd[x]{\phi}
\end{equation}
denote the vector of conserved variables, fluxes and source terms,
respectively.
The primitive variables will be denoted by $\bw = [\rho,v,p]^T$.
Furthermore, we introduce the notation $f^{(\rho)}$, $f^{(\rho
v)}$ and $f^{(E)}$ for the mass, momentum and energy flux, respectively.
We will use the same superscript notation to indicate specific components of
the source term, e.g. $s^{(\rho v)}$ denotes the momentum source term.

Next, we briefly outline a standard first- and second-order finite volume
discretization of the above equations to fix the notation.
Subsequently, we describe our novel well-balanced schemes in detail.

\subsubsection{Standard finite-volume discretization}\label{sec:standard_fvm}
The spatial domain of interest is discretized into a number of finite
volumes or cells $\cell_i = [x_{i-\half}, x_{i+\half}]$.
For the $i$-th cell $\cell_{i}$, the $x_{i\pm\half}$ denote the left/right
cell interfaces and the $x_{i} = (x_{i-\half}+x_{i+\half})/2$ the cell
centers.
For simplicity, we assume uniform cell sizes
$\Delta x = x_{i+\half}-x_{i-\half}$.
However, varying cell size can easily be accommodated for.

A one-dimensional semi-discrete finite volume method for
\cref{eq:euler_short} then reads
\begin{align}
\label{eq:nm_1d_fv_0010}
    \fd[t]{}\bU_i
  = \cL(\bU)
  =  - \frac{1}{\Delta x}\left(\bF_{i+\half} - \bF_{i-\half}\right)
     + \bS_i
  .
\end{align}
Here $\bU_i = \bU_{i}(t)$ denotes the approximate average of the solution
$\bu(x,t)$ over cell $\cell_{i}$,
\begin{equation}
  \label{eq:nm_1d_fv_0020}
  \bU_i(t) \approx \frac{1}{\Delta x} \int_{\cell_{i}} \bu(x,t) ~ \ud x
  ,
\end{equation}
the $\bF_{i\pm\half}$ the numerical fluxes through the left/right cell
interface and $\bS_{i}$ the approximate cell average of the source term.
Moreover, the shorthand $\cL(\bU)$ is introduced for the spatial discretization
operator.

\paragraph{Numerical Flux}
The numerical fluxes are obtained by the (approximate) solution of a Riemann
problem at each cell interface
\begin{equation}
  \label{eq:nm_1d_fv_0030}
  \bF_{i+1/2} = \cF ( \bW_{i+1/2-} ,\bW_{i+1/2+} ),
\end{equation}
where $\cF$ denotes a consistent, i.e. $\cF(\bW,\bW) = \bbf(\bW)$, and
Lipschitz continuous numerical flux function.
In the numerical experiments, we will use the
HLL(E)~\cite{HartenEtAl1983,Einfeldt1988} and HLLC~\cite{Toro1994} solvers with
carefully chosen waves speeds allowing the resolution of isolated flow
discontinuities (see e.g.~\cite{Batten1997,Toro2009}).

\paragraph{Reconstruction}
Input to the numerical flux function are the traces of the primitive
variables $\bW_{i+1/2\pm}$ at the cell interface.
These are obtained by some non-oscillatory reconstruction procedure $\cR$
\begin{equation}
  \label{eq:nm_1d_fv_0040}
  \bW_{i}(x) = \cR(x; \{\bW_{k}\}_{k \in \stencil_{i}}),
\end{equation}
where $\stencil_{i}$ is the stencil of the reconstruction procedure for cell
$\cell_{i}$.
The left/right cell interface traces of the primitive variables are then
simply obtained by evaluating the reconstruction in cell
$\cell_{i}/\cell_{i+1}$ at cell interface $x_{i+1/2}$
\begin{equation}
  \label{eq:nm_1d_fv_0050}
  \bW_{i+1/2-} = \bW_{i  }(x_{i+1/2})
               = \cR(x_{i+1/2}; \{\bW_{k}\}_{k \in \stencil_{i  }})
  \quad \text{and} \quad
  \bW_{i+1/2+} = \bW_{i+1}(x_{i+1/2})
               = \cR(x_{i+1/2}; \{\bW_{k}\}_{k \in \stencil_{i+1}})
  .
\end{equation}
Many such reconstruction procedures have been elaborated in the literature
and an incomplete list includes the Total Variation Diminishing (TVD) and
the Monotonic Upwind Scheme for Conservation Laws (MUSCL) methods
(see e.g.~\cite{Leer1979,HartenEtAl1983,Sweby1984,Laney1998,LeVeque2002,Toro2009}),
the Piecewise Parabolic Method (PPM)~\cite{ColellaWoodward1984}, the
Essentially Non-Oscillatory (ENO) (see e.g.~\cite{HartenEtAl1987}),
Weighted ENO (WENO) (see e.g.~\cite{Shu2009} and references therein) and
Central WENO (CWENO) methods (see e.g.~\cite{CraveroEtAl2017}).

In the schemes developed below, we will use spatially first- and second-order
accurate TVD/MUSCL type reconstructions.
Up to this spatial accuracy, point values and cell averages agree and the
cell-averaged primitive variables are simply obtained from the cell-averaged
conserved variables $\bW_{i} = \bw(\bU_{i})$.
Then, a spatially first-order accurate piecewise constant reconstruction is
simply given by
\begin{equation}
  \label{eq:nm_1d_fv_0060}
  \bW_{i}(x) = \cR(x; \{\bW_{i}\}) = \bW_{i}
  .
\end{equation}
A spatially second-order accurate piecewise linear reconstruction is
\begin{equation}
  \label{eq:nm_1d_fv_0061}
  \bW_{i}(x) = \cR(x; \{\bW_{i-1}, \bW_{i}, \bW_{i+1}\})
             = \bW_{i} + D\bW_{i} ~ (x - x_{i})
  ,
\end{equation}
where $D\bW_{i}$ are some appropriately limited slopes.
Below we will make use of the so-called generalized \minmod slope limiter
family
\begin{equation}
  \label{eq:nm_1d_fv_0062}
  D\bW_{i} = \minmod \left(
                       \theta \frac{\bW_{i  } - \bW_{i-1}}{  \Delta x} ,
                              \frac{\bW_{i+1} - \bW_{i-1}}{2 \Delta x} ,
                       \theta \frac{\bW_{i+1} - \bW_{i  }}{  \Delta x}
                     \right)
  ,
\end{equation}
where $\theta \in [1,2]$ is a parameter and the \minmod function is defined
by
\begin{equation}
  \label{eq:nm_1d_fv_0063}
    \minmod(a_1,a_2,...)
  = \begin{cases}
      \min_j \left\{ a_j \right\} & \mathrm{if} \; a_j > 0 \; \forall \; j , \\
      \max_j \left\{ a_j \right\} & \mathrm{if} \; a_j < 0 \; \forall \; j , \\
      0                           & \mathrm{otherwise}
      .
    \end{cases}
\end{equation}
\cref{eq:nm_1d_fv_0062} has to be understood component-wise.
For $\theta = 1$ ($\theta = 2$), \cref{eq:nm_1d_fv_0062} reproduces the
traditional \minmod (monotonized centered) limiter
(see e.g.~\cite{NessyahuTadmor1990,JiangTadmor1998}).

\paragraph{Source terms}
The standard second-order discretization of the cell-averaged source term is
simply the physical source term evaluated at the cell center
\begin{equation}\label{eq:ub_src}
  \bS_{i} = - \begin{bmatrix}
                0        \\
                \rho_{i} \\
                \rho v_{i}
              \end{bmatrix} \qpd[x]{}{\phi}(x_i)
  \approx \frac{1}{\Delta x} \int_{\cell_{i}} \bbs(\bu(x,t)) ~ \ud x
  .
\end{equation}
Here the gravitational acceleration may either be calculated analytically
or with finite differences
\begin{align}\label{eq:ub_src_fd}
  \qpd[x]{}{\phi}(x_i)
  \approx \frac{\phi_{i+\half} - \phi_{i-\half}}{\Delta x}
  ,
\end{align}
where $\phi_{i\pm\half} \approx \phi(x_{i\pm\half})$ is an approximation of
the gravitational potential at cell interfaces.

\paragraph{Temporal discretization}
The temporal domain of interest $[0,\tend]$ is discretized into time steps
$\Delta t^{n} = t^{n+1} - t^{n}$, where the superscript labels the respective
time levels.
The system of ordinary differential equations \cref{eq:nm_1d_fv_0010} can be
integrated in time with the strong stability-preserving Runge-Kutta methods
(see \cite{GottliebShuTadmor2001} and references therein).
In particular, we will use the temporally first-order accurate Euler method
\begin{equation}
\label{eq:SSP-RK1}
  \bU_{i}^{n+1} = \bU_{i}^{n} + \Delta t^n \cL_i(\bU^{n})
\end{equation}
and second-order accurate Heun method (SSP-RK2)
\begin{align}
  \label{eq:SSP-RK2}
  \begin{split}
    \bU_{i}^{(1)} & = \bU_{i}^{n} + \Delta t^n \cL_i(\bU^{n}), \\
    \bU_{i}^{(2)} & = \bU_{i}^{(1)} + \Delta t^n \cL_i(\bU^{(1)}), \\
    \bU_{i}^{n+1} & = \frac{1}{2} \left(\bU_{i}^{n} + \bU_{i}^{(2)}\right)
    .
  \end{split}
\end{align}
Since the above choices are explicit in time, the time step $\Delta t^{n}$ is
required to fulfill the CFL condition for finite volume methods with a CFL
number specified in the numerical experiments.
However, because the presented methods are only concerned with the spatial
reconstruction procedure and source term discretization, the derived
techniques are, in principle, not restricted to explicit time integrators.

This concludes the description of a standard spatially and temporally
first/second-order accurate finite volume method for the one-dimensional
balance law \cref{eq:balance_short}.
We refer to the excellent textbooks available in the literature for further
details and derivations, e.g.
\cite{GodlewskiRaviart1996,Laney1998,LeVeque2002,Hirsch2007,Toro2009}.
However, it turns out that standard schemes, as just outlined, are in general
not capable of preserving a discrete equivalent of steady adiabatic flow
\cref{eq:intro0060}.
Next, we describe the components allowing the exact (up to machine precision)
discrete preservation of such steady states.

\subsection{Well-balanced finite volume discretization}
\label{subsec:nm_1d_wb}
In this section, we describe the modifications required to well-balance the
standard finite volume scheme from \cref{sec:standard_fvm}.
One-dimensional steady adiabatic flow is given by
\begin{equation}
\label{eq:nm_1d_wb_0010}
  s = \const
  , \quad
  \rho v = \const
  , \quad
  \frac{v^{2}}{2} + h + \phi = \const.
\end{equation}
The first constant expresses the fact that the flow proceeds adiabatically,
i.e. the flow is isentropic.
The second and third constants are a consequence of mass and energy
conservation, respectively.
In order to construct a well-balanced scheme, one requires the usual three
components: (i) a local equilibrium profile $\bW_{eq,i}(x)$ within each cell
$I_{i}$, (ii) a well-balanced equilibrium preserving reconstruction and (iii) a
well-balanced source term discretization.

For clarity of presentation, we begin with a detailed description of the
well-balanced equilibrium preserving reconstruction in \cref{sec:wb_rc}
followed by the well-balanced source term discretization in
\cref{sec:wb_src_cart}.
In both sections, we assume that the local equilibrium profile fulfilling
\cref{eq:nm_1d_wb_0010} in each cell is given by
\begin{equation}
\label{eq:nm_1d_wb_0020}
  \bW_{eq,i}(x) = \begin{bmatrix}
                    \rho_{eq,i}(x) \\
                    v_{eq,i}(x)    \\
                    p_{eq,i}(x)
                  \end{bmatrix}
  .
\end{equation}
The constants in \cref{eq:nm_1d_wb_0010} are fixed at the cell center, meaning
that the local equilibrium profile is anchored at the cell center, i.e.\
\begin{equation}
\label{eq:nm_1d_wb_0030}
  \bW_{eq,i}(x_{i}) = \bW_{i}
  .
\end{equation}
The determination of the local equilibrium profile is subsequently presented in
great detail in \cref{sec:local_equilibrium}.

\subsubsection{Well-balanced reconstruction}\label{sec:wb_rc}
In the following, we present the necessary modifications to the standard
reconstruction procedure $\mathcal{R}$ in \cref{sec:standard_fvm}.
This will result in a well-balanced equilibrium preserving reconstruction
procedure we shall denote by $\mathcal{W}$.

Given the local equilibrium profile, the modification of the first-order
accurate reconstruction \cref{eq:nm_1d_fv_0060} is simply the replacement of
the piecewise constant representation by the local equilibrium profile
\begin{equation}
\label{eq:wb_rc_0010}
  \bW_{i}(x) = \mathcal{W}(x; \left\{ \bW_{i} \right\})
             = \bW_{eq,i}(x)
  .
\end{equation}

For the second-order accurate reconstruction \cref{eq:nm_1d_fv_0061}, the
well-balanced equilibrium reconstruction is decomposed into two additive terms,
one for the equilibrium and another for a (possibly large) perturbation therefrom.
The equilibrium term is simply given by the local equilibrium profile
$\bW_{eq,i}(x)$.
The equilibrium perturbation reconstruction is obtained by applying a standard
piecewise linear reconstruction \cref{eq:nm_1d_fv_0061} to the equilibrium
perturbation.
The data for this reconstruction is obtained by extrapolating the local
equilibrium profile $\bW_{eq,i}(x)$ of the $i$-th cell to the neighboring
cells $I_{i-1}$ and $I_{i+1}$:
\begin{equation}
\label{eq:wb_rc_0020}
  \delta \bW_{i-1} = \bW_{i-1} - \bW_{eq,i}(x_{i-1})
  \quad \text{and} \quad
  \delta \bW_{i+1} = \bW_{i+1} - \bW_{eq,i}(x_{i+1})
  .
\end{equation}
Note that $\delta \bW_{i} = \bW_{i} - \bW_{eq,i}(x_{i}) = 0$ holds by
construction, since the equilibrium profile within cell $I_{i}$ is anchored at
cell center \cref{eq:nm_1d_wb_0030}.
Thereby, we obtain
\begin{equation}
\label{eq:wb_rc_0030}
  \bW_{i}(x) = \mathcal{W}(x; \left\{ \bW_{i-1},\bW_{i},\bW_{i+1} \right\})
             = \bW_{eq,i}(x)
             + \mathcal{R}(x;
                           \left\{ \delta\bW_{i-1},0,\delta \bW_{i+1} \right\})
  .
\end{equation}
Moreover, it is clear that this reconstruction will preserve any equilibrium by
construction, because $\delta\bW_{i-1}$ and $\delta \bW_{i+1}$ both vanish
under this condition. Finally, we introduce the notation
\begin{align}
  \delta \bW_i(x) = \cW_i(x) - \bW_{eq,i}(x) = \cR(x; \{\delta \bW_{i-1}, 0, \delta \bW_{i+1}\})
\end{align}
and point out that $\delta \bW_i(x_i) = 0$ for any choice of $\delta \bW_{i-1}$ and
$\delta \bW_{i+1}$.

In astrophysically relevant simulations, some additional clipping of density
and pressure may be required. We propose clipping the density as follows  
\begin{align}
  \bar{\rho}_i(x_{i+1/2}) = \max(\check{\rho}, \min{\rho_{i}(x_{i+1/2}), \hat{\rho})}.
\end{align}
with $\check{\rho} = \min(\rho_i, \rho_{i+1})$ and $\hat{\rho} = \max(\rho_{i},
\rho_{i+1})$ and to proceed analogously for the pressure. The velocity can
remain unmodified. Note that if $(\phi(x_i), \phi(x_{i+1/2}), \phi(x_{i+1}))$ is
a monotone sequence, then $(\rho_{eq}(x_i), \rho_{eq}(x_{i+1/2}),
\rho_{eq}(x_{i+1}))$ and analogously the equilibrium pressure is also monotone.
Therefore, the clipping does not affect the well-balanced property of the
overall scheme.

\begin{remark}
  Unfortunately, it is not always possible to find a $\bW_{eq,i}$ which
  satisfies $\bW_{eq,i}(x_i) = \bW_i$. Whenever this happens that cell will
  default to the standard reconstruction and source term discretization. Note
  that if the initial conditions are the point values of an equilibrium, then
  clearly $\bW_{eq,i}$ exists. Therefore, this does not affect the well-balanced
  property.
\end{remark}

\subsubsection{Well-balanced source term discretization}
\label{sec:wb_src_cart}
Next, we detail the necessary modifications to the source term discretization.
The idea is to decompose the source term into an equilibrium and a perturbation
part followed by an appropriate integration to obtain the source term cell
average.
As a result, the equilibrium part can then readily be written in a flux
difference form, guaranteeing the exact balance at the equilibrium.

For the momentum source term, we obtain the following decomposition in cell
$I_{i}$
\begin{align}\label{eq:wb_src_00}
  s^{(\rho v)}(\bW_{i}(x)) = - \rho_{i}(x) \qpd[x]{} \phi(x)
                           = - \left(\rho_{eq}(x) + \delta \rho_{i}(x)\right)
                               \qpd[x]{} \phi(x)
                           = - \rho_{eq,i}(x) \qpd[x]{} \phi(x)
                             - \delta \rho_i(x) \qpd[x]{} \phi(x)
  .
\end{align}
Direct numerical integration of the above will not result in a well-balanced
scheme.
Instead, we use the fact that we have the following correspondence for the
equilibrium part
\begin{equation}
\label{eq:wb_src_01}
  \qpd[x]{}{f^{(\rho v)}(\bW_{eq,i}(x))} = - \rho_{eq,i}(x) \qpd[x]{} \phi(x)
\end{equation}
by construction.
Hence, the equilibrium part of the source term can be trivially integrated.
Subsequently, we apply the second-order accurate midpoint rule to the
perturbation part to obtain the following expression for the cell-averaged
momentum source term
\begin{align}\label{eq:wb_src_02}
  S_i^{(\rho v)} = \left. 
                     \frac{1}{\Delta x} f^{(\rho v)}(\bW_{eq,i}(x))
                   \; \right|_{x_{i-1/2}}^{x_{i+1/2}}
                 - \delta \rho_i(x_i) \qpd[x]{} \phi(x_i)
                 = \left. 
                     \frac{1}{\Delta x} f^{(\rho v)}(\bW_{eq,i}(x))
                   \; \right|_{x_{i-1/2}}^{x_{i+1/2}}
  .
\end{align}
In the last equality, we used the fact that the perturbation $\delta \rho_i(x)$
vanishes at the cell center, as described in \cref{sec:wb_rc}.

Along the exact same lines, one obtains the following well-balanced
second-order accurate discretization of the energy equation source term
\begin{equation}
\label{eq:wb_src_03}
  S_i^{(E)} = \left. 
                \frac{1}{\Delta x} f^{(E)}(\bW_{eq,i}(x))
              \; \right|_{x_{i-1/2}}^{x_{i+1/2}}
  .
\end{equation}

By combining the above, we obtain the following well-balanced second-order
discretization of the cell-averaged source term
\begin{equation}
\label{eq:wb_src_04}
  \bS_{i} = \begin{bmatrix}
              0              \\
              S_i^{(\rho v)} \\
              S_i^{(E)}
            \end{bmatrix}
  .
\end{equation}

\subsection{Local equilibrium determination}
\label{sec:local_equilibrium}
Finally, we detail the remaining component: the determination of the local
equilibrium profile \cref{eq:nm_1d_wb_0020}.
The latter fulfills the steady adiabatic flow \cref{eq:nm_1d_wb_0010}
\begin{equation}
  \label{eq:nm_1d_localeq_0020}
  \begin{aligned}
    s_{eq,i}(x) &= s_{i}                                              \\
    \rho_{eq,i}(x) ~ v_{eq,i}(x) & = \rho_{i} v_{i} = m_{i}           \\
    \frac{v^{2}_{eq,i}(x)}{2} + h(p_{eq,i}(x), s_{eq,i}(x)) + \phi(x)
    & = \frac{v^{2}_{i}}{2} + h_{i} + \phi(x_{i}) = \Be_{i}
    ,
  \end{aligned}
\end{equation}
where the equilibrium mass flux $m_{i}$, Bernoulli constant $\Be_{i}$ and
specific entropy $s_{i}$ are fixed by their values at cell center $x_{i}$.
This formal definition is highly implicit in nature and it is not obvious if
such an equilibrium is unique or exists at all. Therefore, we first discuss
existence and uniqueness.

We note that a form of the Bernoulli equation also appears in so-called moving
steady states of the shallow water equations.
There, similar issues arise and we refer to , e.g., Noelle et al.
\cite{NoelleEtAl2007} for a thorough discussion.

The above equations can be combined into a single equation for the equilibrium
density reconstruction $\rho_{eq,i}(x)$ as
\begin{equation}
  \label{eq:nm_1d_localeq_0030}
    \frac{m^{2}_{i}}{2 \rho^{2}_{eq,i}(x)}
  + h(p(\rho_{eq,i}(x), s_{i}), s_{i})
  + \phi(x)
  = \Be_{i}
  .
\end{equation}
If a suitable $\rho_{eq,i}(x)$ is found, the equilibrium velocity and pressure
are simply given by
\begin{equation}
  \label{eq:nm_1d_localeq_0040}
  v_{eq,i}(x) = \frac{m_{i}}{\rho_{eq,i}(x)}
  \quad \text{and} \quad
  p_{eq,i}(x) = p(\rho_{eq,i}(x), s_{i})
  .
\end{equation}

In order to simplify the notation, let us rewrite
\cref{eq:nm_1d_localeq_0030} as
\begin{equation}
  \label{eq:nm_1d_localeq_0050}
  \frac{m^{2}_{0}}{2 \rho^2} + h(p(\rho,s_{0}),s_{0}) + \phi = \Be_{0}
  ,
\end{equation}
where we have suppressed any references to the spatial dependence as well as
the cell under consideration, i.e. $m_{0}$, $\Be_{0}$ and $s_{0}$ denote
some generic mass flux, Bernoulli and entropy constants at a location $x_{0}$,
respectively.
The above may be separated into specific fluid and gravitational energy parts
by introducing
\begin{equation}
  \label{eq:nm_1d_localeq_0051}
  e(\rho) = \frac{m^{2}_{0}}{2 \rho^{2}} + h(\rho)
  ,
\end{equation}
which represents the fluid part (we also suppressed the dependence of the
enthalpy on the entropy).
Then the task to find an equilibrium density $\rho = \rho(x)$ at location $x$
is as follows: Given the equilibrium constants $m_{0}$, $\Be_{0}$,
$s_{0}$, and a gravitational potential value
$\phi = \phi(x)$, determine a suitable solution $\rho$ of
\begin{equation}
  \label{eq:nm_1d_localeq_0060}
  e(\rho) = \Be_{0} - \phi
  ,
\end{equation}
if it exists at all.
With help of the following fundamental thermodynamic relation for the specific
enthalpy
\begin{equation}
  \label{eq:nm_1d_localeq_0070}
  \ud h = T \ud s + \frac{\ud p}{\rho}
\end{equation}
we may express the derivative of $e(\rho)$ as
\begin{equation}
  \label{eq:nm_1d_localeq_0080}
  e'(\rho) = \frac{c^{2}(\rho)}{\rho} - \frac{m^{2}_{0}}{\rho^{3}}
  ,
\end{equation}
where the definition of the sound speed
$c^{2} = (\partial p/\partial \rho)_{s}$ was used.

Assuming that the \eos is convex \cite{MenikoffPlohr1989}, i.e.
$(\partial^2 p/\partial \rho^2)_{s} > 0$, we conclude from
\cref{eq:nm_1d_localeq_0080} that $e(\rho)$ has a unique minimum
$e_{\ast} = e(\rho_{\ast})$ at $\rho_{\ast}$ where
$v(\rho_{\ast}) = v_{\ast} = c_{\ast}$.
It corresponds to the density where the fluid velocity is equal to the sound
velocity for the given equilibrium constants.
This density/velocity is also called the critical density/velocity
\cite{LandauLifshitz1987}.

Therefore, we are now in position to answer the question of existence of an
equilibrium at a certain location $x$, i.e. a value of the gravitational
potential $\phi = \phi(x)$, based on the equilibrium constants as follows:
\begin{equation}
  \label{eq:nm_1d_localeq_0090}
  \Be_{0} - \phi ~ \begin{cases}
                          < e_{\ast} & \text{no  equilibrium,} \\
                          = e_{\ast} & \text{one equilibrium,} \\
                          > e_{\ast} & \text{two equilibria.}
                        \end{cases}
\end{equation}
In the case of two possible equilibria, there is one subsonic
($\rho > \rho_{\ast}$) and one supersonic ($\rho < \rho_{\ast}$) equilibrium.
The situation is sketched in Fig. \ref{fig:Be}

\begin{figure}[hbt]
  \begin{center}
    \includegraphics[width=0.5\textwidth]{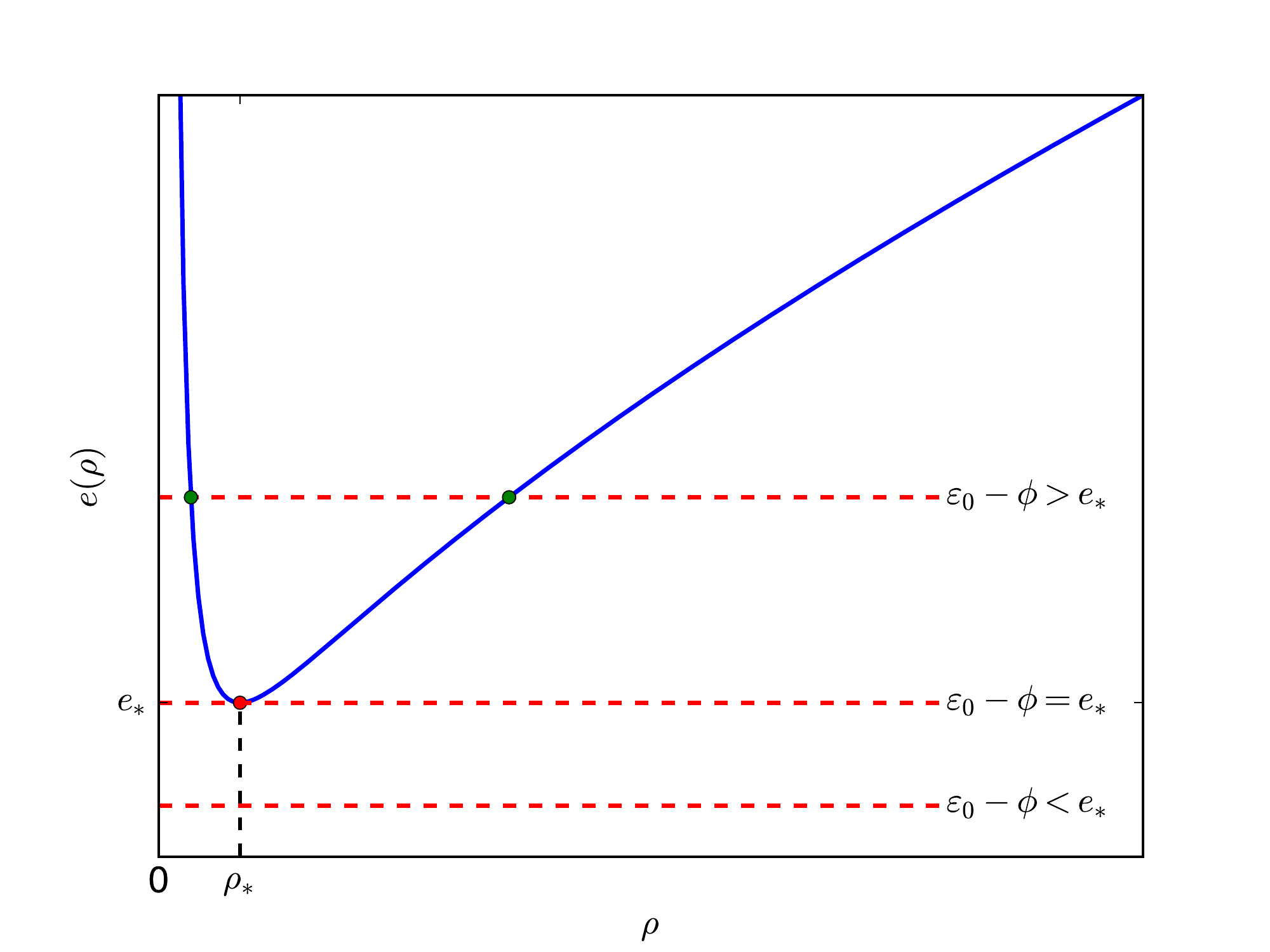}
  \end{center}
  \caption{Sketch of the function $e(\rho)$ (\cref{eq:nm_1d_localeq_0060}).
           Also shown are the possibilities of no, one (red dot) and two
           solutions (green dots).
           In the case of two solutions, the left (right) solution with
           $\rho < \rho_{\ast}$ ($\rho > \rho_{\ast}$) is supersonic
           (subsonic).}
  \label{fig:Be}
\end{figure}

So far, the discussion is applicable to any convex \eos.
For ease of presentation, we assume an ideal gas law equation of state
\begin{equation}
  p = (\gamma - 1) \rho e,
\end{equation}
where $\gamma$ is the ratio of specific heats.
In that case, the equilibrium determination is simplified because $e_{\ast}$
can be computed explicitly.
However, we stress that our well-balanced method is not restricted to this
particular \eos and numerical experiments with a general convex \eos are shown
in \cref{subsec:numex_stella}.

Thereby, we write the ideal gas law in the form
\begin{equation}
  \label{eq:nm_1d_localeq_0100}
  p = p(\rho, K) = K \rho^{\gamma}
  ,
\end{equation}
where $K$ is a function of entropy alone, i.e. $K = K(s)$.
Then we have $K_{0} = p_{0}/\rho_{0}^{\gamma}$ and consequently the critical
values can be computed explicitly as
\begin{equation}
  \label{eq:nm_1d_localeq_0101}
  \rho_{\ast} = \left(
                  \frac{m^{2}_{0}}{\gamma K_{0}}
                \right)^\frac{1}{\gamma + 1}, \;
  p_{\ast} = K_{0} \rho_{\ast}^{\gamma}, \;
  v^{2}_{\ast} = c^{2}_{\ast} = \frac{\gamma p_{\ast}}{\rho_{\ast}}, \;
  h_{\ast} = \frac{\gamma}{\gamma - 1}\frac{p_{\ast}}{\rho_{\ast}}
           = \frac{c^{2}_{\ast}}{\gamma - 1}
  \quad \text{and} \quad
  e_{\ast} = \frac{m^{2}_{0}}{2 \rho_{\ast}} + h_{\ast}
  .
\end{equation}
In case two equilibrium solutions exist, we chose the solution with
sub/super-sonic velocity if the equilibrium constants correspond to a
sub/super-sonic state.
In practice, the equilibrium is found by a hybrid Newton method combining
the quadratic convergence of the Newton method with a form of the robust
bisection method, see e.g. \cite{DennisSchnabel1996}.
The detailed algorithm is outlined in \cref{algo:equilibrium_ideal}.

\begin{algorithm}[hbt]
  Initial guess $\rho^{(0)} = \rho_{0}$\;
  \For{k = 0, 1, 2, ...}{
    \lIf{$\abs{e(\rho^{(k)}) + \phi - \Be_{0}} < tol ~ e(\rho_0)$}{Stop}
    $\rho^{(trial)} = \rho^{(k)}
                    - \frac{e(\rho^{(k)}) + \phi - \Be_{0}}
                           {e'(\rho^{(k)})}$\;

    \lIf{$v_{0} < c_{0}$ \text{\bf and } $\rho^{(trial)} < \rho_{\ast}$} {
        $\rho^{(trial)} = \frac{1}{2}(\rho_{\ast} + \rho^{(k)})$
    }
    \lIf{$v_{0} > c_{0}$ \text{\bf and } $\rho^{(trial)} > \rho_{\ast}$ }{
        $\rho^{(trial)} = \frac{1}{2}(\rho^{(k)} + \rho_{\ast})$
    }
    \lIf{$\rho^{(trial)} < 0$} {
      $\rho^{(trial)} = \frac{1}{2}\rho_{(k)}$
    }
    $\rho^{(k+1)} = \rho^{(trial)}$\;
  }
  \caption{Local equilibrium determination}
  \label{algo:equilibrium_ideal}
\end{algorithm}

The case of a general convex \eos is treated in the appendix, see
\cref{algo:equilibrium_general}. This concludes
the elaboration of the well-balanced scheme for one-dimensional steady adiabatic
flow and we summarize it in the following:
\begin{theorem}
\label{thm:nm_1d}
Consider the scheme \eqref{eq:nm_1d_fv_0010} with a consistent and Lipschitz
continuous numerical flux $\mathcal{F}$, the spatially first/second-order
reconstruction $\mathcal{W}$ \eqref{eq:wb_rc_0010}/\eqref{eq:wb_rc_0030} and
source term discretization \eqref{eq:wb_src_04}.

The scheme has the following properties:
\begin{enumerate}[(i)]
\item The scheme is consistent with \eqref{eq:balance_short} and it is
      formally first/second-order accurate in space (for smooth solutions).
\item The scheme is well-balanced and preserves a discrete steady adiabatic
      flow given by \eqref{eq:nm_1d_wb_0010} exactly.
\end{enumerate}
\end{theorem}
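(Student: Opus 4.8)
The plan is to treat the two claims separately and to recognise that claim~(ii), the well-balanced property, is the heart of the matter, while claim~(i) is essentially a consistency and accuracy bookkeeping exercise resting on the additive structure of the reconstruction \eqref{eq:wb_rc_0030} and the flux-difference form of the source term \eqref{eq:wb_src_04}. Throughout I would exploit the single structural identity \eqref{eq:wb_src_01}, namely that the momentum equilibrium flux is an exact antiderivative of the equilibrium momentum source.

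For the well-balanced property I would start from the assumption that the cell data $\{\bU_i\}$ are the point values at cell centers of a steady adiabatic flow satisfying \eqref{eq:nm_1d_wb_0010}. The essential observation is that in one dimension the mass flux $m=\rho v$, the entropy $s$, and the Bernoulli constant $\Be = v^2/2 + h + \phi$ are \emph{globally} constant along the flow. Hence the equilibrium constants $(m_i, s_i, \Be_i)$ fixed at every cell center $x_i$ coincide, and the local equilibrium profile $\bW_{eq,i}(x)$ solving \eqref{eq:nm_1d_localeq_0020} is, in each cell, one and the same global profile $\bW_{eq}(x)$; here the branch selection in \cref{algo:equilibrium_ideal} must pick the branch matching the cell-center state, which is the sole place where sub/supersonic consistency enters. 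Consequently the extrapolated perturbations vanish, $\delta\bW_{i\pm1} = \bW_{i\pm1} - \bW_{eq,i}(x_{i\pm1}) = \bW_{eq}(x_{i\pm1}) - \bW_{eq}(x_{i\pm1}) = 0$, so by \eqref{eq:wb_rc_0030} the reconstruction collapses to $\cW_i(x) = \bW_{eq}(x)$ in every cell (for the first-order variant \eqref{eq:wb_rc_0010} this is immediate without any perturbation).

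From here the balance is immediate. Since $\cW_i$ and $\cW_{i+1}$ agree with the same profile $\bW_{eq}$, the interface traces coincide, $\bW_{i+1/2-} = \bW_{i+1/2+} = \bW_{eq}(x_{i+1/2})$, and by consistency of $\cF$ the numerical flux equals the physical flux $\bbf(\bW_{eq}(x_{i+1/2}))$. For the mass component $f^{(\rho)}=m$ is constant, so its flux difference and (vanishing) source both give zero. For the momentum and energy components I would use that the source discretization \eqref{eq:wb_src_02}--\eqref{eq:wb_src_03} reduces at equilibrium to the telescoping flux differences $\Delta x^{-1} f^{(\rho v)}(\bW_{eq}(x))\big|_{x_{i-1/2}}^{x_{i+1/2}}$ and $\Delta x^{-1} f^{(E)}(\bW_{eq}(x))\big|_{x_{i-1/2}}^{x_{i+1/2}}$ --- the midpoint perturbation correction drops because $\delta\rho_i(x_i)=0$ --- which cancel $\Delta x^{-1}(\bF_{i+1/2}-\bF_{i-1/2})$ termwise. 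Thus $\cL(\bU)=0$, and the state is stationary under \eqref{eq:nm_1d_fv_0010} as well as under the SSP-RK integrators, since the latter are convex combinations of forward-Euler steps; this proves~(ii). For claim~(i) I would first note consistency: the reconstruction obeys $\cW_i(x_i) = \bW_{eq,i}(x_i) + \delta\bW_i(x_i) = \bW_i$, the flux is consistent by hypothesis, and the source \eqref{eq:wb_src_04} reduces to the physical cell average as $\Delta x \to 0$. For the order of accuracy I would exploit the additive splitting: writing a smooth solution as $\bW(x) = \bW_{eq,i}(x) + (\bW(x) - \bW_{eq,i}(x))$, the profile $\bW_{eq,i}$ is smooth (the relation \eqref{eq:nm_1d_localeq_0060} is smoothly solvable away from the sonic point, since by convexity of the \eos the derivative \eqref{eq:nm_1d_localeq_0080} satisfies $e'(\rho)\neq0$ there), so the perturbation is smooth and the standard reconstruction $\cR$ of its samples $\delta\bW_{i\pm1}$ recovers it to first/second order; adding back $\bW_{eq,i}$ reconstructs $\bW$ to the same order. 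The source discretization is likewise second order: its equilibrium part is an \emph{exact} flux difference by \eqref{eq:wb_src_01} and the fundamental theorem of calculus, while the perturbation part uses the second-order midpoint rule.

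The main obstacle is not any single computation but the interplay at the sonic point: the existence, uniqueness, smoothness, and branch selection of $\bW_{eq,i}(x)$ all hinge on staying away from $\rho = \rho_\ast$, where $e'(\rho)=0$ and the two equilibrium branches merge (cf.\ \eqref{eq:nm_1d_localeq_0090}). I would therefore state the accuracy claim for smooth solutions bounded away from sonic transitions and rely on the fallback to the standard scheme described in the Remark whenever a local equilibrium fails to exist, observing that this fallback cannot spoil well-balancing because a genuine equilibrium always admits the profile $\bW_{eq,i}$ by construction.
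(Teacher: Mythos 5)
Your proposal is correct and follows essentially the same route as the paper's proof: show that the well-balanced reconstruction returns identical equilibrium traces at each interface, invoke consistency of the numerical flux to replace $\bF_{i\pm 1/2}$ by $\bbf(\bW_{i\pm 1/2})$, and observe that the source term discretization \eqref{eq:wb_src_04} telescopes exactly against the flux differences so that $\cL(\bU)=0$. Your write-up merely fills in details the paper leaves implicit --- the global constancy of $(m,s,\Be)$ forcing all local profiles to coincide, the role of branch selection, the vanishing mass-flux difference, and the smoothness/sonic-point caveats behind the ``straightforward'' accuracy claim in part~(i) --- which strengthens rather than changes the argument.
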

\begin{proof} %
(i) The consistency and formal order of accuracy of the scheme is
straightforward.

(ii) Let data $\bW_{i} = [\rho_{i}, v_{i}, p_{i}]^T$ in steady adiabatic flow
state \eqref{eq:nm_1d_wb_0010} be given.
Then both first- and second-order accurate reconstructions $\mathcal{W}$
\eqref{eq:wb_rc_0010}/\eqref{eq:wb_rc_0030} will yield the same equilibrium
fulfilling state for the left and right cell interface traces
\begin{equation*}
  \bW_{i+1/2-} = \bW_{i+1/2+} = \bW_{i+1/2}
  .
\end{equation*}
Plugging this into a consistent numerical flux gives
\begin{equation*}
  \bF_{i+1/2} = \mathcal{F}(\bW_{i+1/2},\bW_{i+1/2})
              = \bbf(\bW_{i+1/2})
  .
\end{equation*}
Similarly, we may evaluate the source term by \cref{eq:wb_src_04}
\begin{equation*}
  \bS_{i} = \frac{1}{\Delta x}
            \begin{bmatrix}
              0                                                           \\
              f^{(\rho v)}(\bW_{i+1/2}) - f^{(\rho v)}(\bW_{i-1/2}) \\
              f^{(E)}(\bW_{i+1/2}) - f^{(E)}(\bW_{i-1/2})
            \end{bmatrix}
  .
\end{equation*}
By combining both above expressions in \eqref{eq:nm_1d_fv_0010}, we immediately
obtain
\begin{equation*}
  \fd[t]{}\bU_i = \mathcal{L}(\bU_{i})
                = - \frac{1}{\Delta x}
                    \left( \bF_{i+1/2} - \bF_{i-1/2}\right)
                  + \bS_{i}
                = 0
  .
\end{equation*}
This shows the well-balanced property of the scheme.
\end{proof}

\subsection{Extension to cylindrical and spherical symmetry}
\label{subsec:nm_cylsph}
The Euler equations with gravity in cylindrical and spherical symmetry can be
written in the following compact form
\begin{align} \label{eq:euler_compact_sphe}
    \qpd[t]{}\bu + r^{-\alpha}\ \qpd[r]{}(r^\alpha\ \bbf) = \bbs_{geo} + \bbs_{gra} = \bbs
\end{align}
where the conserved variables $\bu$, the fluxes $\bbf$ and the gravity source
term $\bbs_{gra}$ are as in \cref{eq:euler_short}.
The radial coordinate is denoted by $r$ and $\alpha$ specifies whether the
symmetry is cylindrical ($\alpha = 1$) or spherical ($\alpha = 2$).
The geometric source term reads
\begin{equation}
\label{eq:nm_cylsph_0010}
  \bbs_{geo} = \begin{bmatrix}
                 0                       \\
                 \alpha r^{\alpha - 1} p \\
                 0
               \end{bmatrix}
  .
\end{equation}
In these geometries, steady adiabatic flow is governed by
\begin{align}
  \label{eq:eq_sphe}
  s = \const, \quad
  r^{\alpha} \rho v = \const
  \quad \text{and} \quad
  \frac{v^2}{2} + h + \phi = \const.
\end{align}

\subsubsection{Standard finite-volume discretization}
\label{sec:standard_fvm_sphe}
A semi-discrete finite volume method for \cref{eq:euler_compact_sphe} is given
by
\begin{align}
\label{eq:standard_fvm_sphe_0010}
  \fd[t]{}\bU_i =   \mathcal{L}(\bU)
                = - \frac{1}{\abs{V_{i}}}
                    \left(  A_{i+1/2} ~ \bF_{i+1/2}
                          - A_{i-1/2}\bF_{i-1/2}
                    \right)
                  + \bS_{geo,i}
                  + \bS_{gra,i}
  .
\end{align}
where $V_i = [r_{i-\half}, r_{i+\half}]$ denotes the $i$-th cell ranging over
the left/right cell interface $r_{i\pm1/2} = r_{i} \pm \Delta r/2$ with cell
center $r_{i}$ and cell size $\Delta r$.
Explicit expressions for the cell volume $\abs{V_{i}}$ and interface areas
$A_{i\pm1/2} = A(r_{i\pm 1/2})$ are given by
\begin{equation}
\label{eq:standard_fvm_sphe_0020}
  \abs{V_{i}} = 2 \pi \left( r_{i+1/2}^{2} - r_{i-1/2}^{2} \right)
  , \quad
  A(r) = 2 \pi r
\end{equation}
for cylindrical symmetry and
\begin{equation}
\label{eq:standard_fvm_sphe_0030}
  \abs{V_{i}} = \frac{4 \pi}{3} \left( r_{i+1/2}^{3} - r_{i-1/2}^{3} \right)
  , \quad
  A(r) = 4 \pi r^{2}
\end{equation}
for spherical symmetry.

For the numerical flux, reconstruction and gravity source term discretization,
the same standard components as in the Cartesian case \cref{sec:standard_fvm}
can be employed.
However, we note that specialized reconstruction procedures for curvilinear
coordinates have been designed in the literature (see \cite{Mignone2014}
and references therein).

The momentum component of the geometric source term $\bbs_{geo}$ can be
discretized as
\begin{equation}
\label{eq:standard_fvm_sphe_0040}
  S^{(\rho v)}_{geo,i} = \frac{1}{\abs{V_{i}}}
                         \left( A_{i+1/2} - A_{i-1/2}\right) p_{i}
  ,
\end{equation}
where $p_{i}$ is the pressure at cell center, or more precisely, computed
simply from the cell-averaged conserved variables $\bW_{i} = \bw(\bU_{i})$.
Note that this discretization has the desirable property that resting uniform
conditions ($\rho = \const$, $p = \const$ and $v = 0$) are exactly preserved.

In general, the just outlined standard finite volume scheme for
cylindrical/spherical symmetry has difficulties in resolving the steady
adiabatic equilibrium \cref{eq:eq_sphe}.
Next, we describe the necessary modifications enabling the scheme to exactly
preserve such steady states, thereby extending the approach of Cartesian
geometry from \cref{subsec:nm_1d_wb}.

\subsubsection{Well-balanced finite-volume discretization}
\label{subsubsec:nm_cylsph_wb}

We follow the structure in the section of well-balanced finite volume
discretization in Cartesian coordinates and will first describe the finite volume
method in terms of an abstract equilibrium reconstruction, i.e. for each cell
let $\bW_{eq,i} = (\rho_{eq,i}, v_{eq,i}, p_{eq,i})$ be a stationary solution of
the Euler equation in cylindrical and spherical symmetry satisfying
\cref{eq:eq_sphe} such that $\bW_{eq,i} = \bW_i$. In a second step we will
describe how to evaluate $\bW_{eq,i}$.

\paragraph{Reconstruction}
The well-balanced reconstruction \cref{sec:wb_rc}, based on equilibrium profiles which satisfy
\cref{eq:eq_sphe} can also be used in cylindrical and spherical coordinates.

\paragraph{Momentum source term}
The derivation of the momentum source term in cylindrical and spherical coordinates closely
follows \cref{sec:wb_src_cart}. However, in curvilinear coordinates we need to additionally consider
the geometric source term, i.e.\
\begin{align} \label{eq:nm_cyl_sphe_0100}
  \frac{1}{r^\alpha}\qpd[r]{} r^\alpha \bbf^{(\rho v)}(\bW_{eq,i})
  = \bbs_{geo}(\bW_{eq,i}) + \bbs_{gra}(\bW_{eq,i})
  = \alpha r^{\alpha-1}p_{eq,i} - \rho_{eq,i} \qpd[r]{}{\phi}.
\end{align}
In a first step, we split the pressure and density into an
equilibrium and perturbation term as follows
\begin{align}
  \bbs_{geo}^{(\rho v)}(r, \bW_i(r)) + s_{gra}^{(\rho v)}(r, \bW_i(r))
  = \alpha r^{\alpha -1} (p_{eq,i}(r) + \delta p_i(r)) - (\rho_{eq,i}(r) + \delta \rho_i(r))\ \qpd[r]{} \phi.
\end{align}

By the same steps as in the Cartesian case, the well-balanced cell-averaged
source term in cell $\scell_i$ is determined to be
\begin{align}\label{eq:wb_src_30}
  S_i^{(\rho v)}
  &= \frac{1}{\abs{\scell_i}}
    \left.
    A(r)\left(\rho v^2_{eq,i}(r) + p_{eq,i}(r)\right)
    \right|^{r_{i+\half}}_{r_{i-\half}}.
\end{align}

\paragraph{Energy source term discretization}
The derivation of the energy source term also closely follows its Cartesian
counter part. Following those steps, we derive that up to second order
\begin{align}
  s^{(E)}(r, \bW_i(r)) &= \frac{1}{r^\alpha} \qpd[r]{}(r^\alpha f^{(E)}(\bW_{eq,i})).
\end{align}
The well-balanced energy source term is obtained in a similar fashion as the
momentum source term and reads
\begin{align} \label{eq:wb_src_31}
  S_i^{(E)}
  = \frac{1}{\abs{\scell_i}}\left.A(r) f^{(E)}(\bW_{eq,i})\right|_{r_{i-\half}}^{r_{i+\half}}.
\end{align}

\subsubsection{Local equilibrium reconstruction in cylindrical/spherical
               symmetry}
\label{sec:local_equilibrium_sphe}
In analogy to the equilibrium reconstruction in Cartesian coordinates, we
formally define an equilibrium profile $\bW_{eq,i}(r)$ which satisfies the equations
of steady adiabatic flow in cylindrical and spherical coordinates.
The only difference is the mass flux which accounts for the geometry
\begin{align}
  r^\alpha \rho_{eq,i}(r) v_{eq,i}(r) = r^\alpha_i \rho_i v_i = m_i
  ,
\end{align}
where we again fix the constant by their values at cell center $r_{i}$.
Following precisely the steps outlined in \cref{sec:local_equilibrium},
we obtain one scalar equation for the equilibrium density, namely
\begin{equation} \label{eq:eq_sphe_01}
  \frac{m_i^{2}}{2 r^{2\alpha} \rho^2_{eq,i}(r)} + h(p_{eq,i}(r), s_{eq,i}(r)) + \phi(r) = \Be_{i}.
\end{equation}
Let us again simplify notation by rewriting \cref{eq:eq_sphe_01} as
\begin{align}\label{eq:eq_sphe_02}
  \frac{m_0^{2}}{2 r^{2\alpha} \rho^2} + h(p(\rho, s_0), s_0) + \phi(r) = \Be_{0}.
\end{align}
where $m_0$, $s_0$ and $\Be_{0}$ denote the values of $m$, $s$ and
$\Be$ at a reference point $r_0$.
The corresponding thermodynamic part and its derivative are
\begin{equation}
  \label{eq:eq_sphe_03}
\begin{aligned}
  e(\rho, r) = \frac{m_0^2}{2 r^{2\alpha} \rho^2} + h(\rho), \quad
  e^\prime(\rho, r) = -\frac{m_0^2}{2 r^{2\alpha} \rho^3} + \frac{c^2(\rho)}{\rho}.
\end{aligned}
\end{equation}
Both now depend on the spatial coordinate $r$. Let $\rho_\ast$ be the critical
density such that $e^\prime(\rho, r) = 0$. Clearly, both the $\rho_\ast$ and
$e_\ast$ are functions of $r$. Keeping this fact in mind, we determine the
number of solutions to \cref{eq:eq_sphe_01} as in the Cartesian case.

The same hybrid Newton's method proposed for the Cartesian case
(\cref{algo:equilibrium_ideal}), can be used to solve \cref{eq:eq_sphe_03}.
However, since $\rho_\ast$ now depends on $r$, it could happen that the
reference state is on the supersonic branch $\rho_0 < \rho_\ast(r_0)$, but
the initial guess given to the algorithm $\rho^{(0)} = \rho_0$ is on the
subsonic branch $\rho_0 > \rho_\ast(r)$, or vice versa.
Therefore, we choose the initial guess as follows
\begin{align}
  \rho^{(0)} = \frac{\rho_0}{\rho_\ast(r_0)} \rho_{\ast}(r)
  ,
\end{align}
which ensures that the initial guess is in the same sub-/super-sonic regime as
the reference state.

This concludes the elaboration of the well-balanced scheme for steady adiabatic
flow in cylindrical and spherical coordinates.
We summarize the schemes' properties in the following:
\begin{corollary}
\label{cor:nm_cylsph}
Consider the scheme \eqref{eq:standard_fvm_sphe_0010} with a consistent and
Lipschitz continuous numerical flux function $\mathcal{F}$, the spatially
first/second-order accurate reconstruction $\mathcal{W}$
\cref{eq:wb_rc_0010}/\cref{eq:wb_rc_0030} and source term discretization
\cref{eq:wb_src_30} and \cref{eq:wb_src_31}.

The scheme has the following properties:
\begin{enumerate}[(i)]
\item The scheme is consistent with \eqref{eq:euler_compact_sphe} and it is
      formally first/second-order in space (for smooth solutions).
\item The scheme is well-balanced and preserves a discrete steady adiabatic
      flow given by \eqref{eq:eq_sphe} exactly.
\end{enumerate}
\end{corollary}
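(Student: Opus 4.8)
The plan is to mirror the proof of \cref{thm:nm_1d}, since the well-balanced machinery in curvilinear geometry is structurally identical to the Cartesian case; only the area-weighting and the geometric source term require extra care. Part (i) is again immediate: the reconstruction $\mathcal{W}$ reduces to a standard TVD/MUSCL reconstruction away from equilibrium, and the source-term quadratures \cref{eq:wb_src_30} and \cref{eq:wb_src_31} are second-order midpoint-type approximations, so consistency and the formal order of accuracy carry over verbatim. The substance is entirely in part (ii).

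For part (ii), I would start from data $\bW_i$ satisfying the steady adiabatic flow \cref{eq:eq_sphe}. The first step is to argue, exactly as in the Cartesian proof, that both reconstructions \cref{eq:wb_rc_0010}/\cref{eq:wb_rc_0030} return the common equilibrium profile: since the local equilibrium profile of cell $\scell_i$ is anchored at the cell center and passes through the neighbouring cell-centre data, the perturbations $\delta \bW_{i\pm 1}$ of \cref{eq:wb_rc_0020} vanish, so the left and right interface traces coincide, $\bW_{i+1/2-} = \bW_{i+1/2+} = \bW_{i+1/2} = \bW_{eq,i}(r_{i+1/2})$. Feeding this into the consistent numerical flux yields the physical flux $\bF_{i+1/2} = \bbf(\bW_{i+1/2})$.

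The decisive step is the cancellation in \cref{eq:standard_fvm_sphe_0010}. Here I would exploit that the momentum source \cref{eq:wb_src_30}, rewritten through $\rho v^2 + p = f^{(\rho v)}$, and the energy source \cref{eq:wb_src_31} are \emph{already} in area-weighted flux-difference form; at equilibrium they read $S_i^{(\rho v)} = \abs{\scell_i}^{-1}(A_{i+1/2} f^{(\rho v)}(\bW_{i+1/2}) - A_{i-1/2} f^{(\rho v)}(\bW_{i-1/2}))$ and analogously for the energy. These are exactly the area-weighted numerical flux differences subtracted in the scheme, so the momentum and energy components of $\fd[t]{}\bU_i$ vanish. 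For the mass component, whose source is zero, I would note that $A(r) f^{(\rho)}(\bW_{eq,i}(r)) \propto r^\alpha \rho_{eq,i} v_{eq,i} = m_i$ is constant in $r$ by \cref{eq:eq_sphe}, whence $A_{i+1/2} \bF^{(\rho)}_{i+1/2} = A_{i-1/2} \bF^{(\rho)}_{i-1/2}$ and the mass flux balances as well. Combining the three components gives $\fd[t]{}\bU_i = 0$.

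The main obstacle, as anticipated, is purely the bookkeeping of the geometric factors: one must verify that the geometric source term $\bbs_{geo}$ has been correctly absorbed into the flux-difference form rather than left as a separate residual. This is precisely the content of \cref{eq:nm_cyl_sphe_0100}, which states that at equilibrium $r^{-\alpha}\partial_r(r^\alpha f^{(\rho v)}(\bW_{eq,i})) = \bbs_{geo}(\bW_{eq,i}) + \bbs_{gra}(\bW_{eq,i})$; integrating this identity over the cell is what produces the area-weighted difference \cref{eq:wb_src_30} while simultaneously accounting for both the geometric and gravitational contributions. Once this identity is in hand, the cancellation is mechanical and the argument closes exactly as in the Cartesian case.
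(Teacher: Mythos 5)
Your proposal is correct and follows exactly the route the paper intends: its proof of \cref{cor:nm_cylsph} consists of the single remark that the argument parallels that of \cref{thm:nm_1d}, which is precisely what you carry out, with the two geometry-specific points (the absorption of $\bbs_{geo}$ via \cref{eq:nm_cyl_sphe_0100} into the area-weighted flux-difference form, and the mass-flux balance from the constancy of $r^\alpha \rho_{eq,i} v_{eq,i} = m_i$) correctly identified and handled. In fact, your write-up supplies the details the paper leaves implicit, so nothing is missing.
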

\begin{proof}
The proof parallels mostly the one of \cref{thm:nm_1d} and is straightforward.
\end{proof}

\subsection{Extension to several space dimensions}
\label{subsec:nm_md}
We briefly outline the straightforward extension of the above schemes to
several space dimensions.
However, the extension will in general only be truly well-balanced if the
streamlines of the steady adiabatic flow of interest are aligned with a
computational grid axis.

For the sake of simplicity, we treat the two-dimensional Cartesian case
explicitly since the extension to other geometries and three dimensions is
analogous.
The two-dimensional Euler equations with gravity in Cartesian coordinates are
given by
\begin{equation}
  \label{eq:nm_md_0010}
  \qpd[t]{}\bu + \qpd[x]{} \bbf + \qpd[y]{} \bg = \bbs
\end{equation}
with
\begin{equation}
  \label{eq:nm_md_0020}
  \bu = \begin{bmatrix}
          \rho     \\
          \rho v_x \\
          \rho v_y \\
          E
        \end{bmatrix}
  , \quad
  \bbf = \begin{bmatrix}
           \rho v_x       \\
           \rho v_x^2 + p \\
           \rho v_y v_x   \\
           (E + p) v_x
         \end{bmatrix}
  , \quad
  \bg = \begin{bmatrix}
          \rho v_y       \\
          \rho v_x v_y   \\
          \rho v_y^2 + p \\
          (E + p) v_y
        \end{bmatrix}
  \quad \mathrm{and} \quad
  \bbs = \bbs^{x} + \bbs^{y}
       = - \begin{bmatrix}
             0        \\
             \rho     \\
             0        \\
             \rho v_x
           \end{bmatrix} \qpd[x]{} \phi
         - \begin{bmatrix}
             0        \\
             0        \\
             \rho     \\
             \rho v_y
           \end{bmatrix} \qpd[y]{}{\phi}
  ,
\end{equation}
where $\bu$ is the vector of conserved variables, $\bbf$ and $\bg$ the fluxes
in $x$- and $y$-direction, and $\bbs$ the gravitational source terms.
The primitive variables are given by $\bw=[\rho, v_{x}, v_{y}, p]^{T}$.

We consider a rectangular spatial domain
$\Omega = [x_{\min},x_{\max}] \times [y_{\min},y_{\max}]$ discretized
uniformly (for ease of presentation) by $N_{x}$ and $N_{y}$ cells or finite
volumes in $x$- and $y$-direction, respectively.
The cells are labeled by $I_{i,j} = I_{i} \times I_{j}
                                  =        [x_{i-1/2},x_{i+1/2}]
                                    \times [y_{j-1/2},y_{j+1/2}]$
and the constant cell sizes by $\Delta x = x_{i+1/2} - x_{i-1/2}$ and
$\Delta y = y_{j+1/2} - y_{j-1/2}$.
We denote the cell centers by $x_{i} = (x_{i-1/2} + x_{i+1/2})/2$ and
$y_{j} = (y_{j-1/2} + y_{j+1/2})/2$.

A semi-discrete finite volume scheme for the numerical approximation of
\eqref{eq:nm_md_0010} then takes the following form
\begin{equation}
  \label{eq:nm_md_0030}
  \frac{\mathrm{d}}{\mathrm{d} t} \bU_{i,j}
  = \cL(\bU)
  = - \frac{1}{\Delta x}
      \left( \bF_{i+1/2,j} - \bF_{i-1/2,j} \right)
    - \frac{1}{\Delta y}
      \left( \bG_{i,j+1/2} - \bG_{i,j-1/2} \right)
    + \bS_{i,j}
  ,
\end{equation}
where $\bU_{i,j}$ denotes the approximate cell averages of the conserved
variables,
\begin{equation}
  \label{eq:nm_md_0040}
  \bF_{i\pm1/2,j} = \mathcal{F}(\bW_{i\pm1/2-,j},\bW_{i\pm1/2+,j})
  \quad \text{and} \quad
  \bG_{i,j\pm1/2} = \mathcal{G}(\bW_{i,j\pm1/2-},\bW_{i,j\pm1/2+})
\end{equation}
the numerical fluxes through the respective cell face, and $\bS_{i,j}$ the cell
averages of the source term.
The $\bW_{i\pm1/2\mp,j}$ and $\bW_{i,j\pm1/2\mp}$ denote the traces of the
primitive variables at the center of the cell face, in the respective direction.

The equilibrium preserving reconstruction $\mathcal{W}$ of \cref{sec:wb_rc} is
trivially applied in $x$- and $y$-direction independently.
In $x$-direction, let
\begin{align}
  \bW^{x}_{eq,i,j}(x) = \left(\rho^x_{eq,i,j}(x), v^x_{x,eq,i,j}(x), 0, p^x_{eq,i,j}(x)\right)
\end{align} 
be the solution of
\begin{align}
  s = \const
  , \quad
  \rho v_x = \const
  , \quad
  \frac{v_x^{2}}{2} + h + \phi = \const,
\end{align}
as described in \cref{sec:local_equilibrium}. The well-balanced reconstruction
in $x$-direction is then
\begin{equation}
\label{eq:nm_md_0050}
  \bW_{i,j}(x,y_{j}) = \mathcal{W}^{x}\left(
                                        x,y_{j};
                                        \left\{\bW_{k,j}\right\}_{k \in S_{i}}
                                      \right)
                     = \bW^{x}_{eq,i,j}(x)
                     + \delta^{x}\bW_{i,j}(x,y_{j}).
\end{equation}
Note that since the transverse component of the velocity of $\bW^x_{eq,i,j}$ is
zero, the reconstruction of that component of the velocity is in fact
simply the standard reconstruction. The reconstruction in
$y$-direction is obtained by simply reversing the roles of $v_x$ and $v_y$.

The gravity source terms are discretized as
\begin{equation}
\label{eq:nm_md_0070}
  \bS_{i,j} = \bS^{x}_{i,j} + \bS^{y}_{i,j}
  , \quad
  \bS^{x}_{i,j} = \begin{bmatrix}
                    0                        \\
                    S^{x,(\rho v_{x})}_{i,j} \\
                    0                        \\
                    S^{x,(E)}_{i,j}
                  \end{bmatrix}
  \quad \text{and} \quad
  \bS^{y}_{i,j} = \begin{bmatrix}
                    0                        \\
                    0                        \\
                    S^{y,(\rho v_{y})}_{i,j} \\
                    S^{y,(E)}_{i,j}
                  \end{bmatrix}
  .
\end{equation}
The momentum source term is computed by \cref{eq:wb_src_02} on a
dimension-by-dimension basis as
\begin{equation}
\label{eq:nm_md_0080}
  S^{x,(\rho v_{x})}_{i,j} = \left. 
                               \frac{1}{\Delta x}
                               f^{(\rho v_{x})}
                               \left( \bW^{x}_{eq,i,j}(x,y_{j})\right)
                             \right|_{x_{i-1/2}}^{x_{i+1/2}}
  \quad \text{and} \quad
  S^{y,(\rho v_{y})}_{i,j} = \left. 
                               \frac{1}{\Delta y}
                               g^{(\rho v_{y})}
                               \left( \bW^{y}_{eq,i,j}(x_{i},y)\right)
                             \right|_{y_{j-1/2}}^{y_{j+1/2}}
  .
\end{equation}
Likewise, the energy source term is computed by \cref{eq:wb_src_03} on a
dimension-by-dimension basis as
\begin{equation}
\label{eq:nm_md_0090}
  S^{x,(E)}_{i,j} = \left. 
                      \frac{1}{\Delta x}
                      f^{(E)} \left( \bW^{x}_{eq,i,j}(x,y_{j})\right)
                      \right|_{x_{i-1/2}}^{x_{i+1/2}}
  \quad \text{and} \quad
  S^{y,(E)}_{i,j} = \left. 
                      \frac{1}{\Delta y}
                      g^{(E)} \left( \bW^{y}_{eq,i,j}(x_{i},y)\right)
                      \right|_{y_{j-1/2}}^{y_{j+1/2}}
  .
\end{equation}

This concludes the outline of the extension of the well-balanced schemes to
multiple dimensions.
It is clear that it is well-balanced if the streamlines of the considered
steady state are aligned with the $x$- or $y$-axis.

\section{Numerical Experiments}\label{sec:numex}
In this section, we test our well-balanced schemes on a series of numerical
experiments and compare their performance with a standard (unbalanced) base
scheme. Furthermore, we also compare to the hydrostatically well-balanced scheme
\cite{Kaeppeli2014}.
For the sake of conciseness, we only present the results for the practically
relevant second-order schemes.

To characterize a time scale on which a model reacts to perturbations of its
equilibrium, we define a characteristic crossing time
\begin{equation}
\label{eq:numex_0010}
  \sndxtime = \int_{x_{0}}^{x_{1}} \frac{\ud x}{\abs{v} + c}
  ,
\end{equation}
where $v$ is the fluid velocity and $c$ the sound speed.
It measures the time it takes a wave traveling at the fastest characteristic
speed to traverse the steady state of interest.

We quantify the accuracy of the schemes by computing the absolute errors
\begin{equation}
\label{eq:numex_0020}
  err_{1}(q) = \|q - q_{ref}\|_{1}
  ,
\end{equation}
where $\|.\|_{1}$ denotes the 1-norm, $q$ some quantity of interest (e.g.
pressure, velocity, ...) and $q_{ref}$ a reference solution.
The reference solution may be the steady state to be maintained discretely or
the result of an appropriately averaged high-resolution simulation.
While the comparison with a numerically obtained reference solution does not
provide a rigorous evidence of convergence, it nevertheless indicates a
meaningful measure of the errors.
We also introduce the following relative error measure
\begin{equation}
\label{eq:numex_0020}
  relerr_{1}(q) = \frac{\|q - q_{ref}\|_{1}}{\|q_{ref}\|_{1}}
  .
\end{equation}

We begin in \cref{numex:gaussian_bump} by several simple one-dimensional
numerical experiments with Cartesian geometry followed in
\cref{numex:bondi_sec} by several simple one-dimensional numerical
experiments with spherical geometry.
Both setups employ an ideal gas \eos.
The interested reader may readily reproduce these experiments in order to check
his or her implementation.
Finally, we demonstrate in \cref{subsec:numex_stella} the performance of the
scheme on a two-dimensional stellar accretion problem in cylindrical
coordinates involving a complex multi-physics \eos.

\subsection{One-dimensional steady adiabatic flow} \label{numex:gaussian_bump}
The first and simplest test we perform is a steady state solution of
\cref{eq:euler} in Cartesian coordinates on the domain $\domain = [0, 2]$
with and without a perturbation. The gravity is given by a linear gravitational
potential, i.e. $\phi(x) = g x$, with $g = 1$. The ratio of specific heats is
$\adidx = 5/3$. We enforce boundary conditions by keeping the values in the
ghost-cells constant and equal to the initial conditions. All numerical solvers
in this subsection use the HLLC numerical flux and the monotonized centered
limiter. The tolerance in the root finding procedure
\cref{algo:equilibrium_ideal} is $tol = 10^{-13}$. The CFL number is $c_{CFL} = 0.45$.

The initial conditions are
\begin{align} \label{eq:numex_ad_0000}
  \left(\rho^0, v^0, p^0\right)(x)
 = \left(\rho_{eq}(x), v_{eq}(x), p_{eq}(x) + A \exp(-(x - \bar{x})^2/\sigma^2)\right)
\end{align}
where $\sigma = 0.1$, $\bar{x}$ is the center of the bump and
$(\rho_{eq}, v_{eq}, p_{eq})$ is the equilibrium defined by the
points values 
\begin{align}
  (\rho_0, v_{0}, p_{0}) = (1, -M c_{s,0}, 1)
\end{align}
at $x_0 = 0$. Here $c_{s,0} = \gamma^{1/2}$ denotes the speed of sound at $x_0$.
The discrete initial conditions are obtained by applying the midpoint rule to
\cref{eq:numex_ad_0000} which results in
\begin{align}
  \left(\rho_i^0, v_i^0, p_i^0\right) = \left(\rho^0(x_i), v^0(x_i), p^0(x_i)\right).
\end{align}

We perform the experiment for a hydrostatic ($M = 0$), a subsonic ($M = 0.01$) and
a supersonic ($M = 2.5$) equilibrium and different sizes of the perturbation
(specified later) will be investigated. The location of the perturbation is
\begin{align}
  \bar{x} = \begin{cases}
    1.0, & \text{for } M = 0 \\
    1.1, & \text{for } M = 0.01 \\
    1.5, & \text{for } M = 2.5.
  \end{cases}
\end{align}

All convergence studies in this subsection are run with the unbalanced,
hydrostatically well-balanced and adiabatically well-balanced schemes on $N = 32, 64,
128, \dots, \num{2048}$ cells. A reference solution is computed by the adiabatically
well-balanced method on $N = 8192$ cells.

\subsubsection{Well-balanced property}
In this first experiment, we set the amplitude of the perturbation to zero and
check that the proposed scheme is well-balanced for all three values of $M$. The
simulation is run until $\tend = 4$ ($M < 1$) or $\tend = 1$ ($M > 1$)
which corresponds to roughly two characteristic crossing times $\sndxtime$.

The results are shown in \cref{tab:gaussian_bump_wb}. For all values of $M$ the
adiabatically well-balanced method preserves the discrete stationary state
down to machine precision.
As expected, the hydrostatically well-balanced method only preserves the
hydrostatic case where $M = 0$.
Finally, the unbalanced scheme produces large errors and is unable to maintain
the steady state accurately.

\begin{figure}[htbp]
  \begin{center}
    \includegraphics[width=0.32\columnwidth]{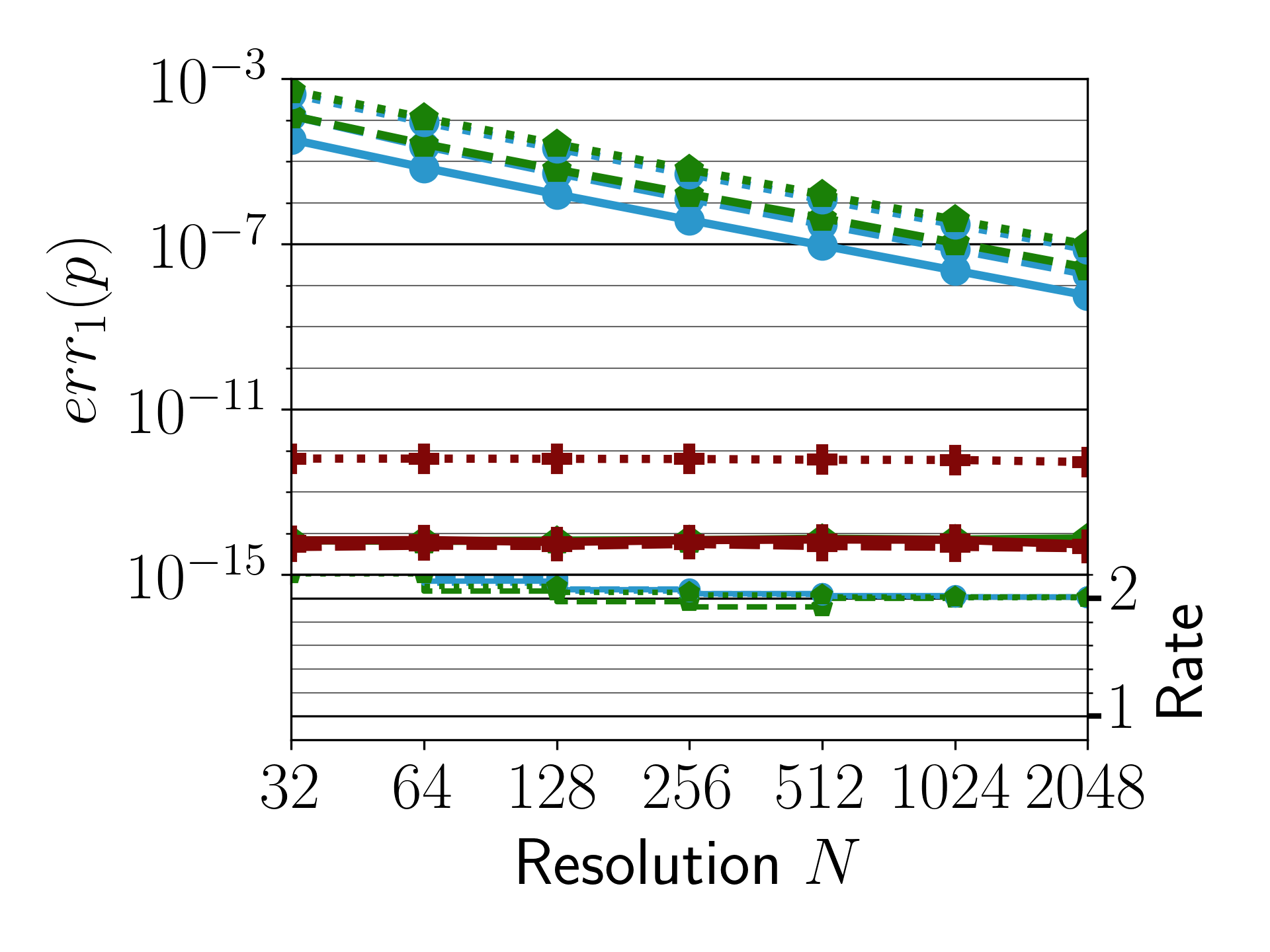}
    \includegraphics[width=0.32\columnwidth]{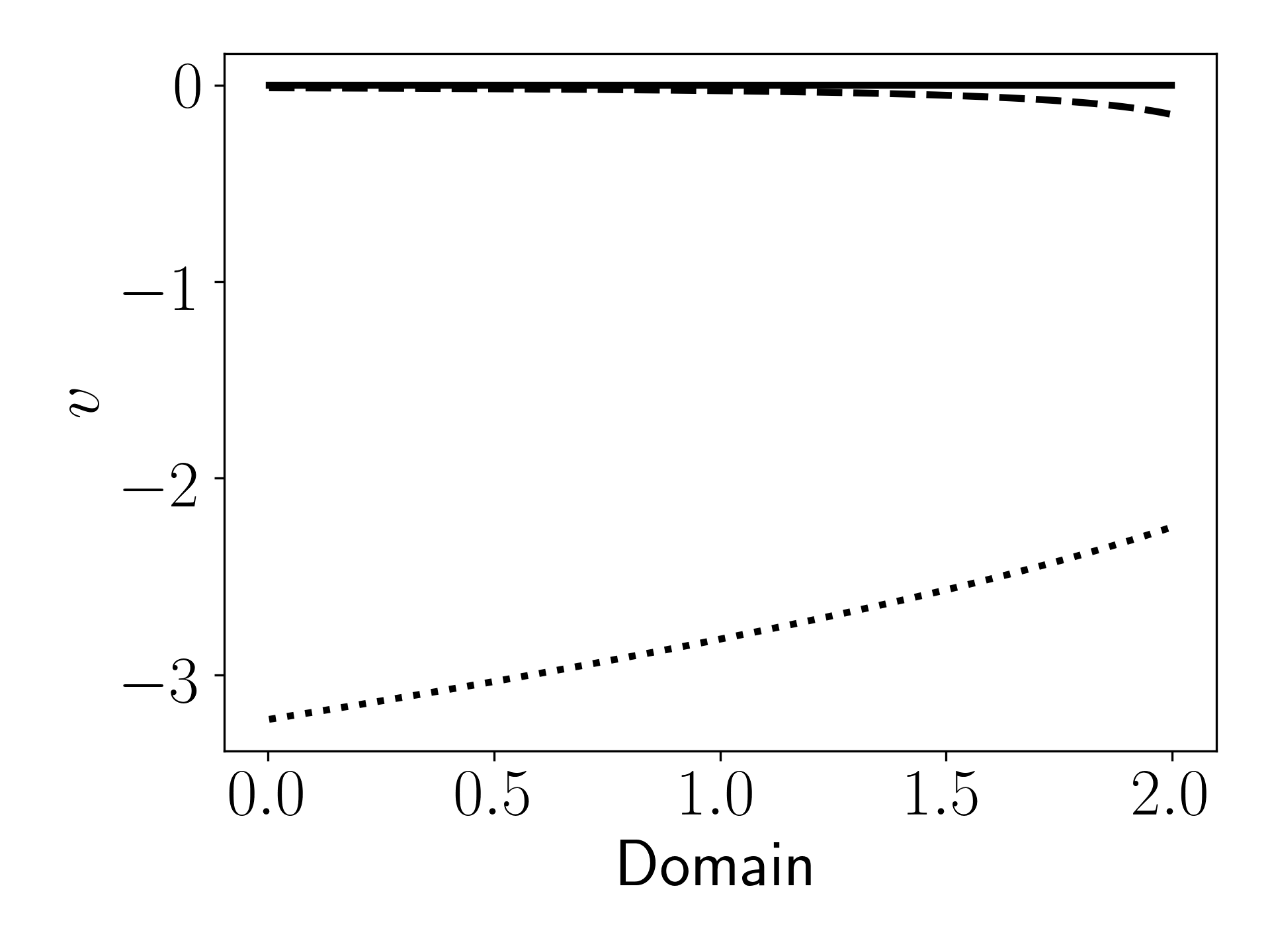}
    \includegraphics[width=0.32\columnwidth]{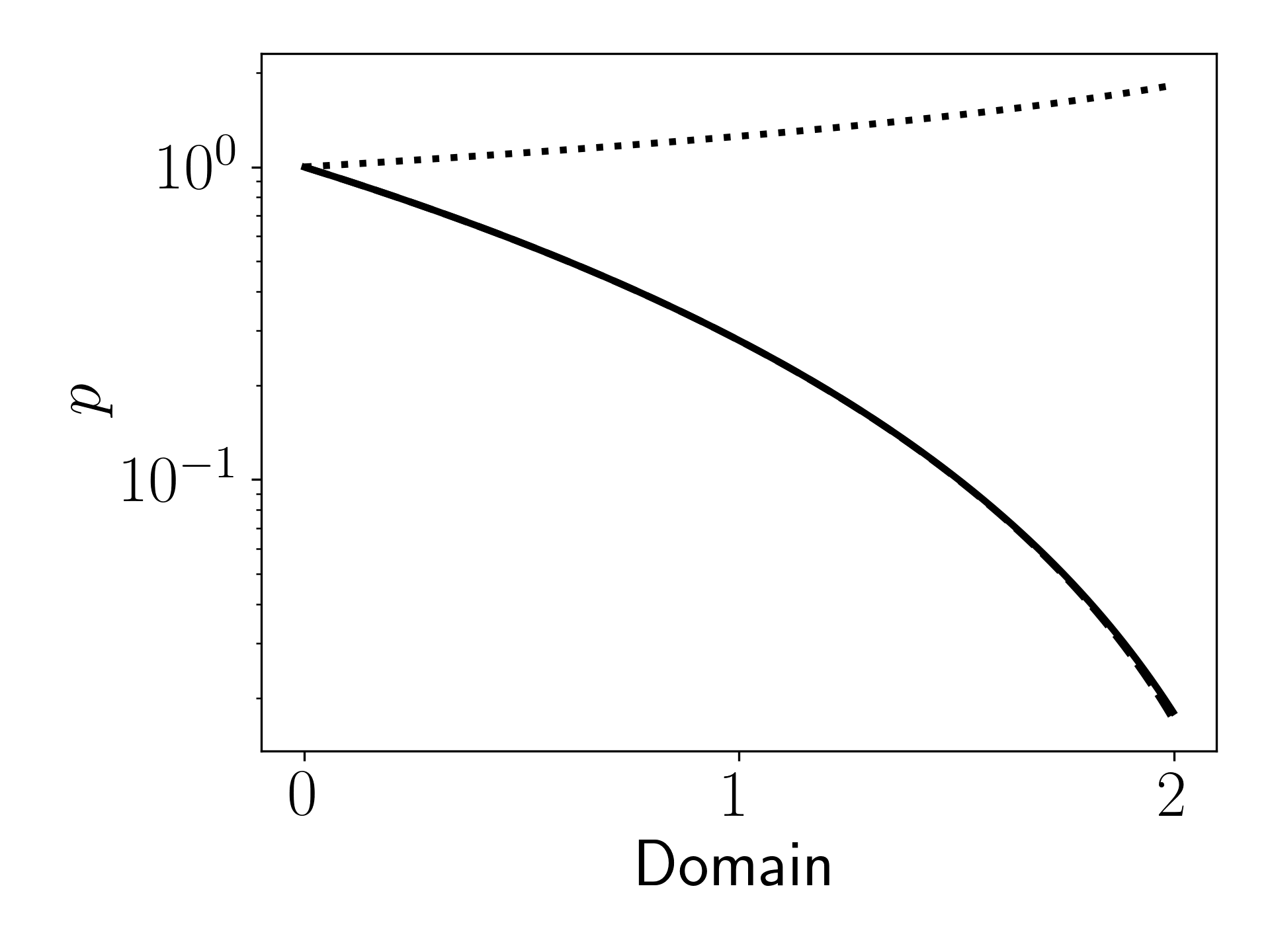}
  \end{center}
  \vspace{-0.2in}
  \caption{The left most column shows the well-balanced property of the methods
    on the problem described in \cref{numex:bondi} (with $A = 0$). The upper part of
    the subplot shows the $L^1$-error of the pressure $p$. The
    lower subplot shows the convergence rate between two consecutive levels of
    refinement. The unbalanced, hydrostatically well-balanced and adiabatically
    well-balanced scheme is shown in blue, green and red, respectively. The
    solid black line is the reference solution. The middle column shows the velocity
    and the right column shows the pressure, both at $\tend = 0$. Solid, dash and
    dotted lines represent $M = 0$, $M = 0.01$ and $M = 2.5$ cases, respectively.
    Note that the pressure for the hydrostatic and subsonic cases nearly coincide.}
  \label{fig:bondi_wb_a}
\end{figure}

\subsubsection{Smooth wave propagation}
We now compare the ability of the schemes to propagate small perturbations on
top of the equilibrium.
The size of the perturbation,
$A = 10^{-6}$, is chosen such that the wave remains smooth for the duration of
the numerical experiment.

The results at $\tend = 0.45$ ($M < 1$) or $\tend = 0.25$ ($M > 1$) are shown in
\cref{fig:gaussian_bump_small_a,fig:gaussian_bump_small_b,fig:gaussian_bump_small_c} and \cref{tab:gaussian_bump_small} is the
corresponding convergence table. All three solvers attain their formal
second-order accuracy. The adiabatically well-balanced method is the only
scheme capable of evolving the pressure perturbation accurately for all Mach
numbers and for $M > 0$ the error is smaller by a factor of at least $100$
compared to the hydrostatically well-balanced method. The hydrostatically
well-balanced method coincides with the adiabatically well-balanced method for
$M = 0$. At $M = 0.01$ the hydrostatically well-balanced method is only slightly
better than the unbalanced solver. For $M = 2.5$ the hydrostatically
well-balanced method has lost its advantage over the unbalanced solver. We
highlight that the errors of the adiabatically well-balanced scheme at the
lowest resolutions $N=32, 64$ are comparable to the errors of the unbalanced
scheme at the highest resolution $N=2048$ for all velocities.

\begin{figure}[htbp]
  \begin{center}
    \includegraphics[width=0.32\columnwidth]{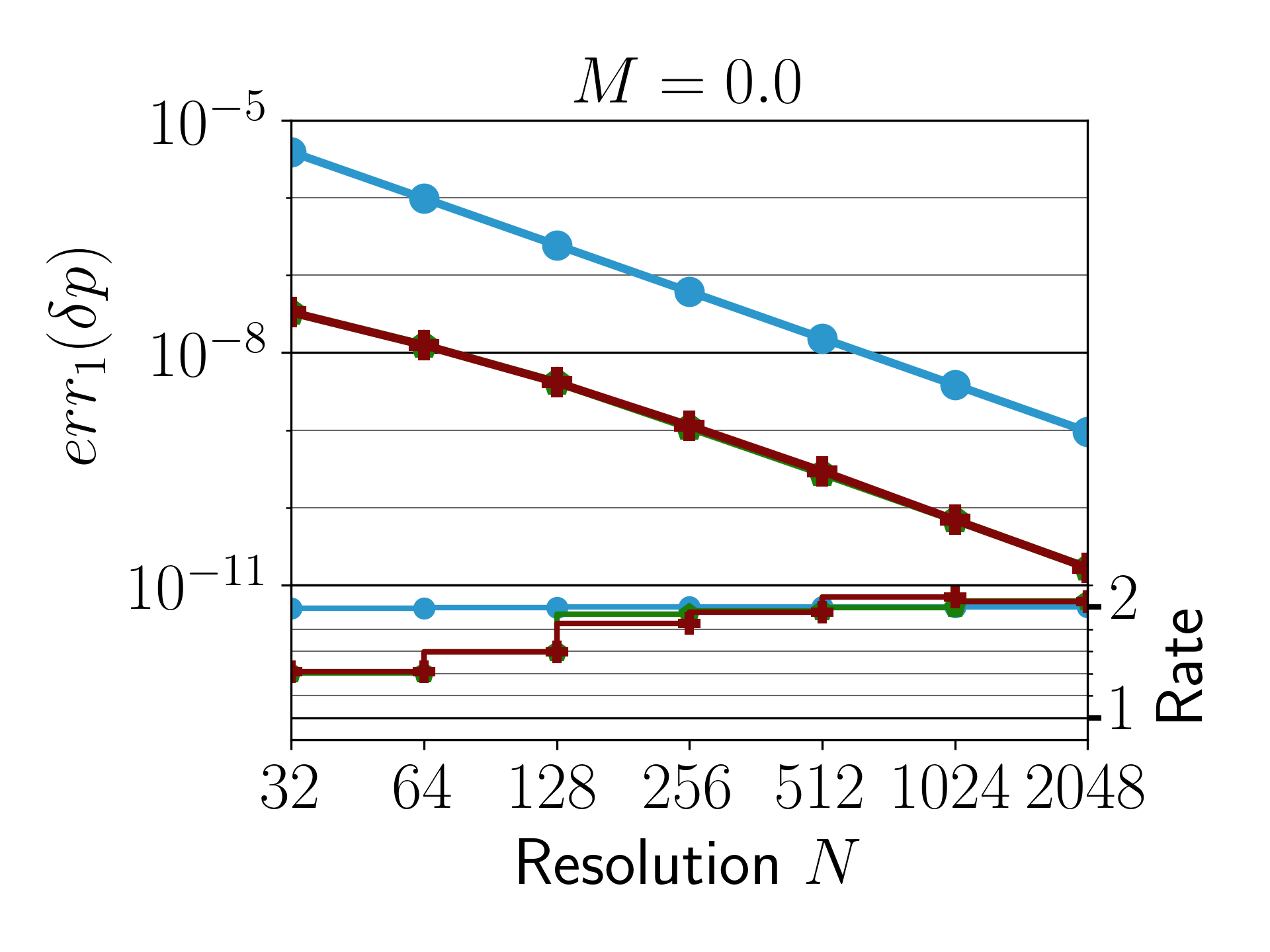}
    \includegraphics[width=0.32\columnwidth]{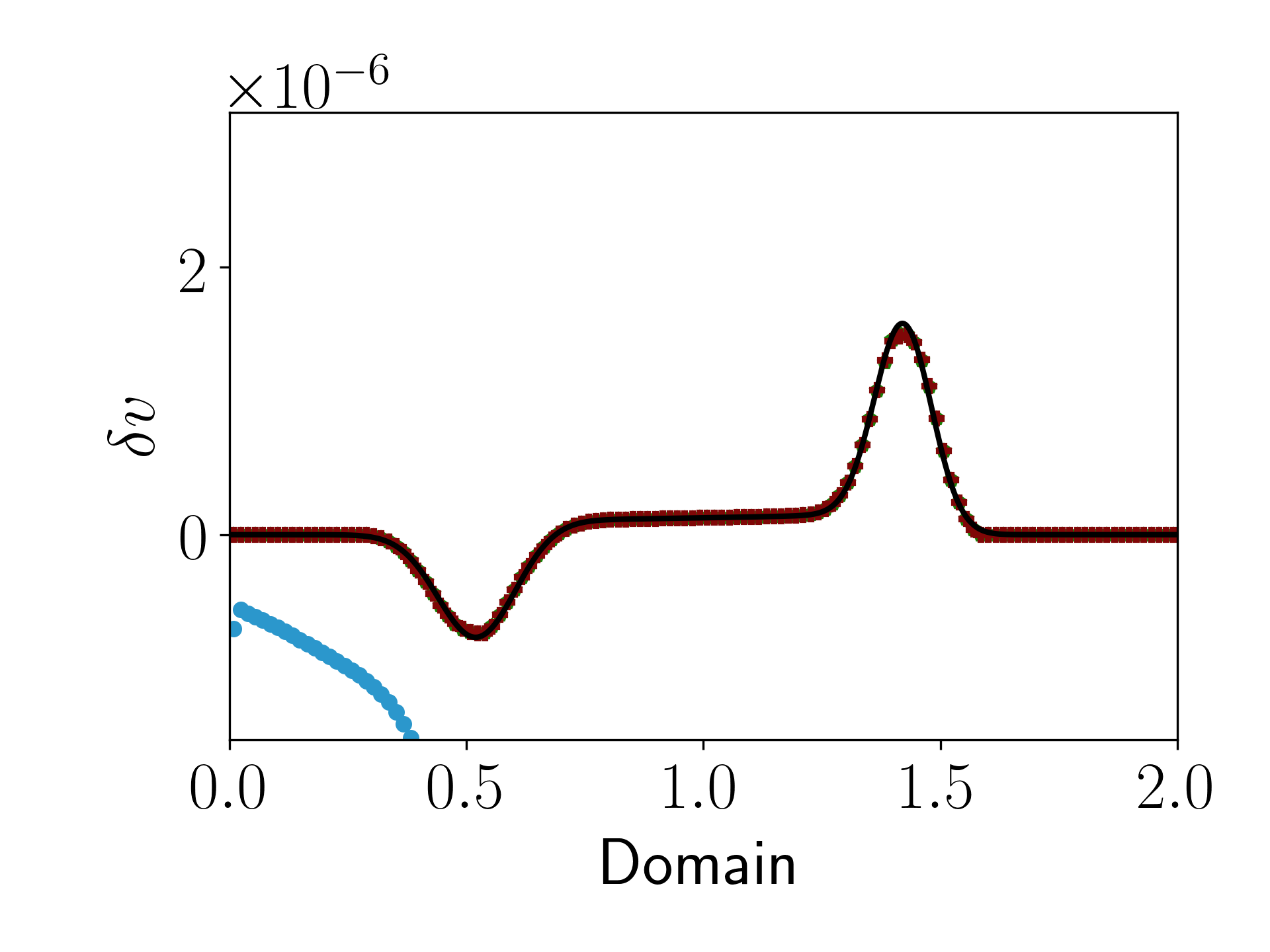}
    \includegraphics[width=0.32\columnwidth]{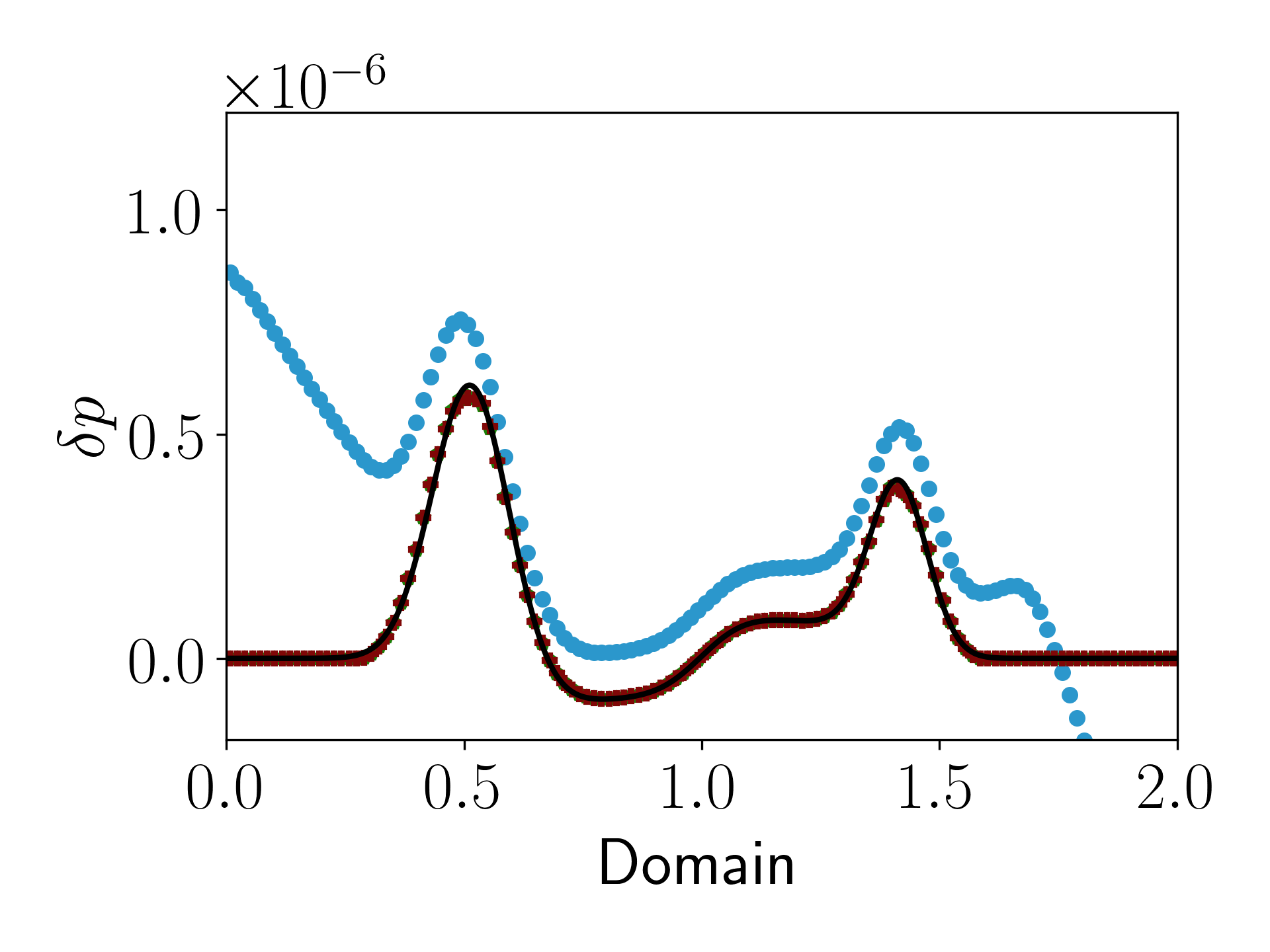}
  \end{center}
  \vspace{-0.2in}
  \caption{The left most column shows the convergence of the methods on the problem
    described in \cref{numex:gaussian_bump} with $A = 10^{-6}$. The upper part
    of the subplot shows the $L^1$-error of the pressure perturbation $\delta
    p$. The lower subplot shows the convergence rate between two consecutive
    levels of refinement. The middle column shows the velocity perturbation
    $\delta v$ and the right column shows the pressure perturbation $\delta p$. The
    Mach number at the reference point is $M = 0$. The scatter plots show the
    approximation with $N = 128$ cells at the final time described in the text. The
    unbalanced, hydrostatically well-balanced and adiabatically well-balanced scheme
    is shown in blue, green and red, respectively. The solid black line is the
    reference solution.}
  \label{fig:gaussian_bump_small_a}
\end{figure}

\begin{figure}[htbp]
  \begin{center}
    \includegraphics[width=0.32\columnwidth]{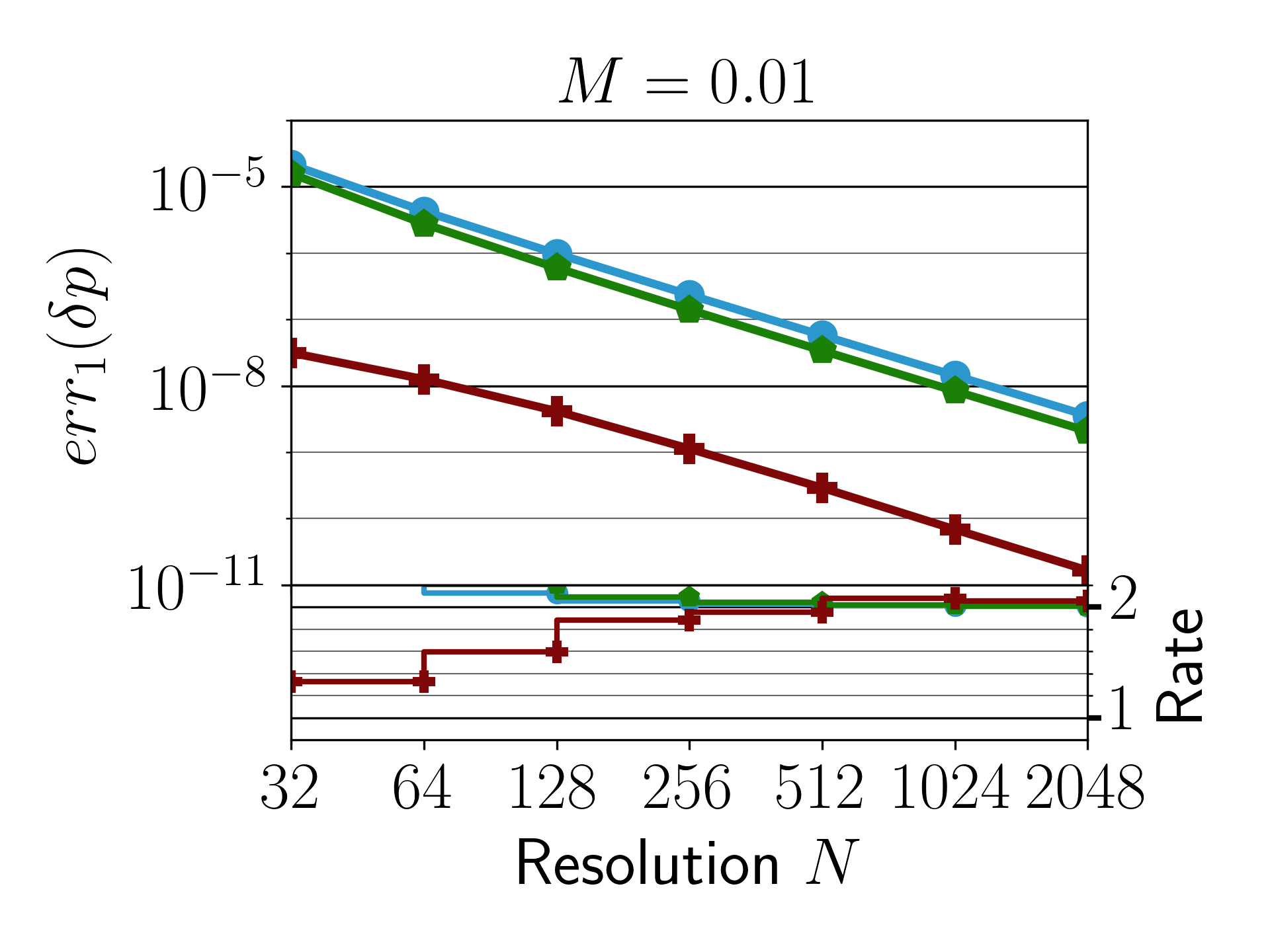}
    \includegraphics[width=0.32\columnwidth]{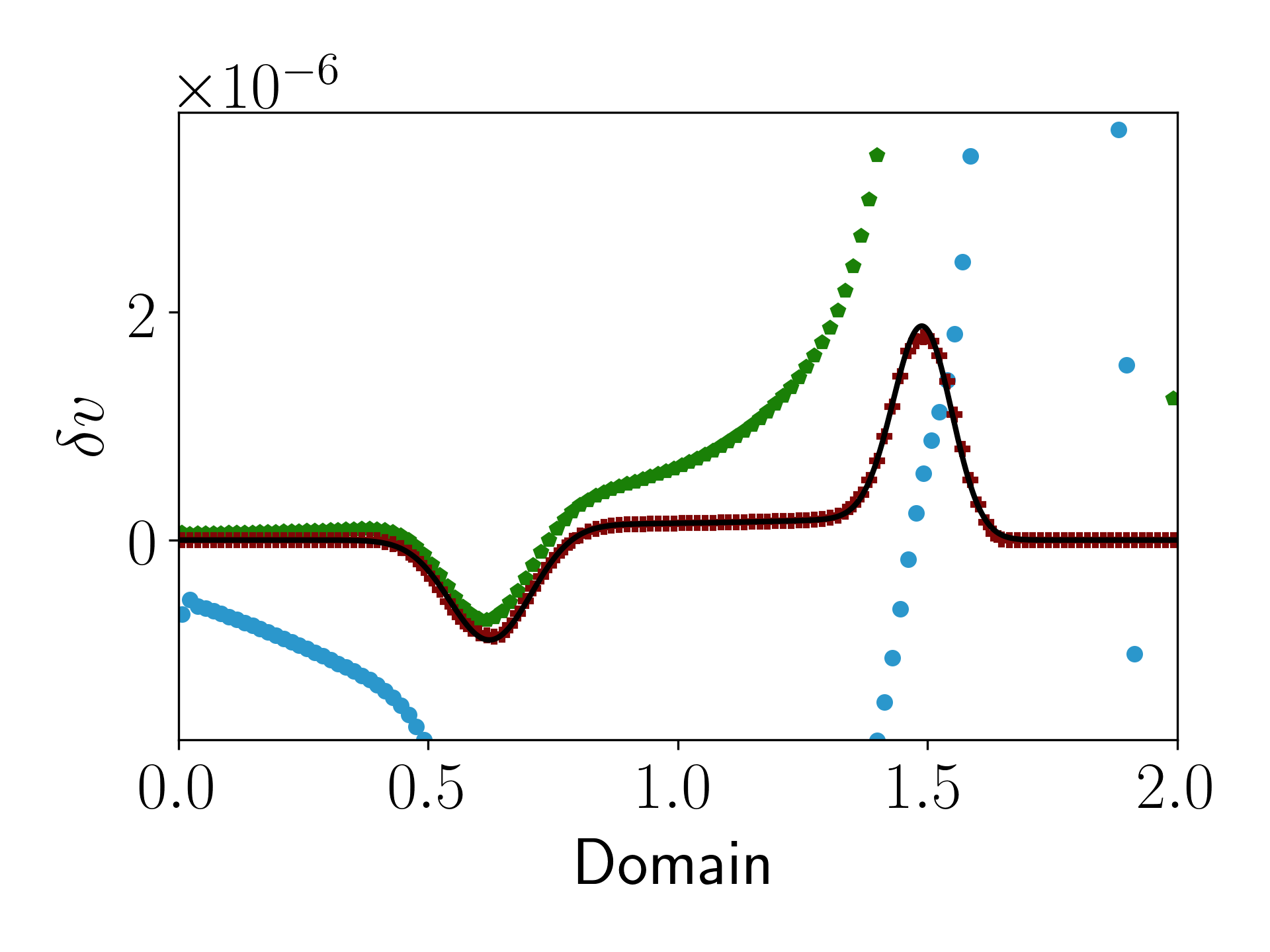}
    \includegraphics[width=0.32\columnwidth]{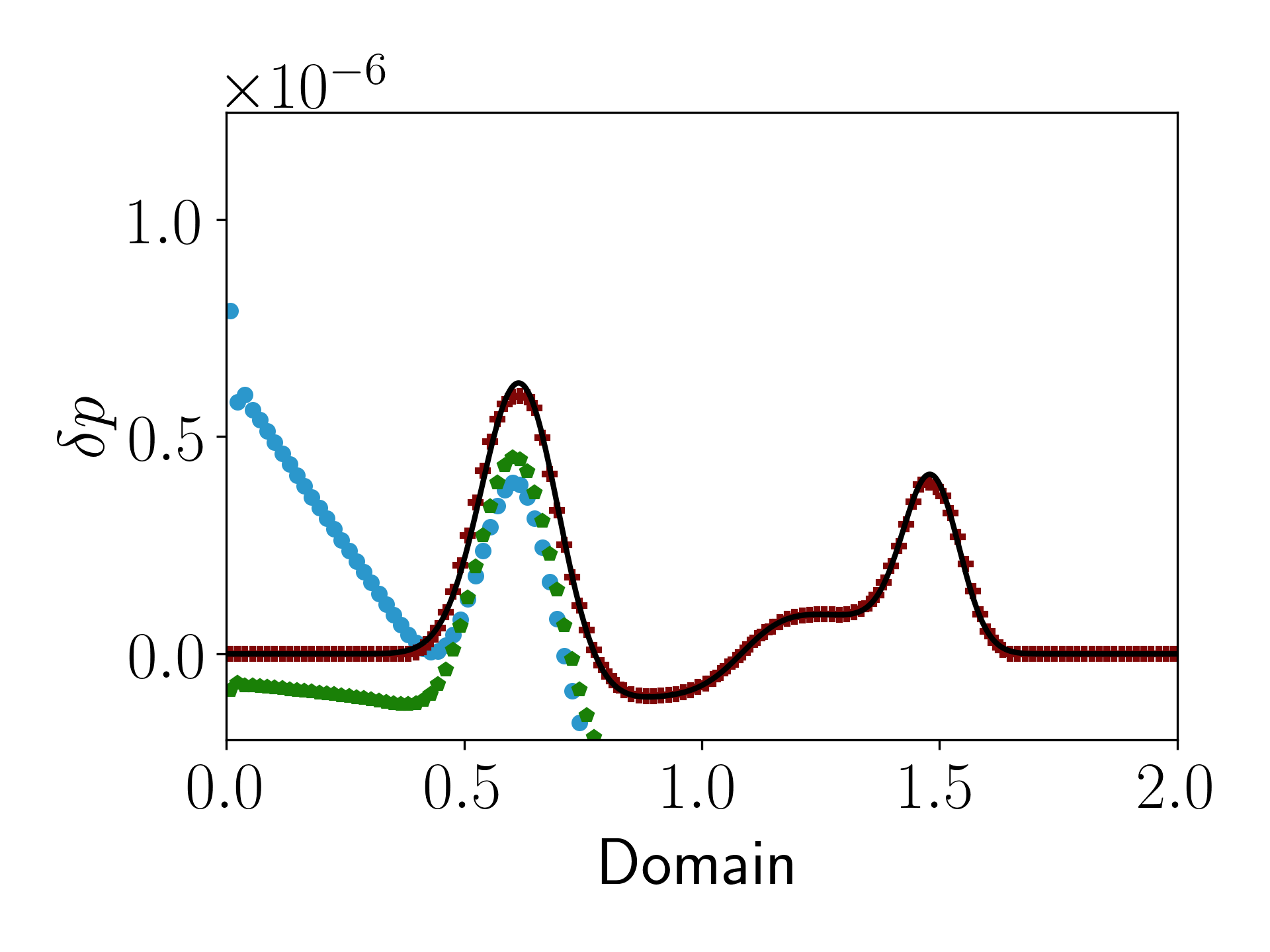}
  \end{center}
  \vspace{-0.2in}
  \caption{This figure shows the results for \cref{numex:gaussian_bump} with
    $A = 10^{-6}$ and $M = 0.01$. Please refer to the caption of
    \cref{fig:gaussian_bump_small_a} for further details.}
  \label{fig:gaussian_bump_small_b}
\end{figure}

\begin{figure}[htbp]
  \begin{center}
    \includegraphics[width=0.32\columnwidth]{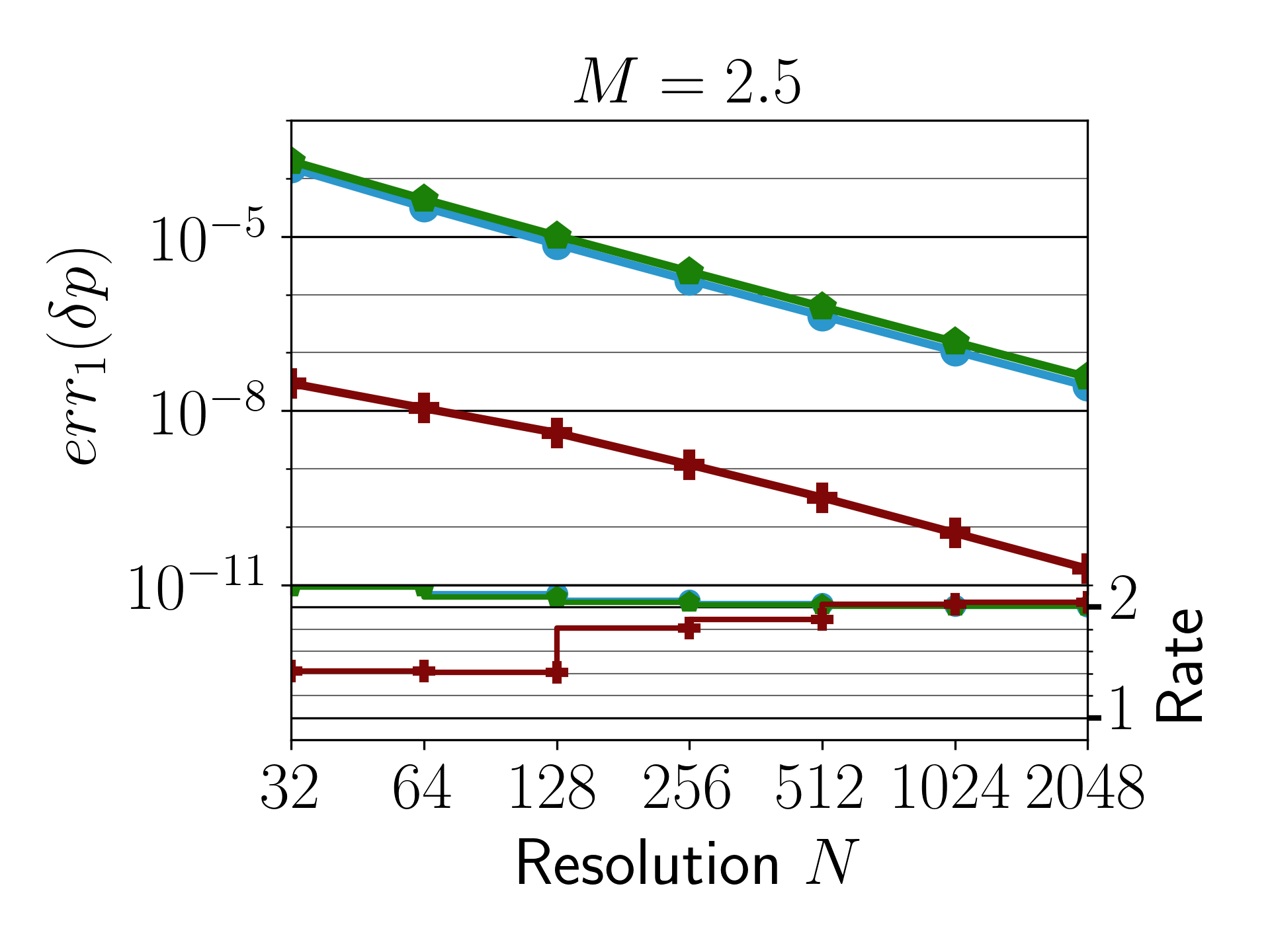}
    \includegraphics[width=0.32\columnwidth]{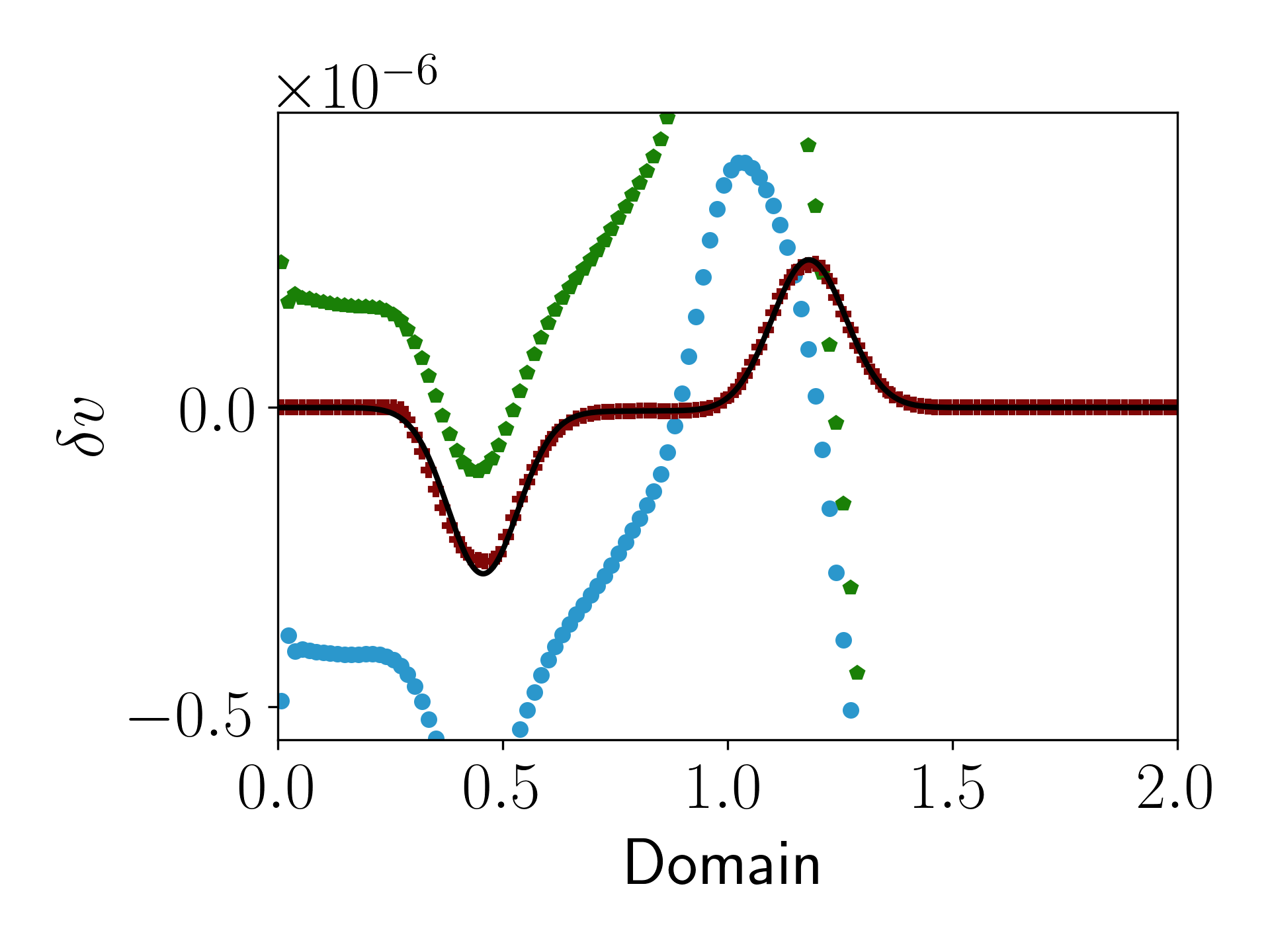}
    \includegraphics[width=0.32\columnwidth]{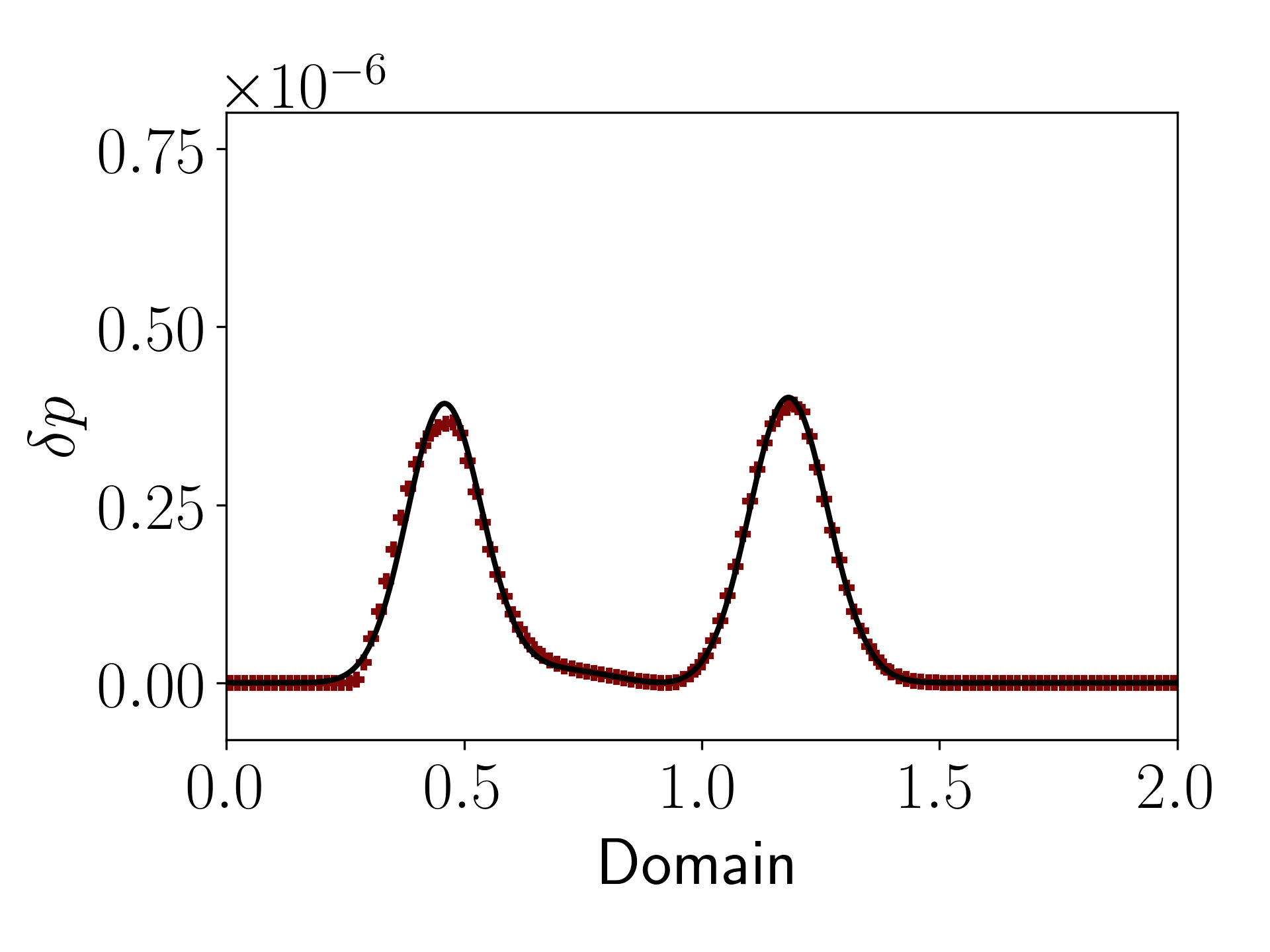}
  \end{center}
  \vspace{-0.2in}
  \caption{This figure shows the results for \cref{numex:gaussian_bump} with
    $A = 10^{-6}$ and $M = 2.5$. Please refer to the caption of
    \cref{fig:gaussian_bump_small_a} for further details.}
  \label{fig:gaussian_bump_small_c}
\end{figure}

\subsubsection{Discontinuous wave propagation}
In this third variant of the numerical experiment, we choose the magnitude of the
pressure perturbation such that due to the non-linearity of the Euler equations
the solution becomes discontinuous before the end of the simulation, which is
chosen to be $\tend = 0.45$ ($M < 1$) or $\tend = 0.25$ ($M > 1$).

The results for $M = 0.01$ are summarized in \cref{fig:gaussian_bump_shock} and
in the corresponding convergence table, \cref{tab:gaussian_bump_shock}.
Numerically, we observe that all three methods are able to propagate the shock
waves.
In fact, they are virtually indistinguishable.
Therefore, we observe that the well-balancing does not impact on the robustness
of the base high resolution shock-capturing finite volume scheme.
The observed convergence rate of approximately one
is expected for solutions with an isolated discontinuity.

\begin{figure}[htbp]
  \begin{center}
    \includegraphics[width=0.32\columnwidth]{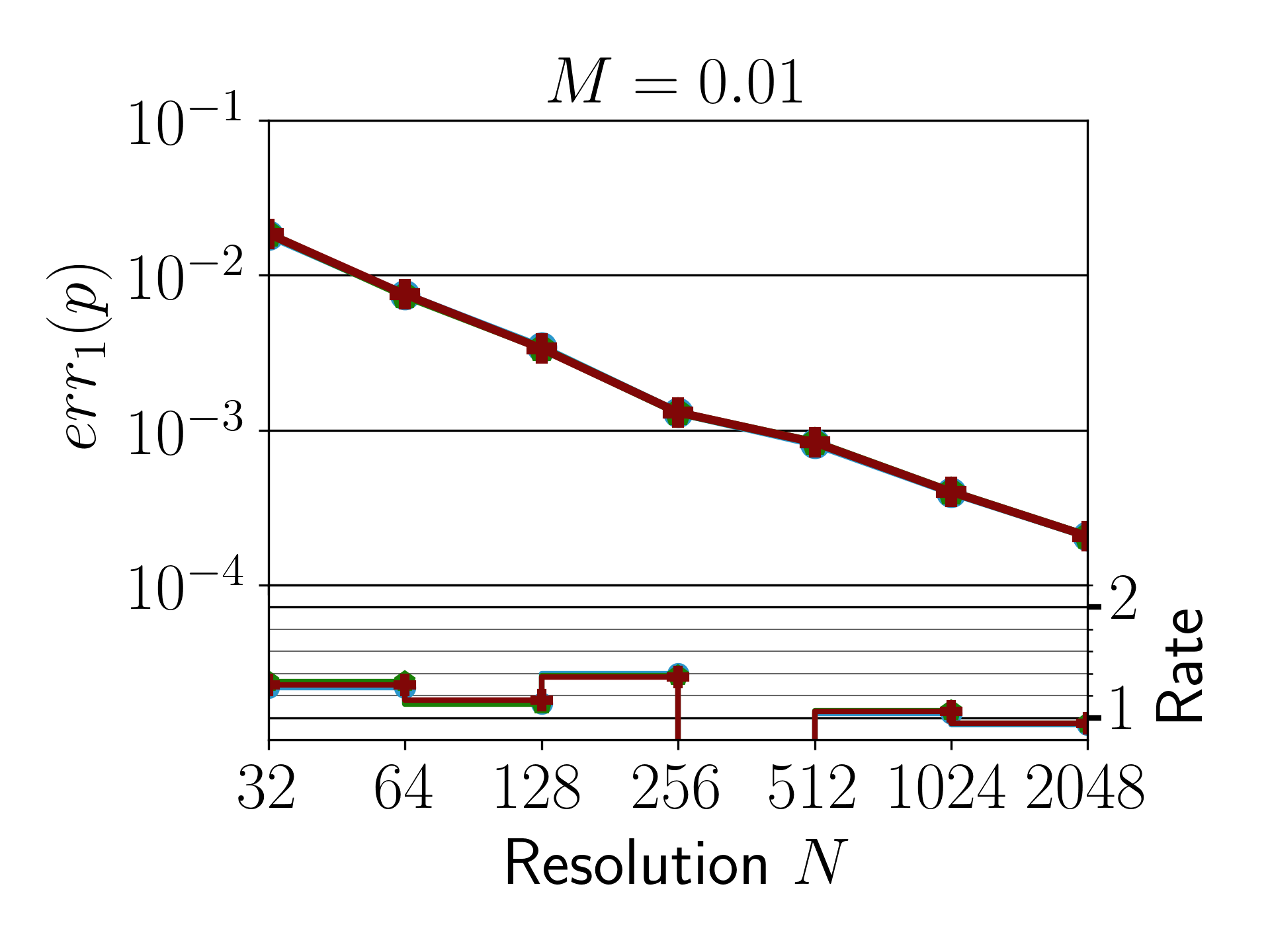}
    \includegraphics[width=0.32\columnwidth]{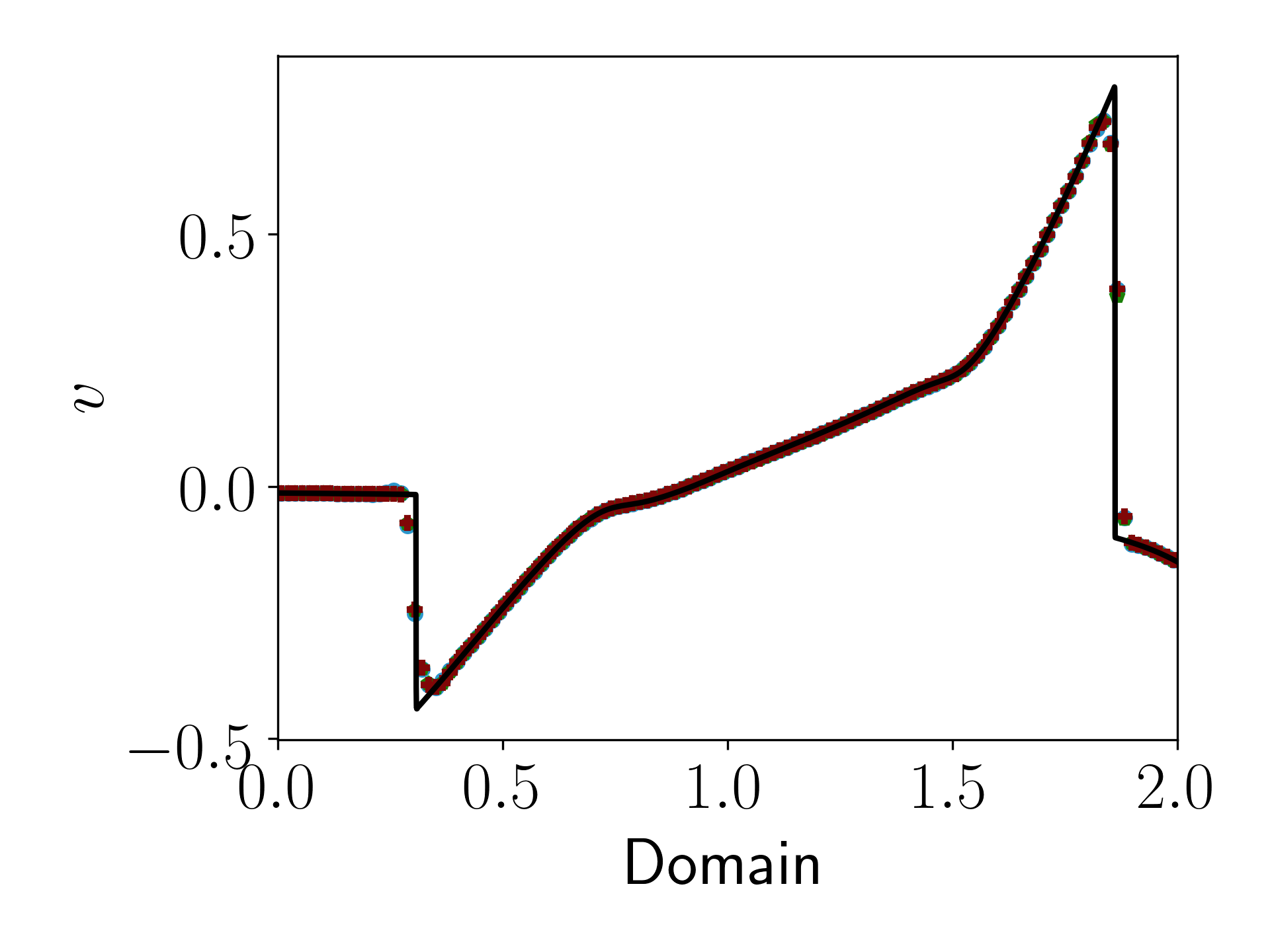}
    \includegraphics[width=0.32\columnwidth]{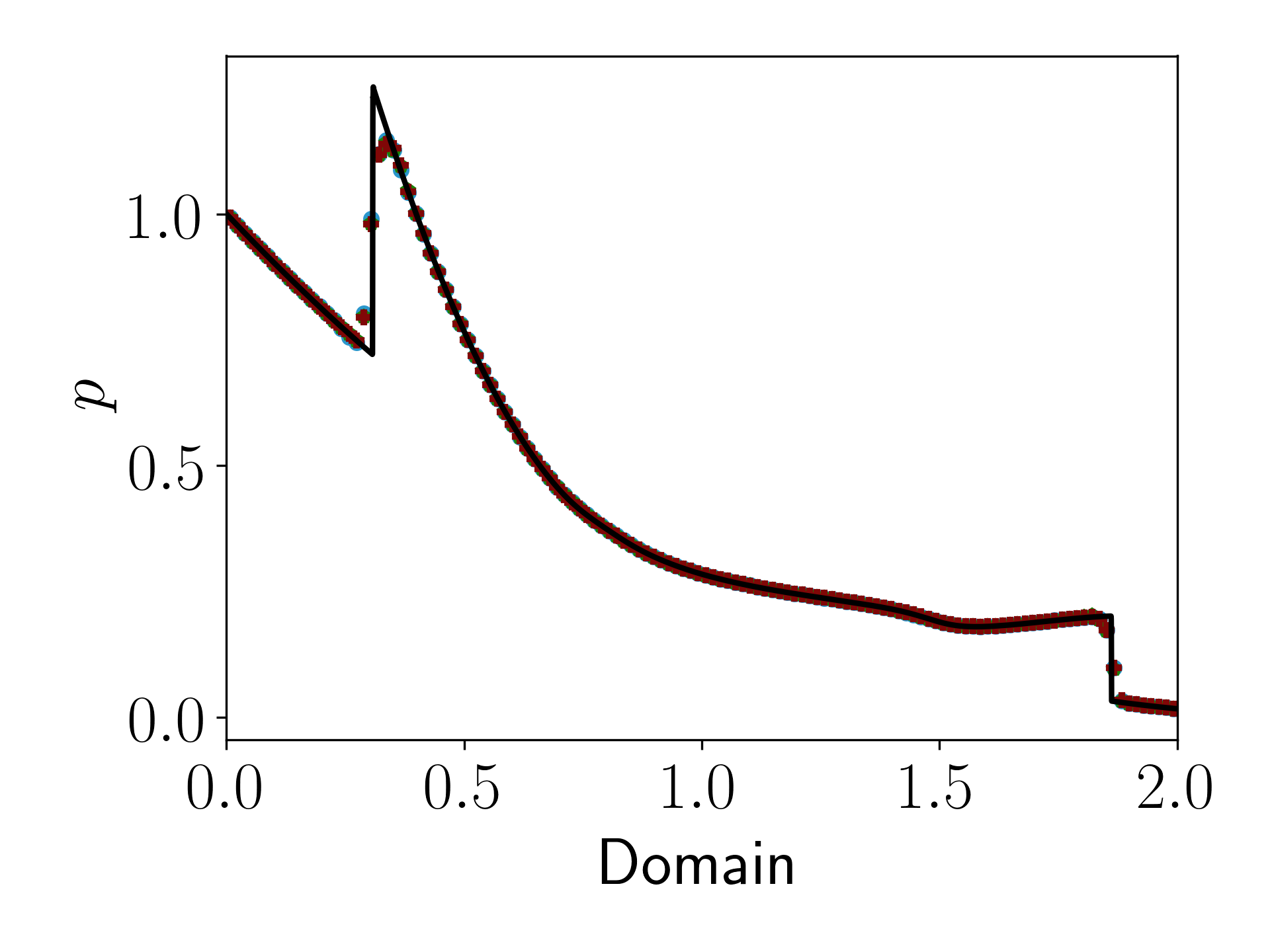}
  \end{center}
  \vspace{-0.2in}
  \caption{The left most column shows the convergence of the methods on the problem
    described in \cref{numex:gaussian_bump} with $A = 1$. The upper part
    of the subplot shows the $L^1$-error of the pressure $p$. The lower subplot
    shows the convergence rate between two consecutive levels of refinement. The
    middle column shows the velocity $v$ and the right column shows the pressure
    $p$. The Mach number at the reference point is $M =
    \num{0.01}$. The scatter plots show the approximation with $N = 128$ cells at
    the final time described in the text. The unbalanced, hydrostatically
    well-balanced and adiabatically well-balanced scheme is shown in blue, green and red,
    respectively. The solid black line is the reference solution.}
  \label{fig:gaussian_bump_shock}
\end{figure}

\clearpage
\subsection{One-dimensional, spherically symmetric experiments} \label{numex:bondi_sec}
The next two experiments are similar to the previous one, in the sense that we
consider a stationary state both with and without a perturbation. However, this
stationary state models the spherically symmetric, steady state accretion of gas
onto a star known as Bondi accretion flows.

In a first numerical experiment the equilibrium solution is assumed to be
continuous and either purely sub- or supersonic. In a second experiment we will
consider an equilibrium in which the sub- and supersonic branches are joined by a
stationary shock.

In both experiments the calculations are performed in spherical coordinates. The
domain is $\domain = [R_0, R_1]$, $R_0 = 0.2$, $R_1 = 1.8$ and the gravitational
potential is $\phi(r) = -Gm / r$ with $G = m = 1$. The adiabatic index is
$\gamma = 4/3$. The ghost-cells are kept
constant and equal to the initial conditions throughout the entire simulation.
All numerical solvers in this subsection use the HLLC numerical flux and, unless
stated explicitly otherwise, the monotonized centered limiter. The tolerance in
\cref{algo:equilibrium_ideal} is $tol=10^{-13}$. The CFL number is $c_{CFL} =
0.45$.

\subsubsection{Smooth equilibrium}\label{numex:bondi}
The initial conditions are 
\begin{align}
  \left(\rho^0, v^0, p^0\right)(r)
  = \left(\rho_{eq}(r), v_{eq}(r), p_{eq}(r) + A \exp(-(r - \bar{r})^2/\sigma^2)\right)
\end{align}
with $\sigma = 0.08$ and $\bar{r} = 0.4 R_0 + 0.6 R_1$. The equilibrium is
defined by the values of the density, velocity and speed of sound at the
reference point $r_0 = 1$:
\begin{align}
  \rho_0 = 1, \quad c_{s,0}^2 = \frac{1}{2}, \quad v_0 = -M c_{s,0}.
\end{align}
This ensures that the critical point is located at $r_0 = 1$ which is the
center of the domain. Therefore, we are sure that with $M = 0.9$ the background
of the initial conditions corresponds to the purely subsonic solution branch of
\cref{eq:eq_sphe} and for $M = 2.0$ the background is purely supersonic. The
parameter $A$ which controls the size of the perturbation will be specified
later.

The initial conditions are computed by first extrapolating the equilibrium from
the reference point $r_0$ to the cell-center of the cell just below $r_0$ and then
iteratively downwards from one cell to the next. The analogous is done for the
upper half of the domain. This improves the initial guess of the equilibrium
extrapolation.

The convergence studies presented in this subsection are all run with $N = 32,
\dots, \num{2048}$ cells for both the unbalanced and adiabatically well-balanced method. The
approximate solution computed by the adiabatically well-balanced scheme on $N = \num{8192}$
cells is used as a reference solution.

\paragraph{Well-balanced property}
First we again check that the adiabatically well-balanced scheme is indeed
well-balanced. Therefore, we choose $A = 0$ and simulate until $\tend = 4$ which
corresponds to approximately four sound crossing times.

The results are shown in \cref{tab:bondi_wb}. The adiabatically well-balanced
scheme preserves the discrete equilibrium up to machine precision. The
unbalanced solver however accrues large $L^1$-errors for both values of the Mach
number.

\begin{figure}[htbp]
  \begin{center}
    \includegraphics[width=0.32\columnwidth]{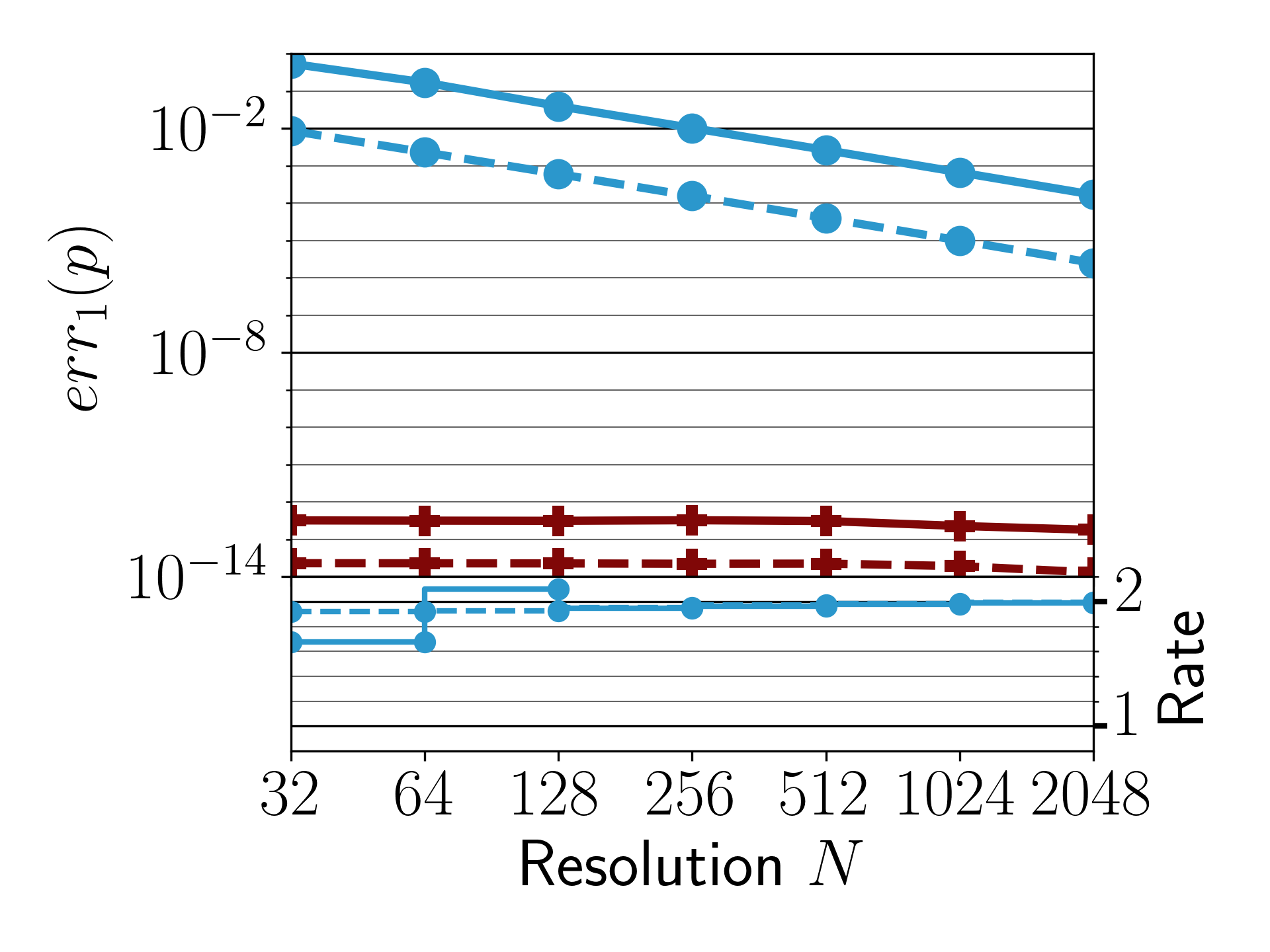}
    \includegraphics[width=0.32\columnwidth]{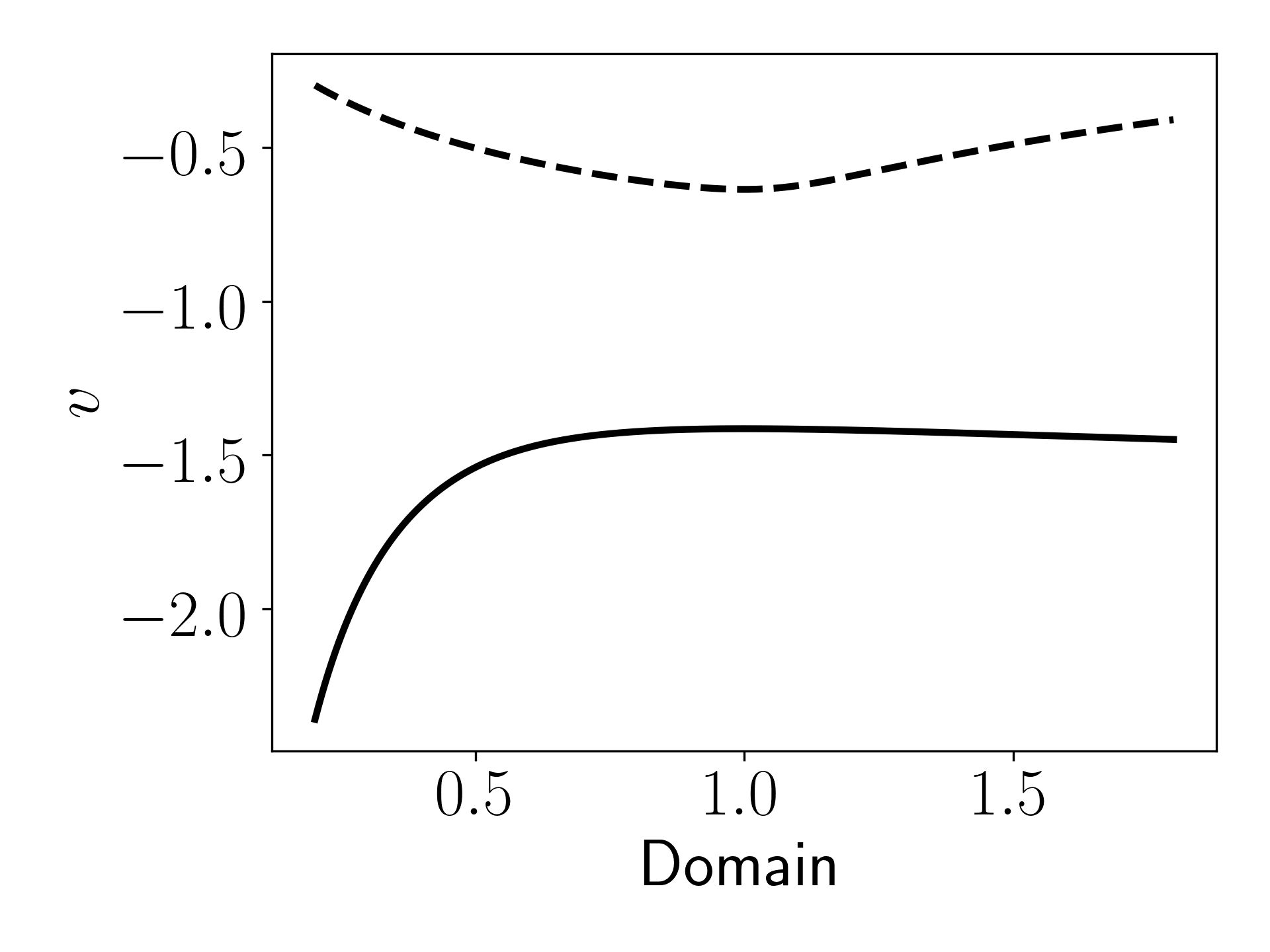}
    \includegraphics[width=0.32\columnwidth]{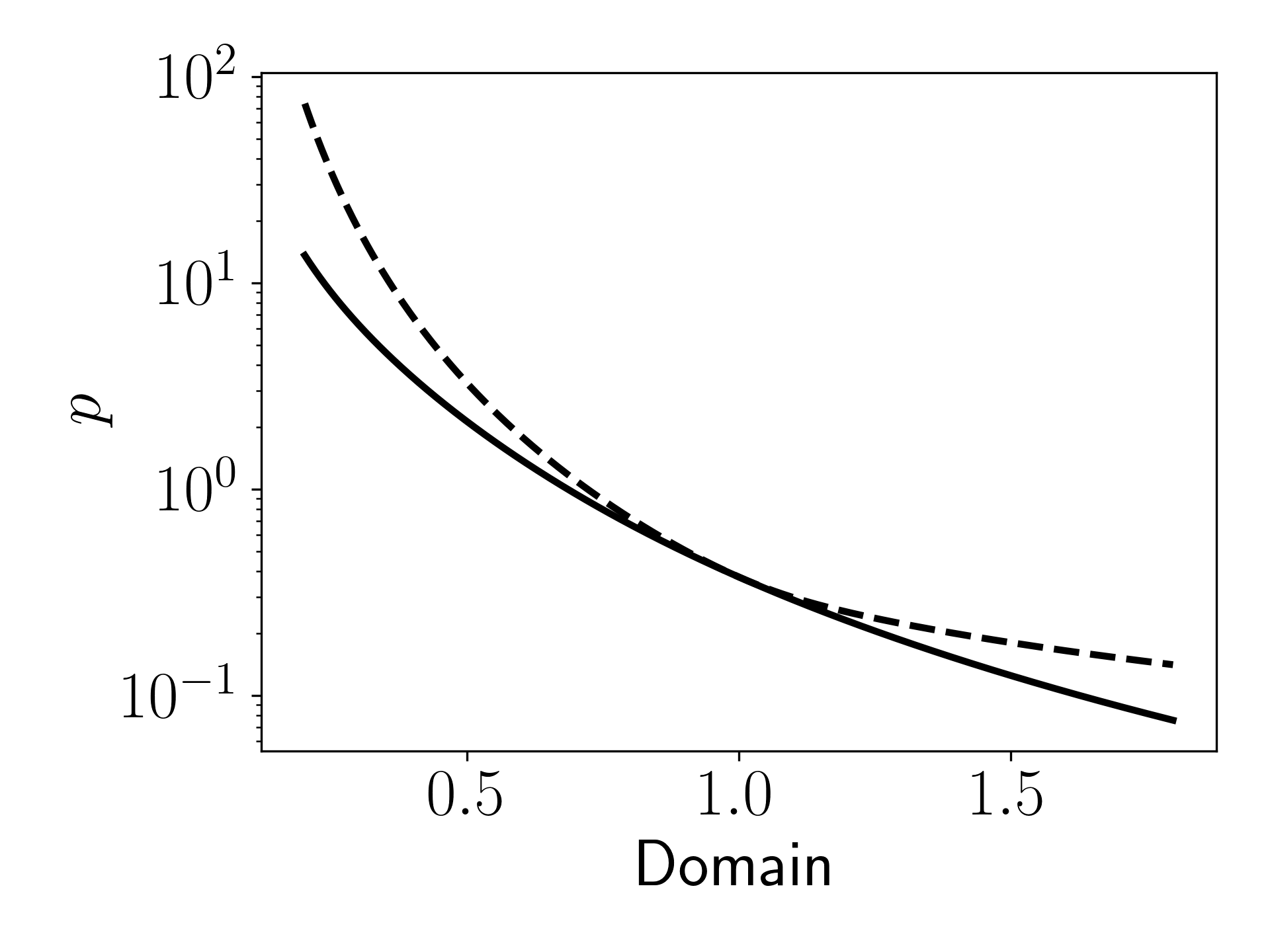}
  \end{center}
  \vspace{-0.2in}
  \caption{The left most column shows the well-balanced property of the methods
    on the problem described in \cref{numex:bondi} (with $A = 0$). The upper part of
    the subplot shows the $L^1$-error of the pressure perturbation $\delta p$. The
    lower subplot shows the convergence rate between two consecutive levels of
    refinement. The unbalanced and adiabatically well-balanced scheme is shown in
    blue and red, respectively. The solid black line is the reference solution. The
    middle column shows the velocity and the right column shows the pressure.
    Solid lines represent $M = 0.9$, the case $M = 2.0$ is depicted as a dashed
    line.}
  \label{fig:bondi_wb_a}
\end{figure}

\paragraph{Smooth wave propagation}
We shall now consider a small perturbation $A = 10^{-4}$ which remains smooth
throughout the simulation. The final time is chosen to be
\begin{align} \label{eq:numex_0210}
  \tend = 0.5 \min\left(
  0.6 \frac{R_1 - R_0}{c_{0} + \abs{v_{0}}},
  0.3 \frac{R_1 - R_0}{\max(c_{0} - \abs{v_{0}}, 10^{-10})}
  \right).
\end{align}

The results are show in \cref{fig:bondi_small_a,fig:bondi_small_b} and the
corresponding convergence table is \cref{tab:bondi_small}. For both values of
the Mach number the adiabatically well-balanced scheme resolves the wave
faithfully, and does not perturb the regions of the domain which have not yet
been reached by the wave. The resulting $L^1$-errors of the pressure
perturbation are a factor of $10^3$ and $10^2$ smaller than those of the
unbalanced method for $M = 0.9$ and $M = 2.0$ respectively. This dramatic
improvement implies that the error on the lowest resolutions $N = 32, 64$ in the
adiabatically well-balanced method is smaller than the errors of the standard
scheme on $N = 2048$ cells for $M = 0.9$ and is comparable to the error on $N =
512, 1024$ cell for $M = 2.0$.

\paragraph{Discontinuous wave propagation}
Next we consider a large perturbation $A = 100$. Due to the large amplitude of
the wave we use the classical \minmod. The solution develops a discontinuity
before the end of the simulation which is
\begin{align}
  \tend = 0.08 \min\left(
  0.6 \frac{R_1 - R_0}{c_{0} + \abs{v_{0}}},
  0.3 \frac{R_1 - R_0}{\max(c_{0} - \abs{v_{0}}, 10^{-10})}
  \right).
\end{align}

The result is shown in \cref{fig:bondi_large} and the corresponding convergence
table is \cref{tab:bondi_large}. The adiabatically well-balanced scheme is
robust in the presence of discontinuities and its performance is virtually
indistinguishable from the unbalanced scheme.

\begin{figure}[htbp]
  \begin{center}
    \includegraphics[width=0.32\columnwidth]{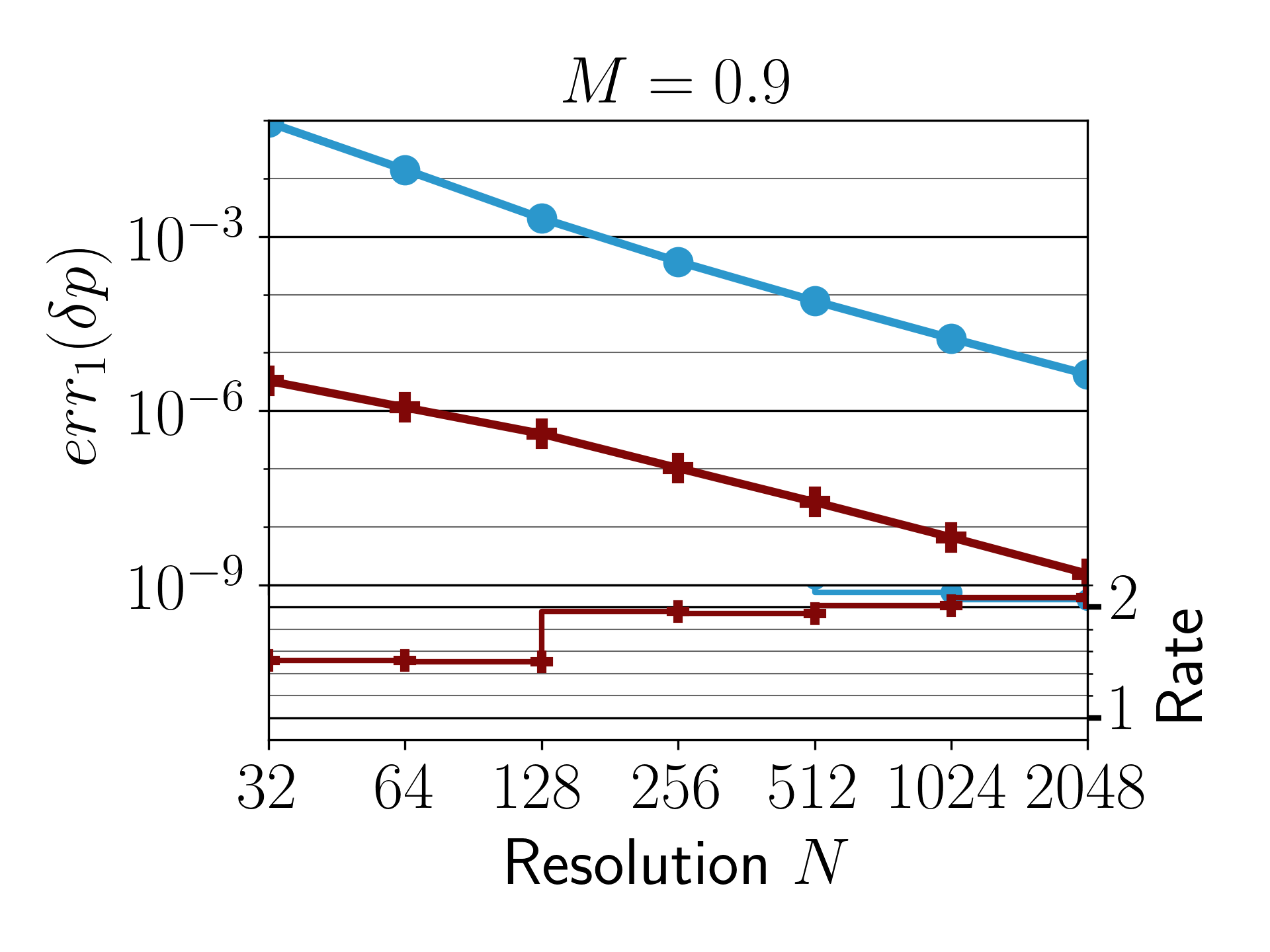}
    \includegraphics[width=0.32\columnwidth]{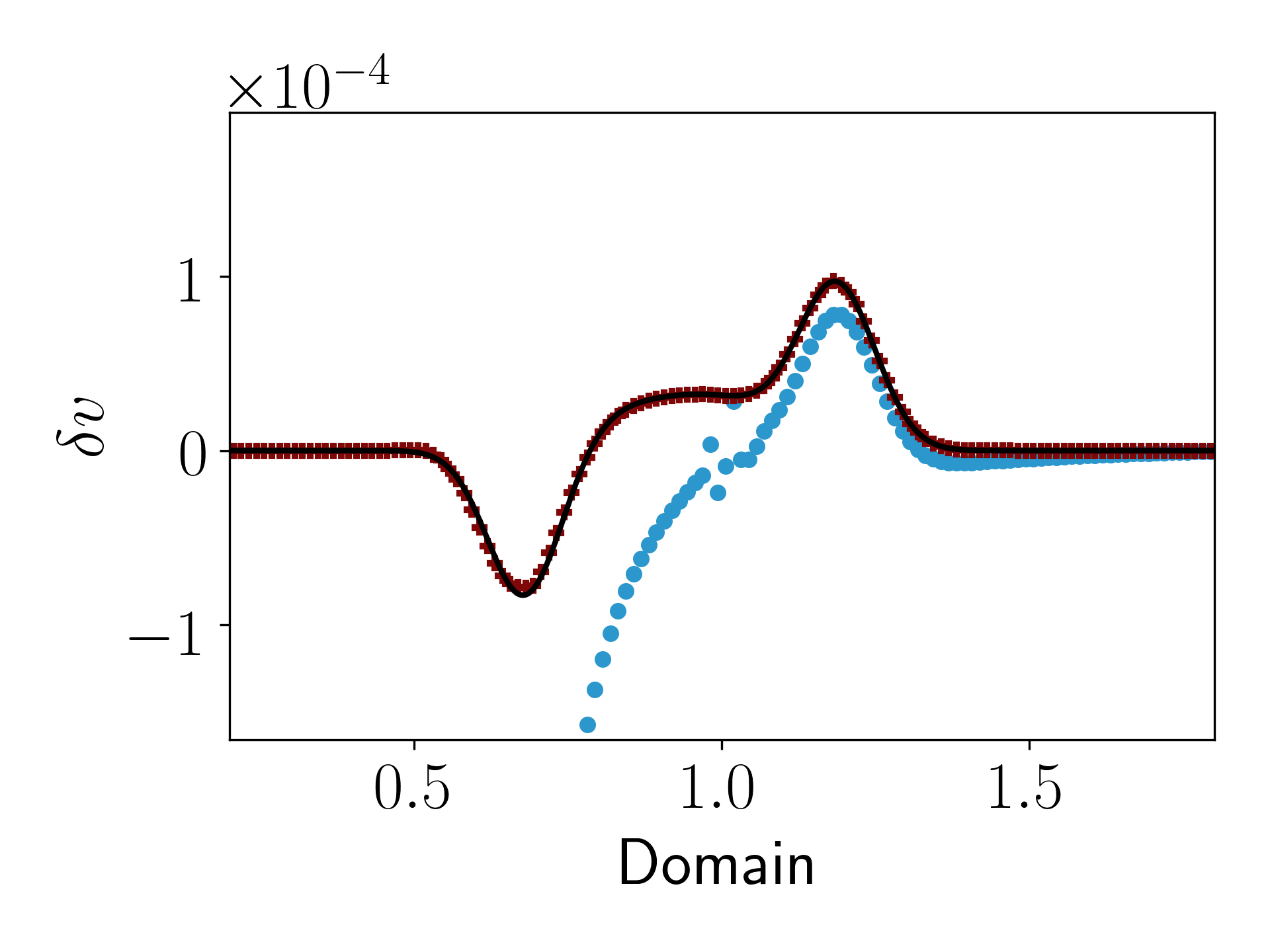}
    \includegraphics[width=0.32\columnwidth]{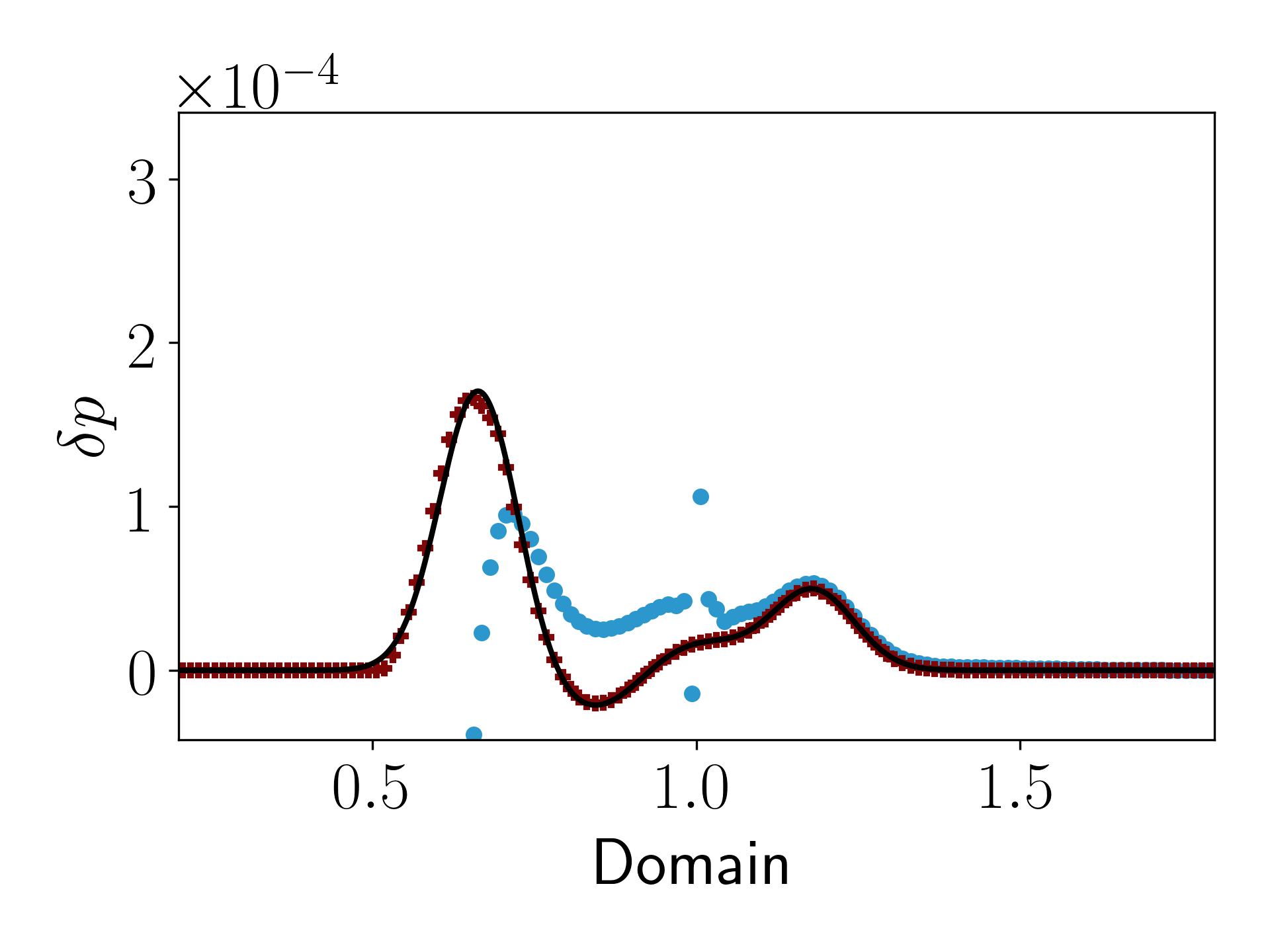}
  \end{center}
  \vspace{-0.2in}
  \caption{The left most column shows the convergence of the methods on the problem
    described in \cref{numex:bondi} with $A = 10^{-4}$. The upper part
    of the subplot shows the $L^1$-error of the pressure perturbation $\delta
    p$. The lower subplot shows the convergence rate between two consecutive
    levels of refinement. The middle column shows the velocity perturbation
    $\delta v$ and the right column shows the pressure perturbation $\delta p$. The
    Mach number at the reference point is $M = 0.9$. The scatter plots show the
    approximation with $N = 128$ cells at the final time described in the text. The
    unbalanced and adiabatically well-balanced scheme is shown in blue and red,
    respectively. The solid black line is the reference solution.}
  \label{fig:bondi_small_a}
\end{figure}

\begin{figure}[htbp]
  \begin{center}
    \includegraphics[width=0.32\columnwidth]{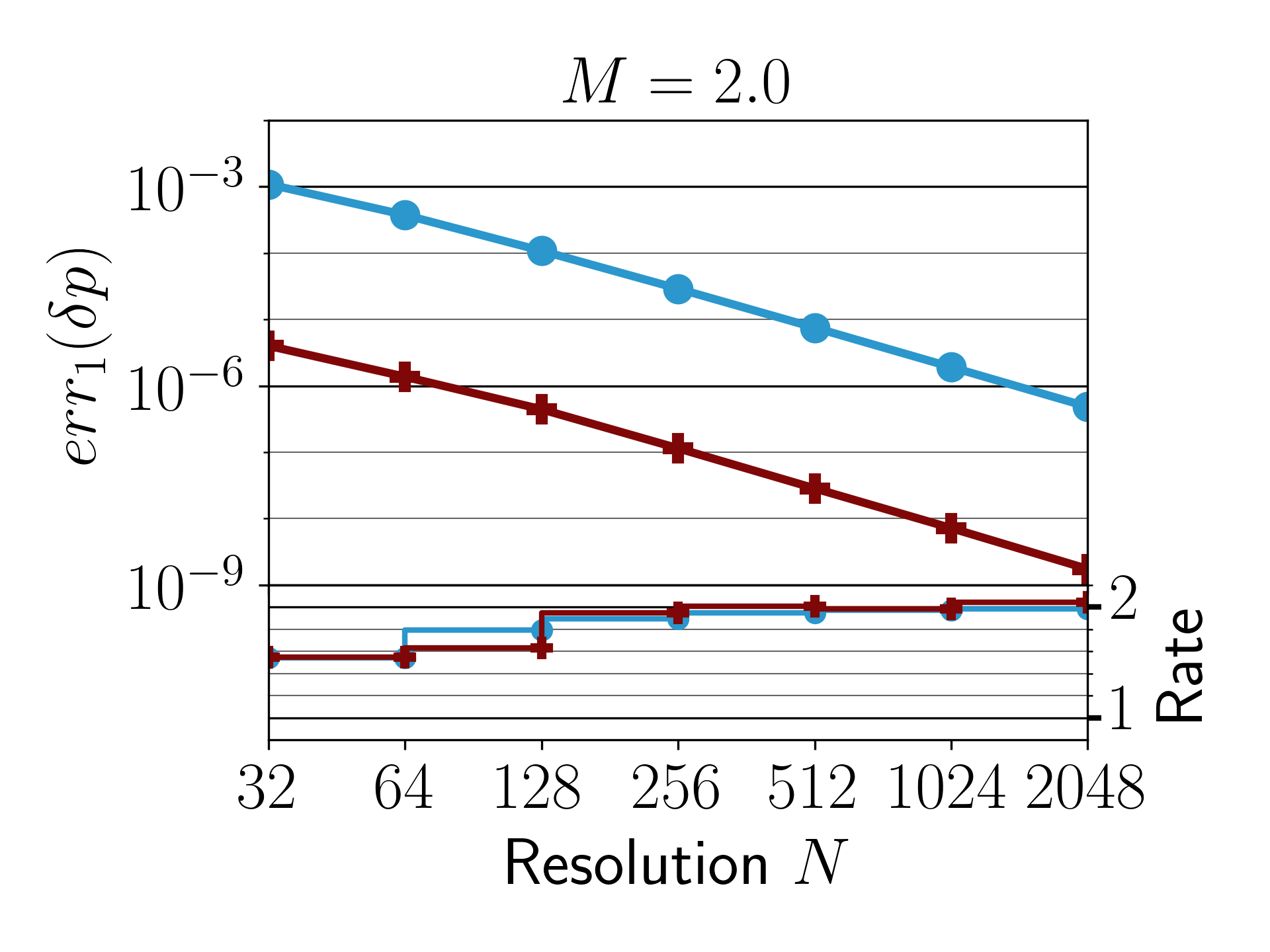}
    \includegraphics[width=0.32\columnwidth]{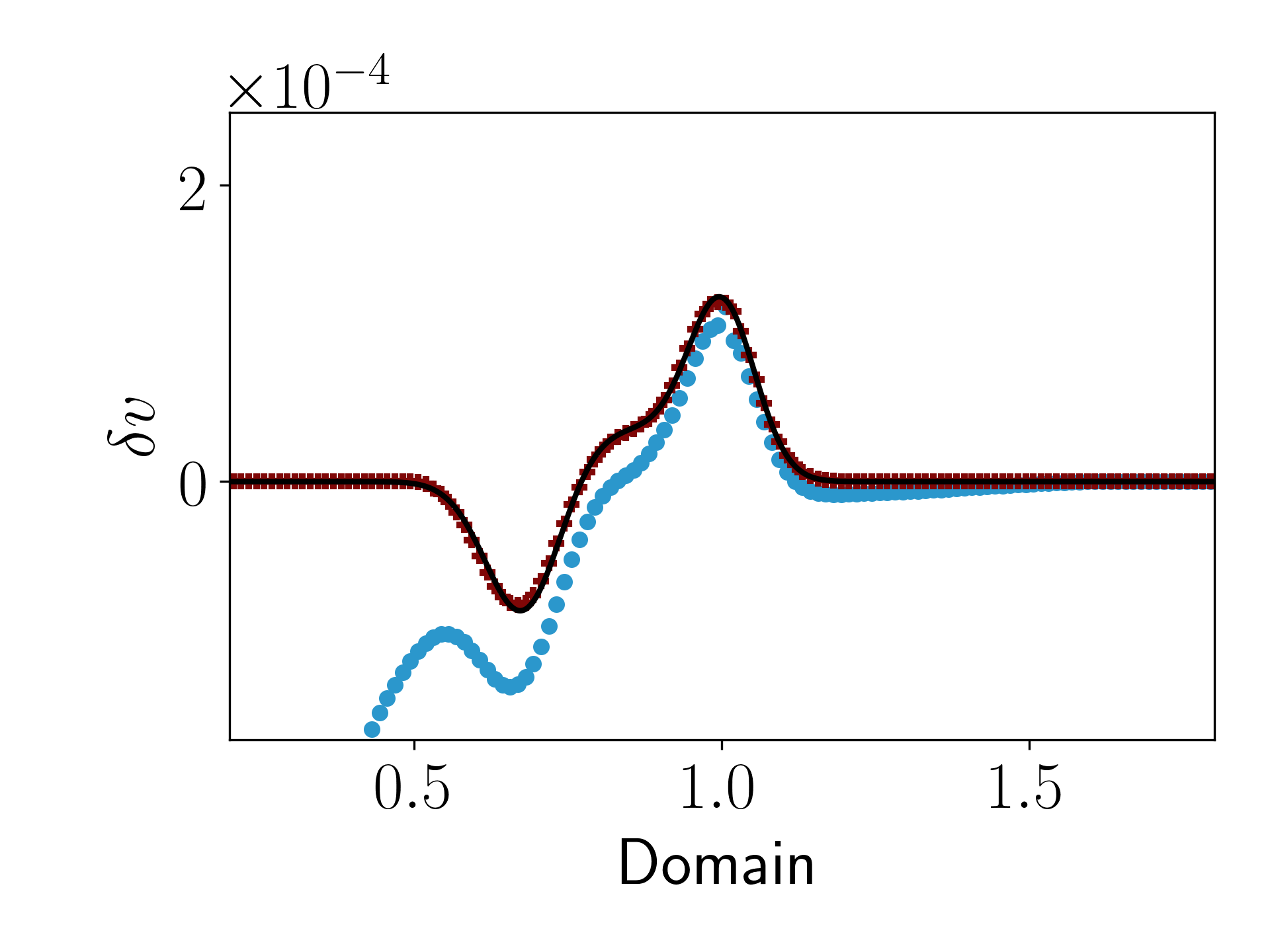}
    \includegraphics[width=0.32\columnwidth]{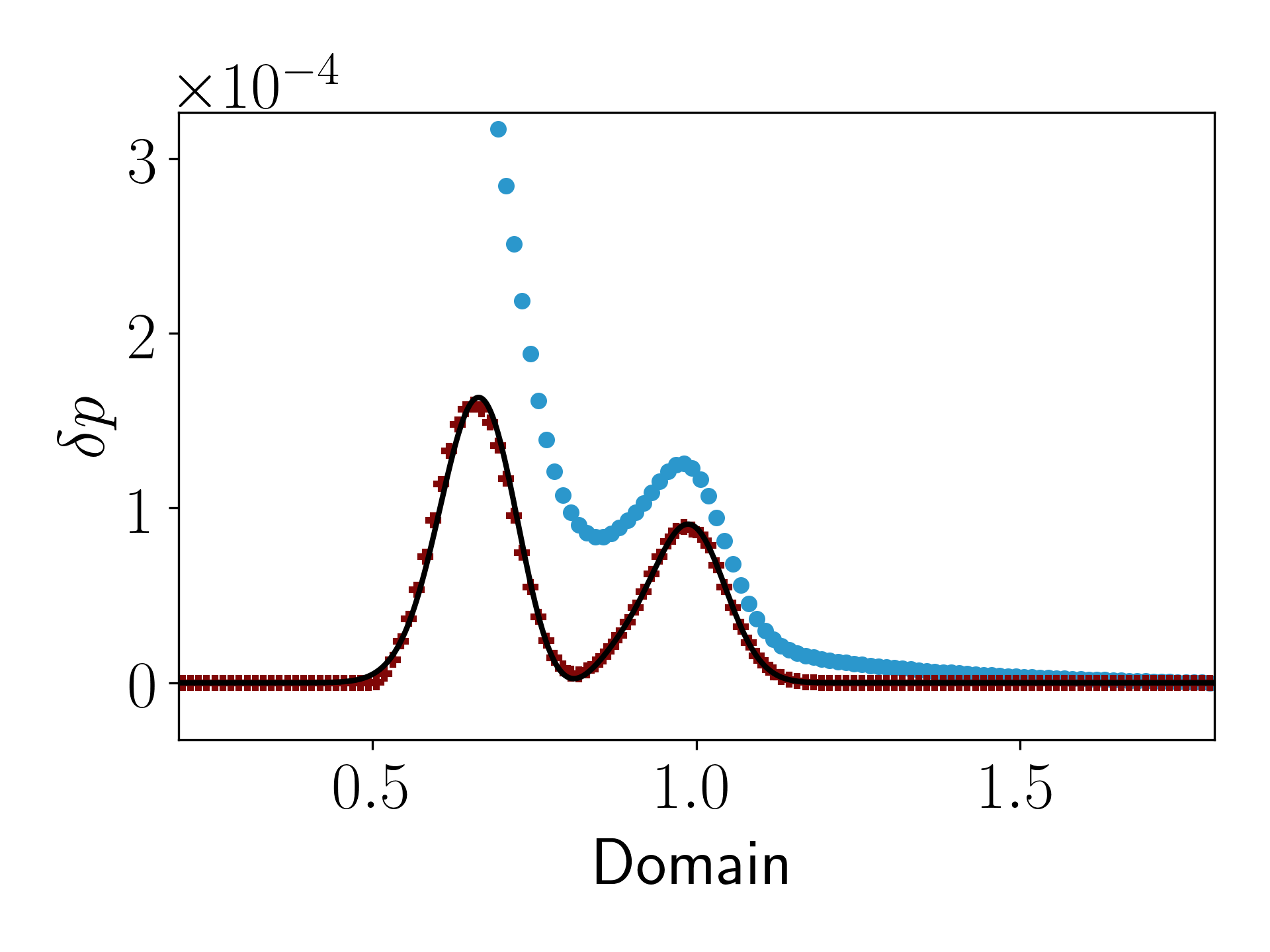}
  \end{center}
  \vspace{-0.2in}
  \caption{The figure shows the results for \cref{numex:bondi} with $A = 10^{-4}$
    and $M = 2$. Please refer to the caption of \cref{fig:bondi_small_a} for a
    detailed caption.}
  \label{fig:bondi_small_b}
\end{figure}

\begin{figure}[htbp]
  \begin{center}
    \includegraphics[width=0.32\columnwidth]{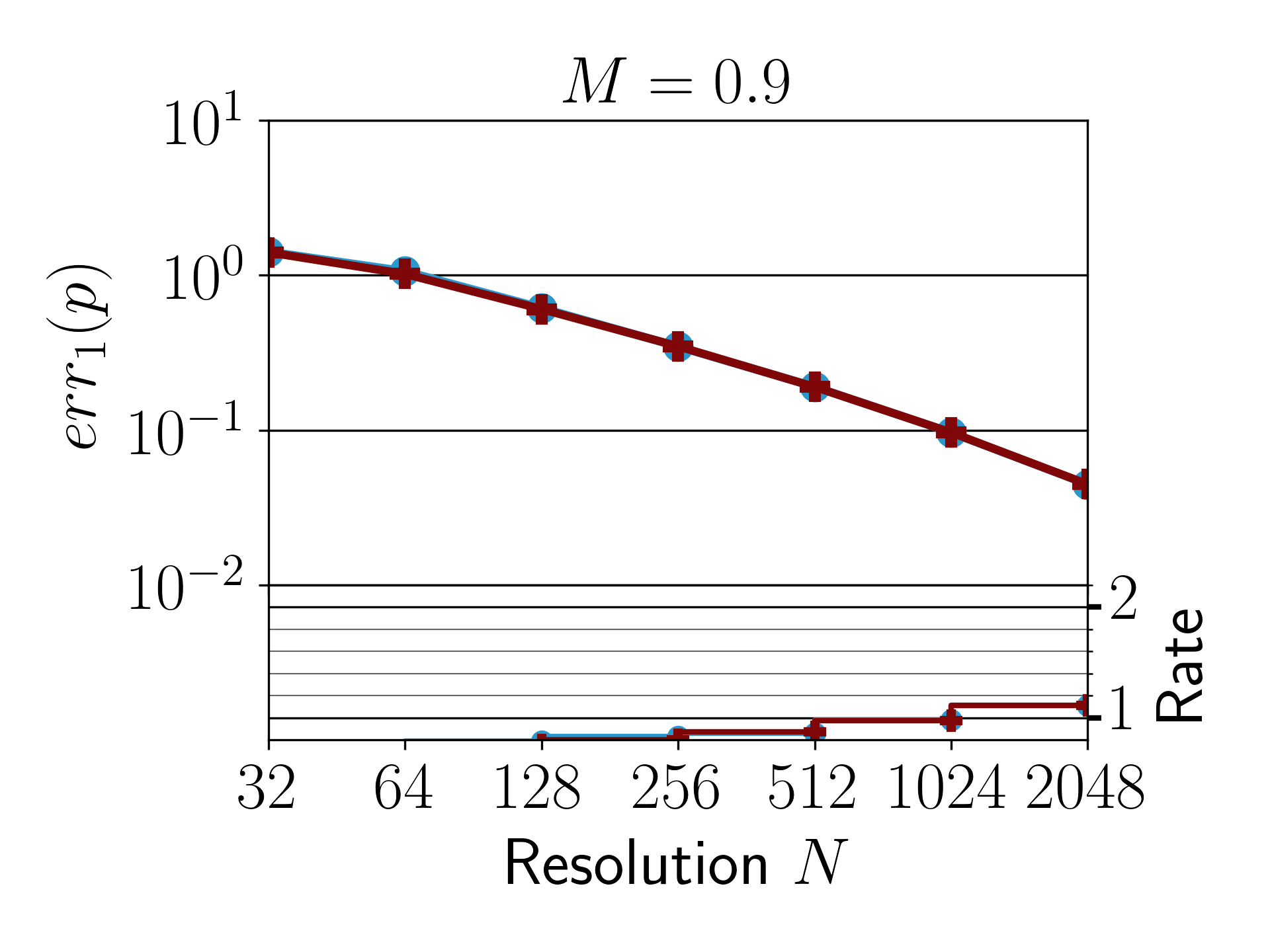}
    \includegraphics[width=0.32\columnwidth]{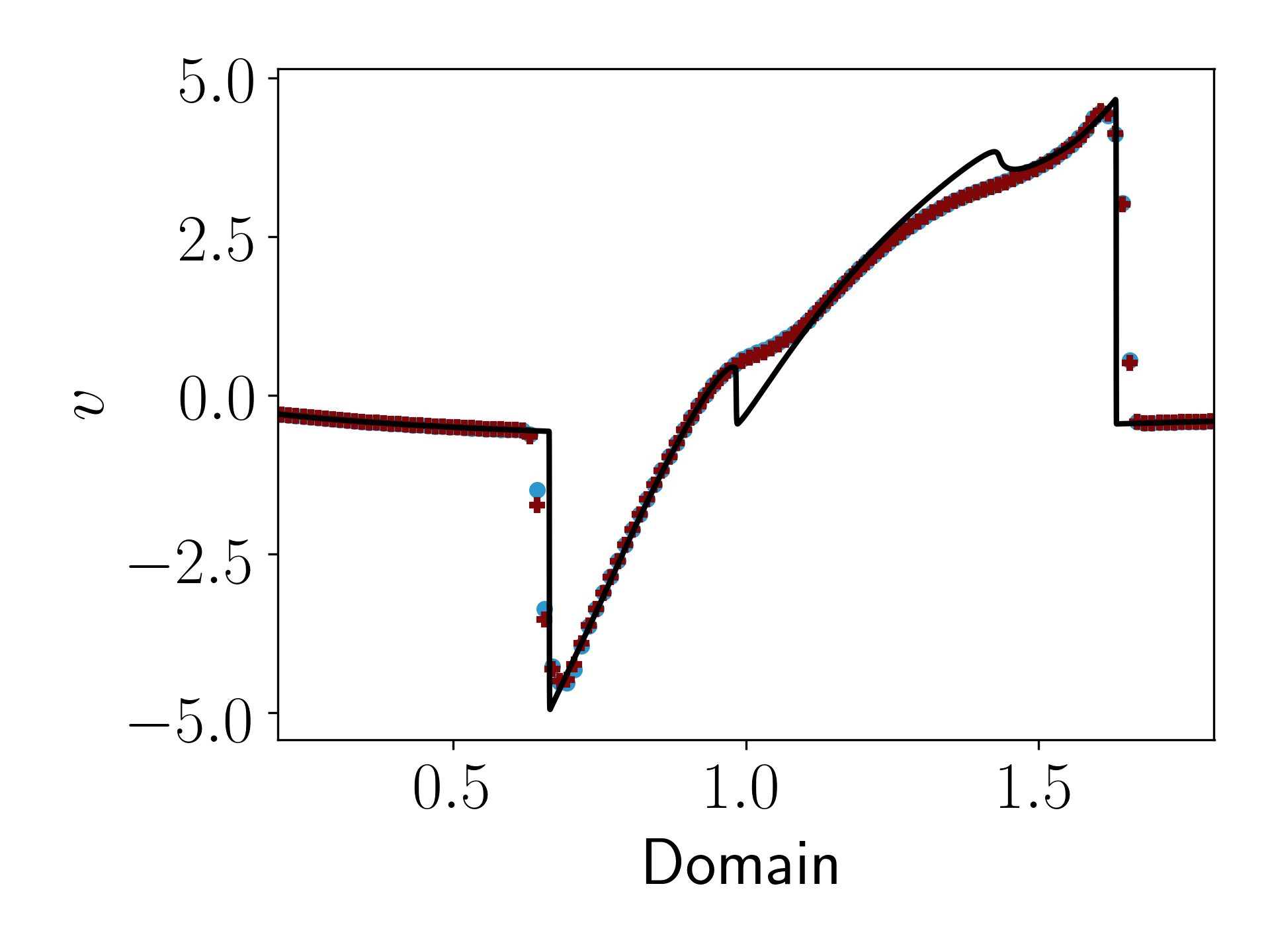}
    \includegraphics[width=0.32\columnwidth]{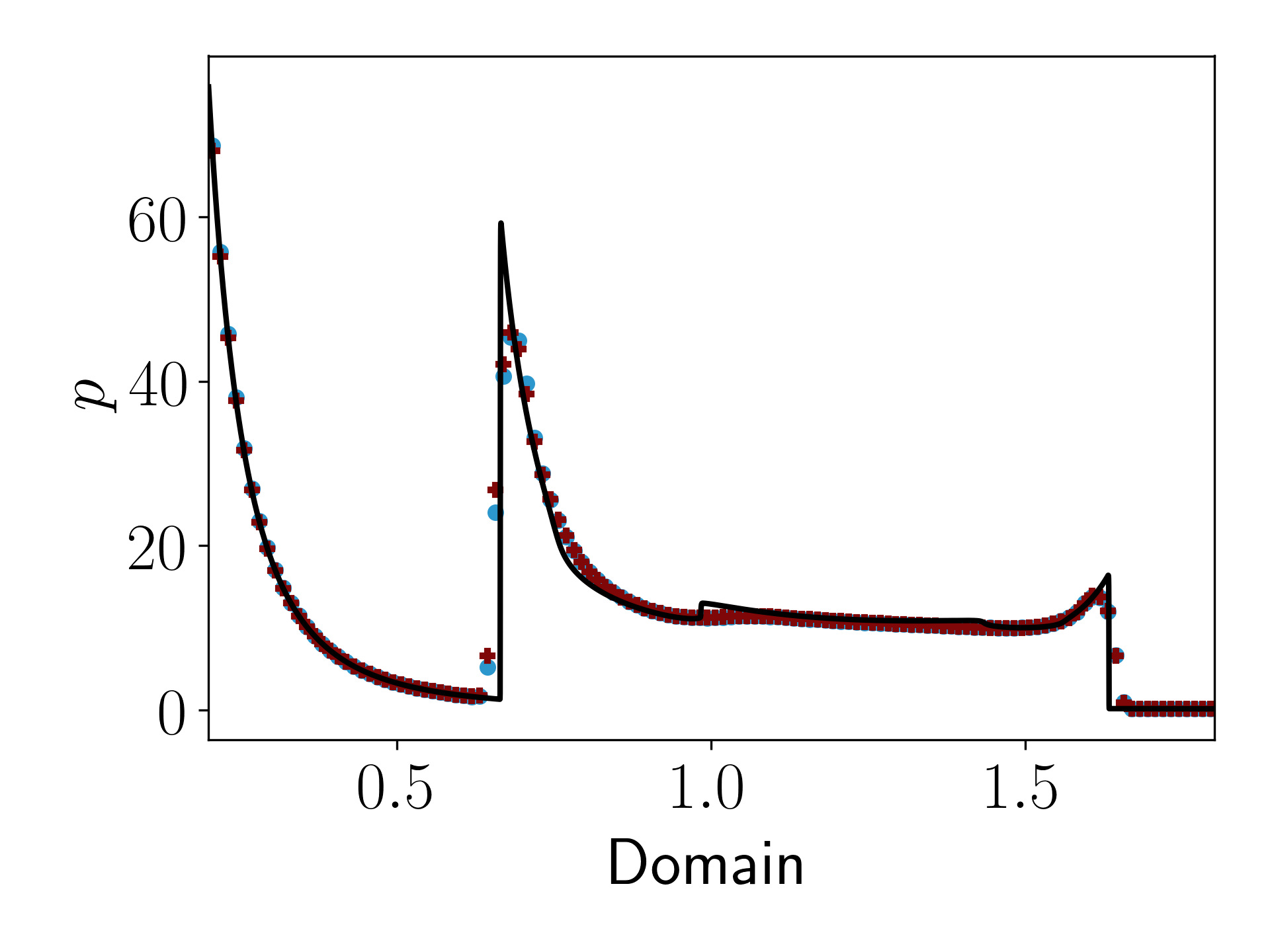}
    \includegraphics[width=0.32\columnwidth]{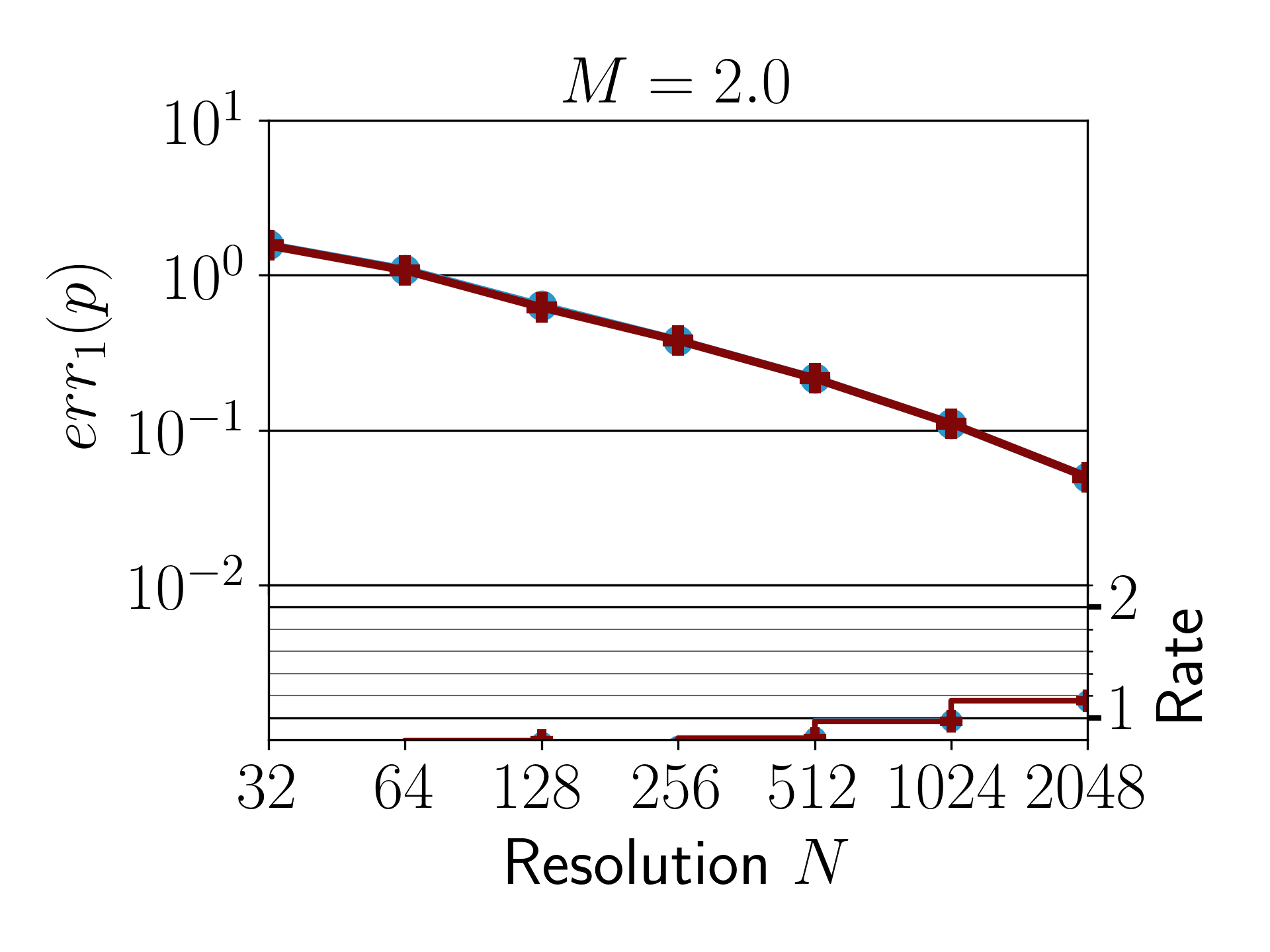}
    \includegraphics[width=0.32\columnwidth]{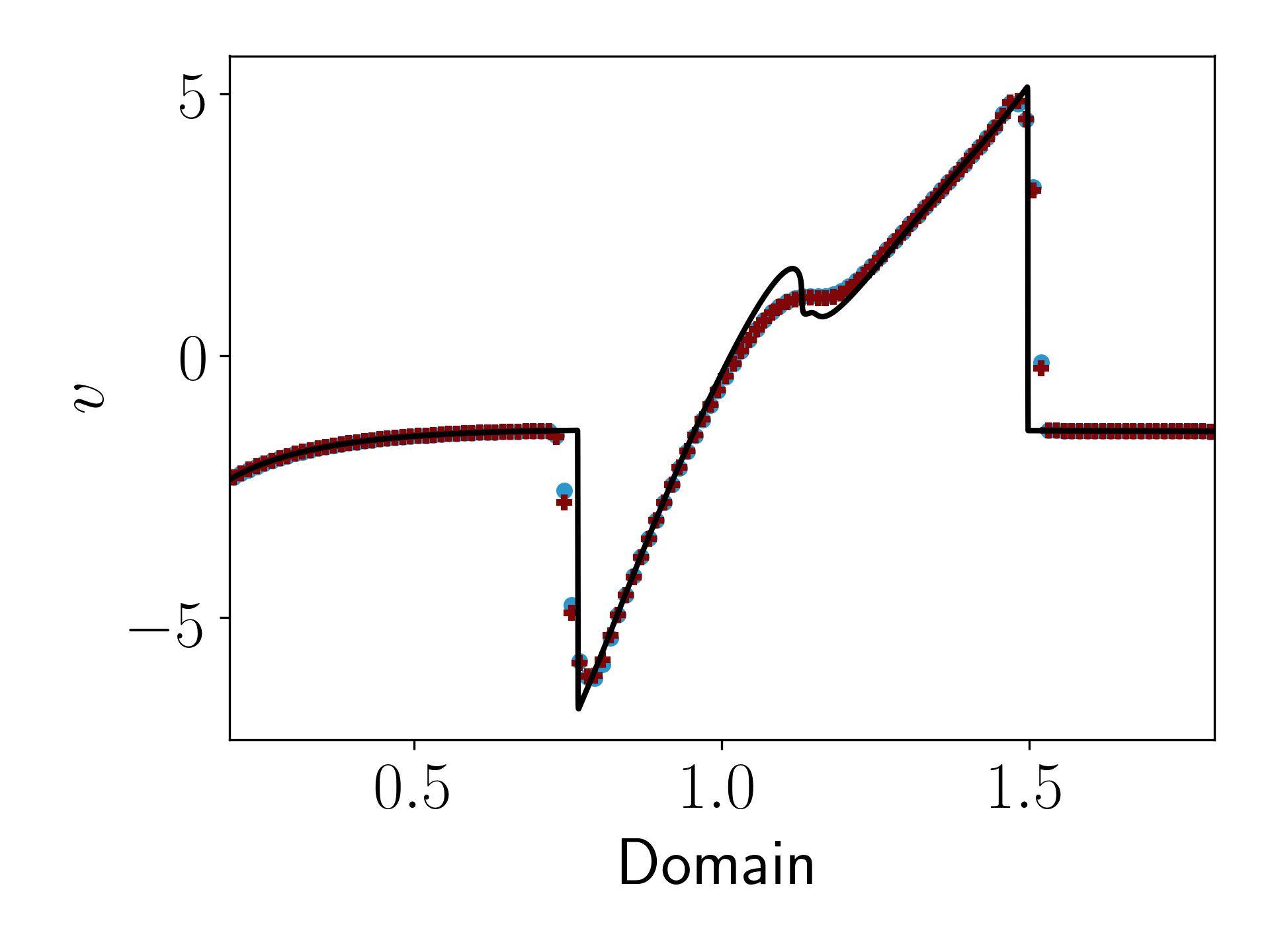}
    \includegraphics[width=0.32\columnwidth]{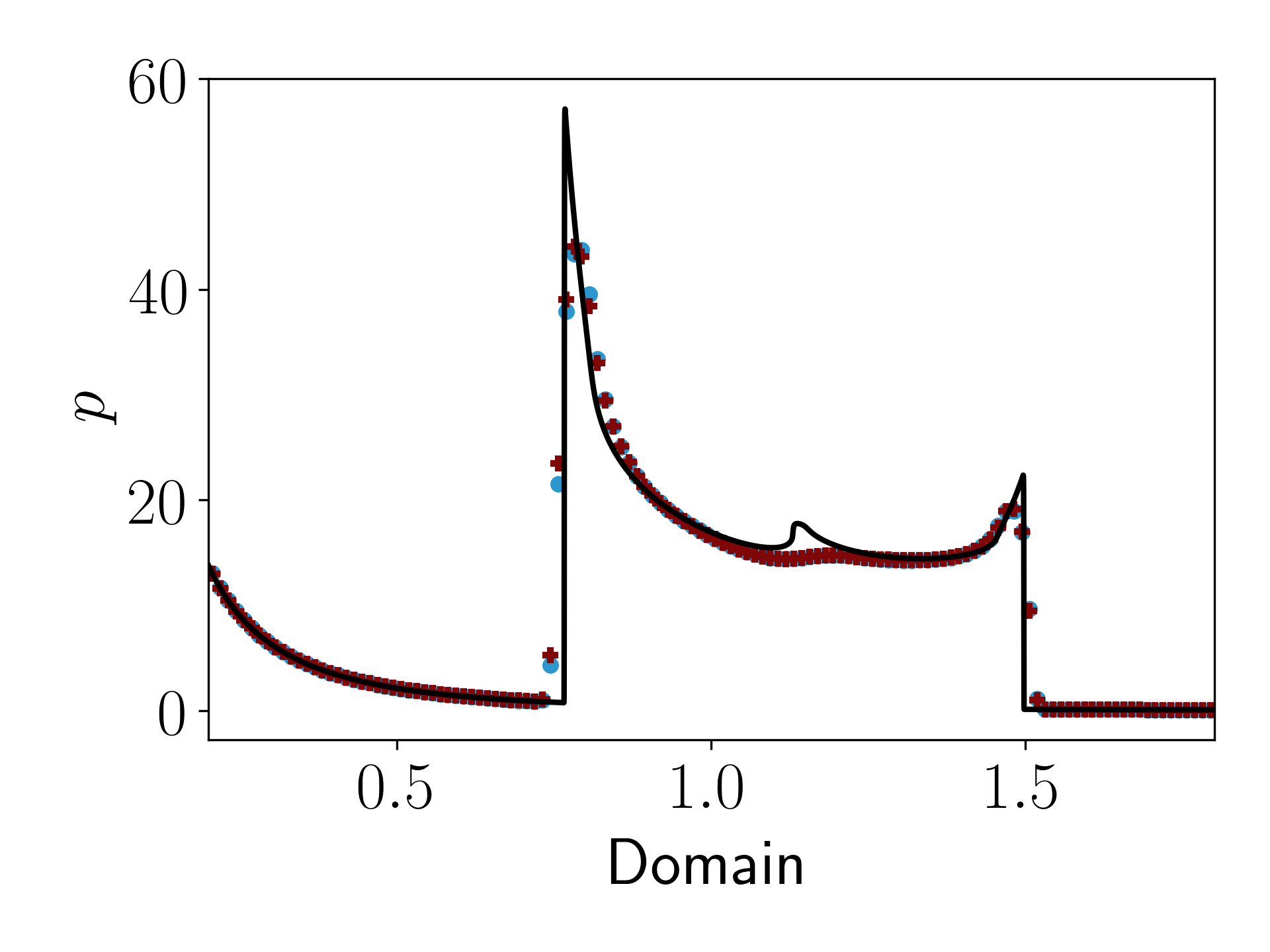}
  \end{center}
  \vspace{-0.2in}
  \caption{The left most column shows the convergence of the methods for the
    numerical experiment described in \cref{numex:bondi} with $A = 100$. The upper part
    of the subplot shows the $L^1$-error of the pressure $p$. The lower subplot
    shows the convergence rate between two consecutive levels of refinement. The
    middle column shows the velocity $v$ and the right column shows the pressure
    $p$. The Mach number at the reference point is $M = \num{0.9}$. The scatter
    plots show the approximation with $N = 128$ cells at the final time described in
    the text. The unbalanced and adiabatically well-balanced schemes are shown in blue and
    red, respectively. The solid black line is the reference solution.}
  \label{fig:bondi_large}
\end{figure}

\subsubsection{Discontinuous equilibrium}\label{numex:bondi_accretion_shock}
So far, the equilibrium was either subsonic everywhere or supersonic everywhere,
but never supersonic on one part of the domain and subsonic in another. In this
experiment we will study a flow which is supersonic in the upper half and
subsonic in the lower half of the domain. The two regions are joined by a
stationary shock. In this experiment we use the classical \minmod rather than
the monotonized centered limiter with the additional clipping of the pressure
and density. Furthermore, we employ the HLL(E) flux which is able to resolve
stationary discontinuities.

The shock is located at $r_0 = 1$. The pre-shock values are defined by
\begin{align}
  \rho_{0,1} = 1,\quad c^2_{0,1} = \frac{1}{2},\quad v_{0,1} = -M c_{0,1}
\end{align}
where $M = 1.2$ is the pre-shock Mach number. The conditions
immediately below the shock (e.g. post-shock) are given by the Rankine-Hugoniot
conditions for a stationary shock \cite{LandauLifshitz1987}
\begin{align}
  \rho_{0,2} = \rho_{0,1} \frac{(\adidx + 1) M^2}{(\adidx -1) M^2 + 2},
  \qquad
  p_{0,2} = p_{0,1} \left(\frac{2\adidx M^2}{\adidx + 1} - \frac{\adidx -1}{\adidx+1}\right),
  \qquad
  v_{0,2} = \frac{\rho_{0,1}}{\rho_{0,2}} v_{0,1}.
\end{align}
The initial conditions in the upper ($k = 1$) and lower ($k = 2$) halves are computed by
\begin{align}
  \left(\rho^0, v^0, p^0\right)(r)
  = \left(\rho_{eq,k}(r),\ v_{eq,k}(r),\ p_{eq,k}(r)\right)
\end{align}
where $\bw_{eq,k}$ is defined by the values $\rho_{0,k}$, $v_{0,k}$ and
$c_{0,k}$ in $r_0 = 1$. Therefore, the initial condition consists of joining
two, perturbed, equilibrium solutions by a stationary shock. Note that the
initial conditions in the upper half of the domain are the same as in
\cref{numex:bondi}. Furthermore, the jump is chosen to lie exactly in the middle
of the domain, hence if the number of cells is even, the shock is guaranteed to
be located at the boundary between two cells. The final time is $\tend = 2.0$
which corresponds to approximately two characteristic crossing times. 

The adiabatically well-balanced scheme preserves this discontinuous equilibrium
to machine precision, whereas the standard scheme accrues large errors. The
results for $N = 128$ is shown in \cref{fig:bondi_accretion_shock}.

\begin{figure}[htbp]
  \begin{center}
    \includegraphics[width=0.32\columnwidth]{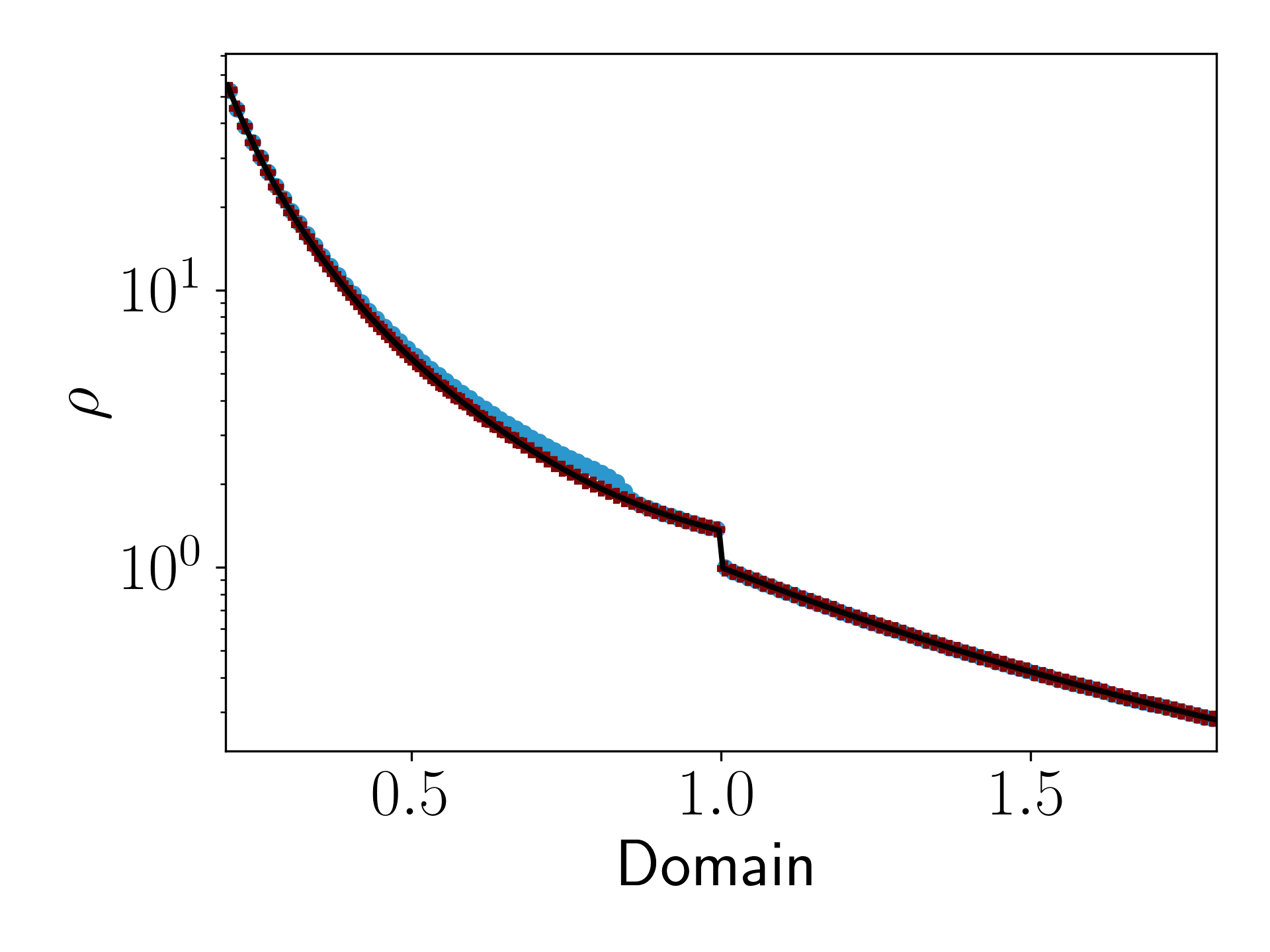}
    \includegraphics[width=0.32\columnwidth]{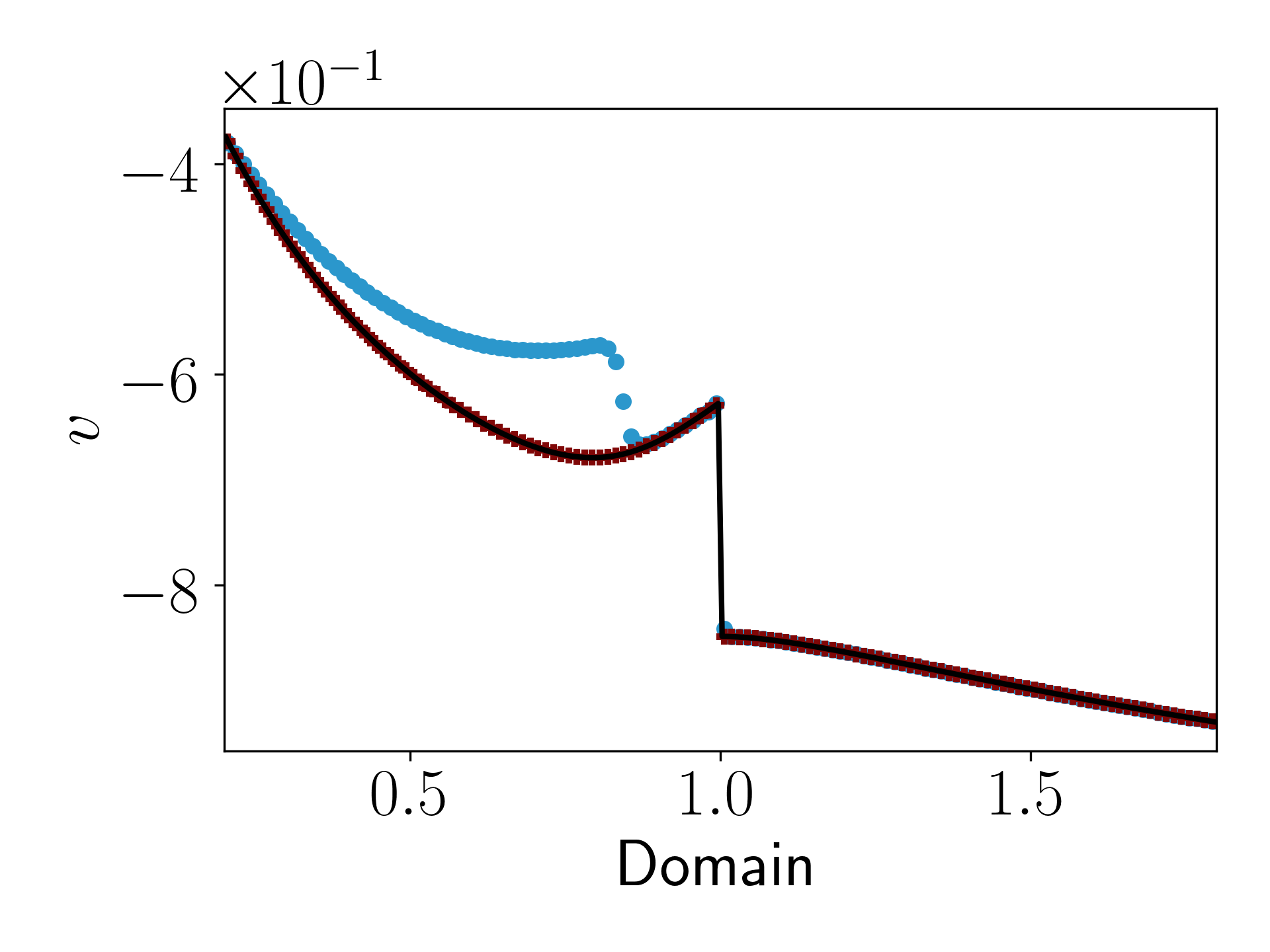}
    \includegraphics[width=0.32\columnwidth]{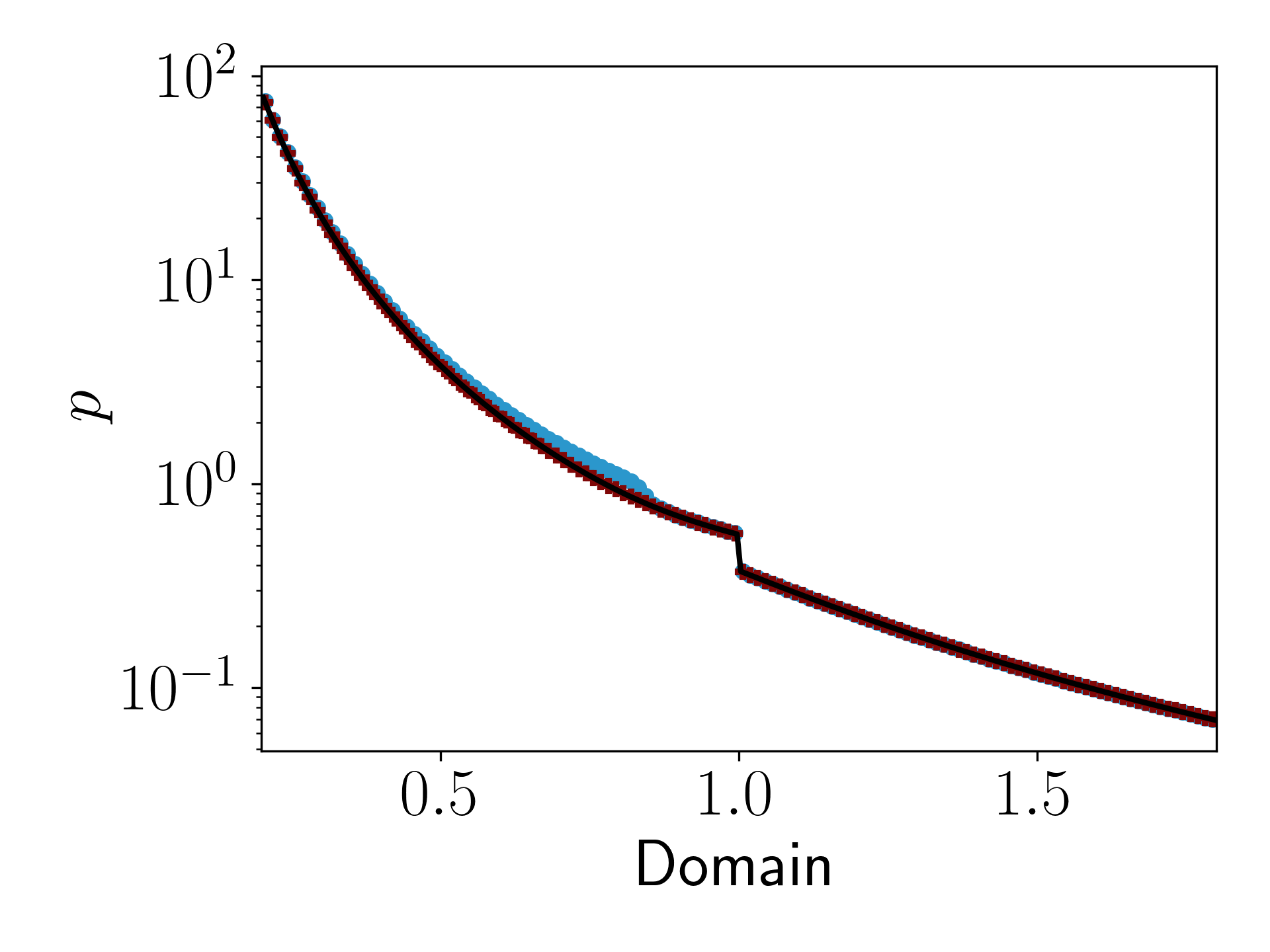}
    \\
    \includegraphics[width=0.32\columnwidth]{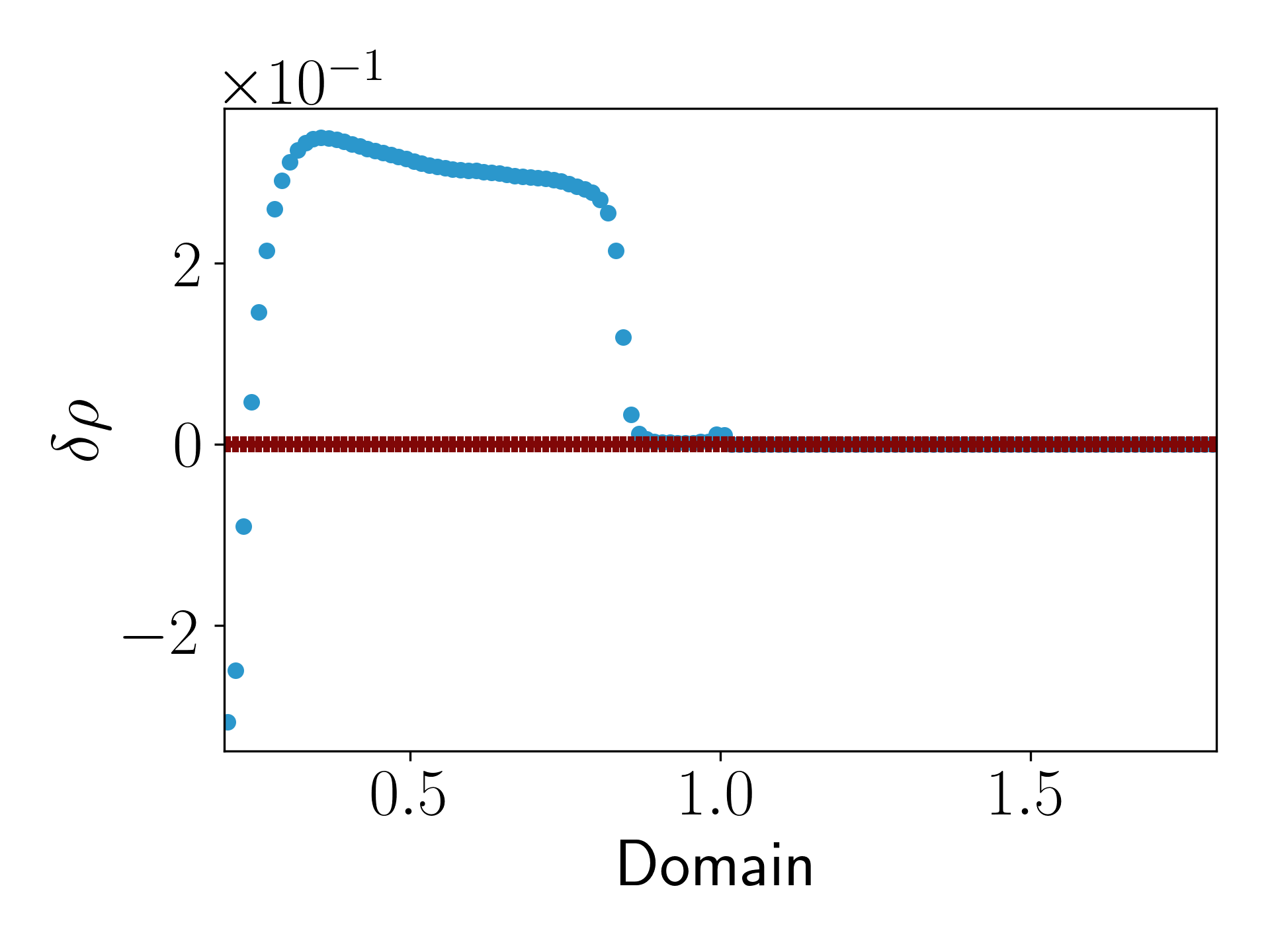}
    \includegraphics[width=0.32\columnwidth]{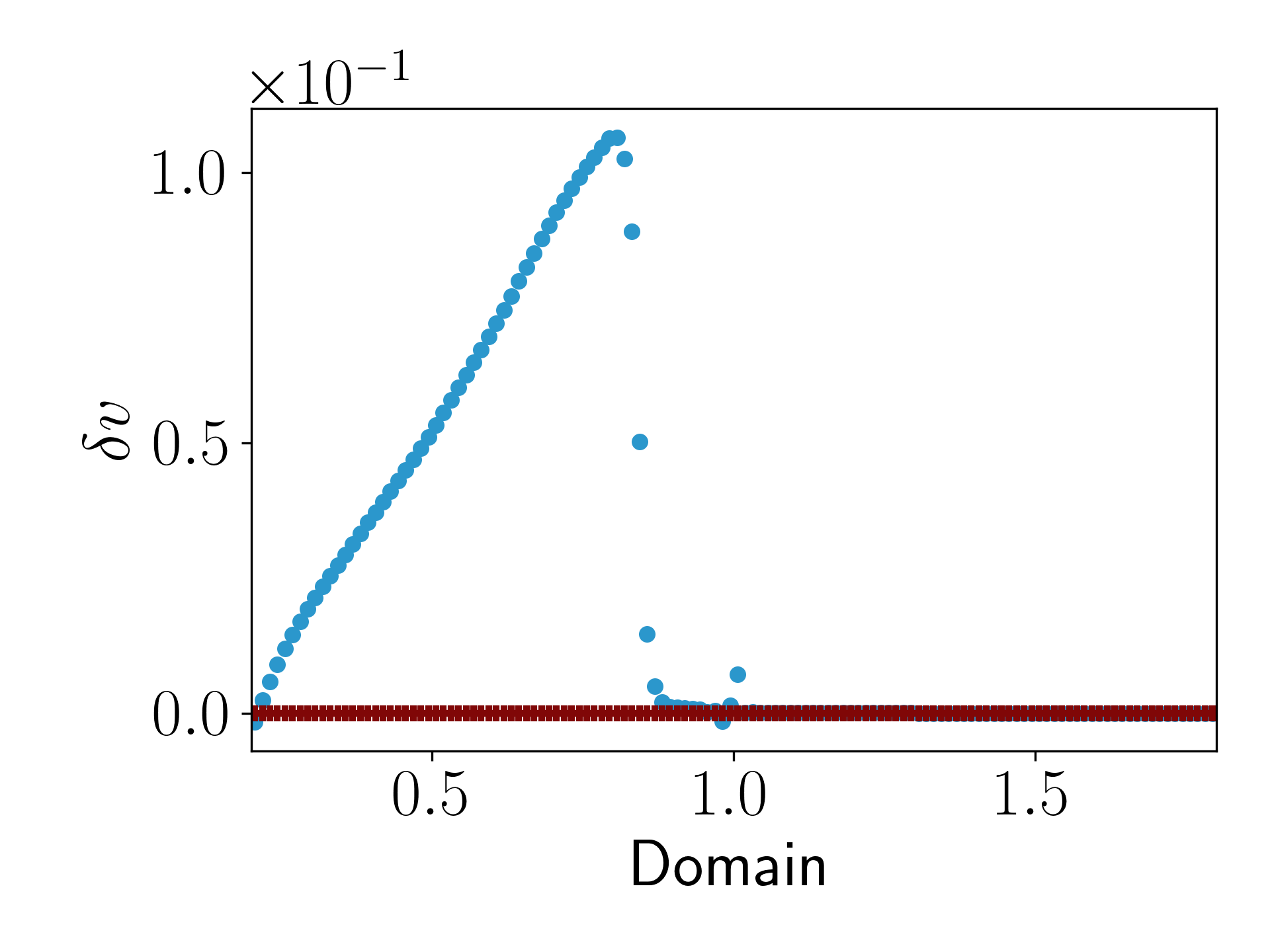}
    \includegraphics[width=0.32\columnwidth]{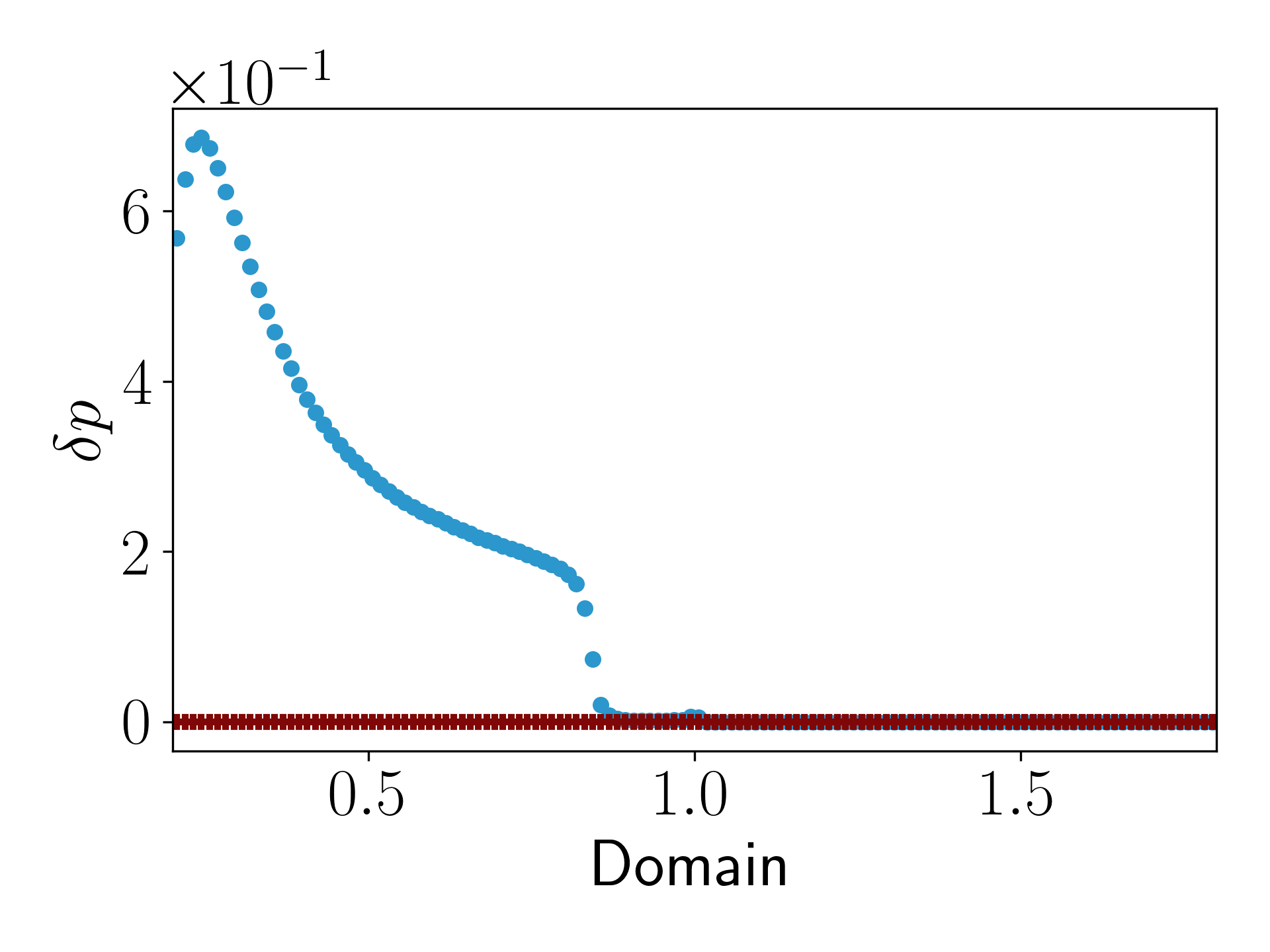}
  \end{center}
  \vspace{-0.2in}
  \caption{In the top row the density, velocity and pressure (left to right) of
    \cref{numex:bondi_accretion_shock}. The bottom row shows the error of those
    variables in the same order. All plots show an approximation of the solution
    with $N = 128$ cells after two characteristic timescales. The unbalanced and
    adiabatically well-balanced scheme is shown in blue and red, respectively.}
  \label{fig:bondi_accretion_shock}
\end{figure}

\subsection{Stellar accretion}
\label{subsec:numex_stella}
As a final test, we present the performance of our well-balanced schemes
involving a complex multi-physics \eos.
The test consists of the simulation of an astrophysical accretion scenario.
In particular, we consider the accretion onto a compact object as it is
typically encountered in a core-collapse.
Such an event marks the death of a massive star and its transition to a
compact object such as a neutron star or a black hole
\cite{ShapiroTeukolsky1983,Arnett1996,KippenhahnEtAl2012}.
In a core-collapse supernova, a standing accretion shock arises as an
expanding shock wave, generated by the sudden halt of the collapse of the
core due to the stiffening of the \eos above super-nuclear density, stalls
and remains nearly stationary for an extended period of time.
During this period, the shock is revived by some (yet unknown in detail)
combination of factors including neutrino heating, convection, rotation, and
magnetic fields, triggering a formidable explosion.
The standing accretion shock is subject to a dynamical instability commonly
known as the standing accretion shock instability (SASI)
\cite{BlondinEtAl2003}.
The latter may have deep implications on the explosion mechanism itself, ejecta
morphology, and pulsar kicks and spins (see e.g. Foglizzo et al.
\cite{FoglizzoEtAl2015} and references therein).

A typical radial profile is shown in \cref{fig:numex_stella_0010}
(dashed lines).
The radial profile was obtained from a simulation as described by Perego
\etal\ \cite{PeregoEtAl2016}.
The figure shows the standing accretion shock.
Above the shock, matter is falling in supersonically.
Below the shock, matter is falling in subsonically and pilling up onto the
nascent proto-neutron star.
We consider a highly simplified setup similar to
\cite{YamasakiFoglizzo2008,KazeroniEtAl2016}, which studies the dynamics of a
standing accretion shock around a proto-neutron star restricted to the
equatorial plane using cylindrical coordinates.

The computational domain spans $r \in [50, 450]$ km in radius and the full
angular realm $\varphi \in [0, 2 \pi]$.
The accreting matter is modeled by a mixture of (photon) radiation, nuclei,
electrons and positrons as provided by the publicly available Helmholtz \eos
of Timmes and Swesty \cite{Timmes2000}.
The gravitational attraction of the proto-neutron star is modeled by a
point mass
\begin{equation}
\label{eq:numex_stella_0010}
  \phi(r) = - \frac{G M}{r},
\end{equation}
where $G$ is the gravitational constant and we set $M = 1.3 M_{\odot}$
(solar masses).
We set the shock radius to $r_{\mathrm{sh}} = 150$ km and the pre-shock
conditions as
\begin{equation}
\label{eq:numex_stella_0020}
  \begin{aligned}
    \rho_{\mathrm{pre}} & = \phantom{-}
                              2.0872995 \times 10^{ 8} ~ \mathrm{g/cm}^3 , \\
    v_{r,\mathrm{pre}}  & = - 4.5618302 \times 10^{ 4} ~ \mathrm{km/s}   , \\
    v_{\varphi,\mathrm{pre}} & = \phantom{-} 0 ~ \mathrm{km/s}           , \\
    p_{\mathrm{pre}}    & = \phantom{-}
                              1.1538646 \times 10^{26} ~ \mathrm{erg/cm}^3
    .
  \end{aligned}
\end{equation}
The post-shock conditions are obtained from the Rankine-Hugoniot relations as
\begin{equation}
\label{eq:numex_stella_0030}
  \begin{aligned}
    \rho_{\mathrm{post}} & = \phantom{-}
                               1.1989635 \times 10^{ 9} ~ \mathrm{g/cm}^3 , \\
    v_{r,\mathrm{post}}  & = - 7.9417814 \times 10^{ 3} ~ \mathrm{km/s}   , \\
    v_{\varphi,\mathrm{pre}} & = \phantom{-} 0 ~ \mathrm{km/s}           , \\
    p_{\mathrm{post}}    & = \phantom{-}
                               3.7029093 \times 10^{27} ~ \mathrm{erg/cm}^3
    .
  \end{aligned}
\end{equation}
In the whole domain, we assume the matter to be composed of nickel isotope
$^{56}$Ni.
By numerically solving for the steady state conditions, one obtains the
solid line profiles in \cref{fig:numex_stella_0010}.
As apparent from the figure, the simplified setup matches the more complex
model quite well (despite the different \eos and geometry).

Next, we give some implementation details related to the complex multi-physics
\eos.
In the publicly available Helmholtz \eos, the photons are treated as black
body radiation in local thermal equilibrium and the nuclei by an ideal gas
law.
The electrons and positrons are treated in a tabular manner allowing speeds
arbitrarily close to the causal limits and arbitrary degree of degeneracy
with a thermodynamically consistent interpolation method.
The \eos interface provides all the relevant thermodynamic quantities given
the temperature $T$, density $\rho$ and composition.
The composition is specified by the triplet $(X_{i}, A_{i}, Z_{i})$ for each
isotope $i$, where $X_{i}$ is the mass fraction, $A_{i}$ the mass number
and $Z_{i}$ the atomic number.
In the present problem setup, we thus have only one isotope with
$(X_{1} = 1, A_{1} = 56, Z_{1} = 28)$.
Because we evolve the Euler equations in conservative form, we have to
determine the temperature corresponding to a given density $\rho$ and
specific internal energy $e$.
Similarly in the local equilibrium reconstruction, we have to determine the
temperature $T$ and the density $\rho$ corresponding to a given specific
enthalpy $h$ and entropy $s$.
This is implemented with robust root finding algorithms combining Newton's
method for speed and the bisection method for robustness
(see e.g. Press \etal \cite{Press1993} for details.).

In the following, we show the performance of the second-order adiabatically
well-balanced scheme and compare it to a standard second-order unbalanced
scheme.
Both schemes have been modified in a standard and identical fashion to cope
with the cylindrical geometry.
The standard unbalanced scheme is obtained by simply disabling the
well-balanced reconstruction and source term discretization of the
well-balanced scheme.
Both schemes use the HLL Riemann solver with simple wave speed estimates
from the fastest left/right traveling characteristic speeds at the
cell interface (see e.g. Toro \cite{Toro2009}).
Note that these speed estimates will not exactly resolve isolated shocks.
Therefore, we divide the full test problem in a subsonic $r \in [50, 150]$ km
and a supersonic $r \in [150, 450]$ part when studying the schemes'
well-balanced property in \cref{subsubsec:numex_stella_wb} and the wave
propagation properties in \cref{subsubsec:numex_stella_pert}.
In \cref{subsubsec:numex_stella_sasi}, we show the performance of the schemes
on the full problem.
In all the tests, the computational domain spans the full angular realm
$\varphi \in [0, 2 \pi]$.

\begin{figure}[hbt]
  \begin{center}
    \includegraphics[width=0.5\textwidth]{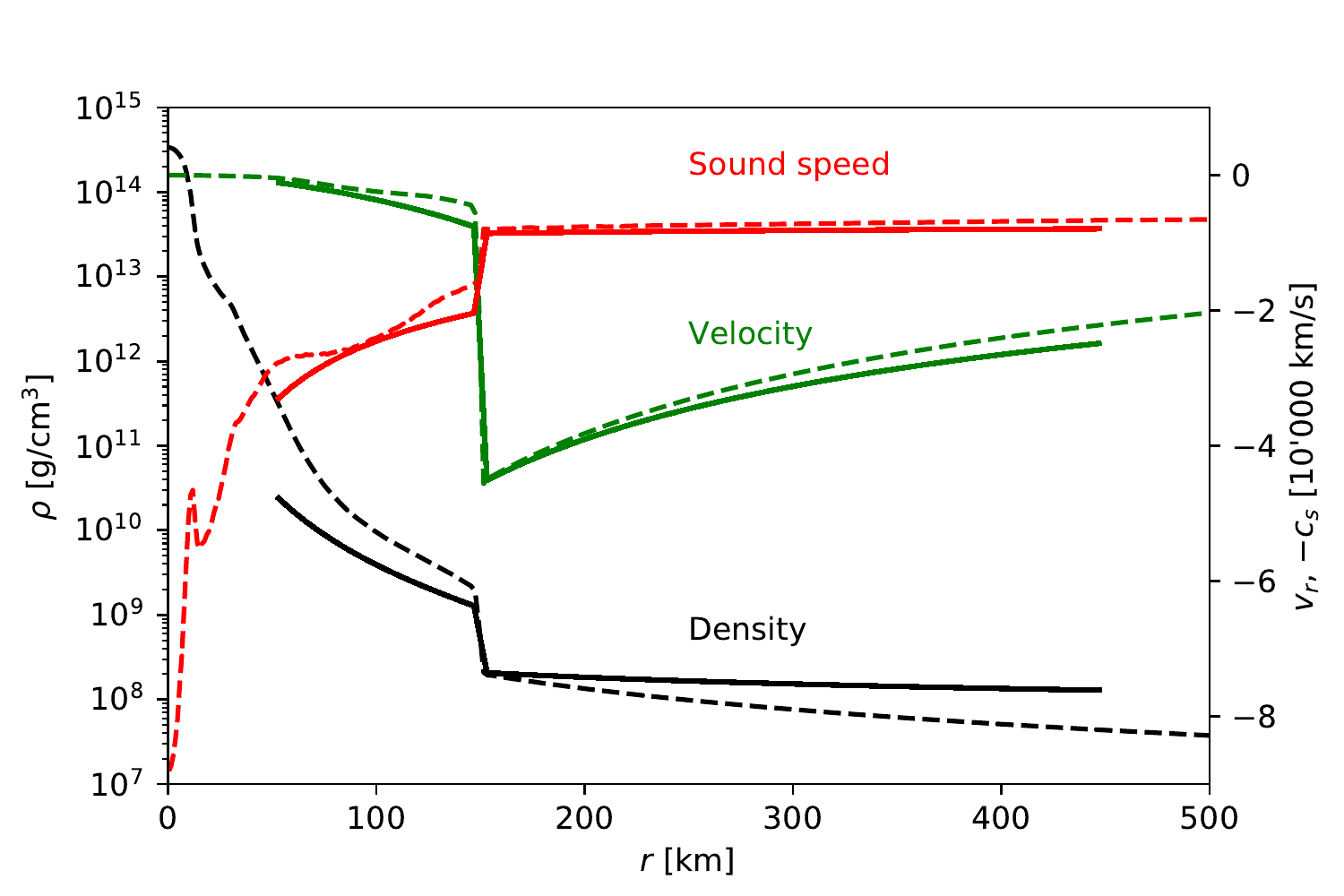}
  \end{center}
  \caption{Radial profiles of density (black lines), radial velocity (green
           lines) and sound speed (red lines) for the stellar accretion test
           problem \cref{subsec:numex_stella}.
           The dashed lines show the profiles from a core-collapse simulation
           as described by Perego \etal\ \cite{PeregoEtAl2016}.
           The solid lines show the highly simplified setup considered in the
           stellar accretion test problem.}
  \label{fig:numex_stella_0010}
\end{figure}

\subsubsection{Well-balanced property}
\label{subsubsec:numex_stella_wb}
We begin by numerically verifying the well-balancing properties of the
developed scheme.
For this purpose, we evolve the subsonic and supersonic accretion steady
states for two characteristic times $\tend = 2 \sndxtime$, and several
resolutions in radial and angular directions:
$(N_{r}, N_{\varphi}) = (32, 64)$, $(64, 128)$, $(128, 256)$, $(256, 512)$.
The domain boundaries in radial direction are simply kept frozen in time at
the equilibrium state.
The relative equilibrium errors in density, radial velocity and pressure are
displayed in \cref{tab:numex_stella_wb_0010} for the subsonic and
\cref{tab:numex_stella_wb_0020} for the supersonic parts.
We observe that the well-balanced scheme produces errors on the order of the
precision with which the local equilibrium is numerically solved
($tol = 10^{-13}$).
In contrast, the unbalanced scheme suffers from comparatively large errors
and is unable to maintain the steady state.

\begin{table}[htb!]
  \centering
  \begin{tabular}{| r || c | c | c |}
    \hline
    $(N_{r}, N_{\varphi})$ & $relerr_{1}(\rho)$
                           & $relerr_{1}(v_{r})$
                           & $relerr_{1}(p)$                 \\
    \hline \hline
      ( 32,  64) & 2.33E-02 / 4.13E-13 & 1.80E-01 / 9.95E-13
                 & 2.59E-02 / 4.07E-13                       \\
      ( 64, 128) & 4.14E-03 / 7.45E-13 & 3.61E-02 / 1.27E-11
                 & 4.46E-03 / 6.32E-13                       \\
      (128, 256) & 7.81E-04 / 2.17E-12 & 8.89E-03 / 4.99E-11
                 & 8.37E-04 / 2.02E-12                       \\
      (256, 512) & 1.86E-04 / 5.85E-12 & 1.74E-03 / 9.09E-11
                 & 1.91E-04 / 5.31E-12                       \\
    \hline
    Order        &     2.33 / -        &     2.21 / -
                 &     2.37 / -                              \\
    \hline
  \end{tabular}
  \caption{Relative equilibrium error in density, radial velocity and
           pressure for the subsonic accretion steady state computed with
           the un-/well-balanced  second-order schemes for two characteristic
           times $\tend = 2 \sndxtime
                  \approx 6.872778 \times 10^{-3}$ s.}
  \label{tab:numex_stella_wb_0010}
\end{table}

\begin{table}[htb!]
  \centering
  \begin{tabular}{| r || c | c | c |}
    \hline
    $(N_{r}, N_{\varphi})$ & $relerr_{1}(\rho)$
                           & $relerr_{1}(v_{r})$
                           & $relerr_{1}(p)$                 \\
    \hline \hline
      ( 32,  64) & 1.58E-04 / 2.68E-15 & 3.36E-04 / 2.94E-15
                 & 2.13E-04 / 1.22E-14                       \\
      ( 64, 128) & 3.78E-05 / 1.87E-15 & 8.42E-05 / 1.32E-15
                 & 4.81E-05 / 9.95E-15                       \\
      (128, 256) & 9.24E-06 / 1.25E-12 & 2.11E-05 / 1.25E-12
                 & 1.13E-05 / 8.15E-12                       \\
      (256, 512) & 2.28E-06 / 8.30E-13 & 5.29E-06 / 8.39E-13
                 & 2.75E-06 / 5.84E-12                       \\
    \hline
    Order        &     2.04 / -        &     2.00 / -
                 &     2.09 / -                              \\
    \hline
  \end{tabular}
  \caption{Relative equilibrium error in density, radial velocity and
           pressure for the supersonic accretion steady state computed with
           the un-/well-balanced second-order schemes for two characteristic
           times  $\tend = 2 \sndxtime 
           \approx 1.506179 \times 10^{-2}$ s.}
  \label{tab:numex_stella_wb_0020}
\end{table}

\subsubsection{Small and large amplitude perturbations}
\label{subsubsec:numex_stella_pert}
To verify the capability of the schemes to evolve perturbation on top of the
steady state, we add five Gaussian hump density perturbations to the flow as
\begin{equation}
\label{eq:numex_stella_pert_0010}
  \rho(r, \varphi) = \left(   1
                            + A \sum_{k=0}^{4}
                                  e^{- \frac{x_{k}^{2} + y_{k}^{2}}{w^{2}}}
                     \right) \rho_{eq}(r)
\end{equation}
with
\begin{equation}
\label{eq:numex_stella_pert_0020}
  \begin{aligned}
    x_{k} & = r \cos(\varphi) - r_{0} \cos(\varphi_{k}) , \\
    y_{k} & = r \sin(\varphi) - r_{0} \sin(\varphi_{k})
  \end{aligned}
\end{equation}
and $\varphi_{k} = 2 \pi k/ 5$ for $k=0, \dots, 4$.
Here, $\rho_{eq}(r)$ is the subsonic and supersonic steady state,
respectively, and $r_{0}$ is the radius, $w$ the width and $A$ the amplitude of
the perturbations.
The boundary conditions are kept frozen at the initial steady state.

For the subsonic case, we set $r_{0}=110$ km and $w=10$ km.
The final time is
$\tend = 1.247314 \times 10^{-3} ~ \mathrm{s}~ \approx 0.36 ~ \sndxtime$.
The small amplitude test is run with $A = 10^{-3}$ and the relative
perturbation errors are displayed in \cref{tab:numex_stella_pert_0010}.
The errors of the well-balanced scheme are consistently smaller by 2-3 orders
of magnitude than errors of the unbalanced scheme.
It is clear that the well-balanced scheme is vastly superior in resolving the
small perturbations.
Furthermore, we observe that the errors of the well-balanced scheme at the
lowest resolution are comparable to the errors of the unbalanced scheme at the
highest resolution.
This is further illustrated in the left panel of
\cref{fig:numex_stella_pert_0010} from which it is apparent that the unbalanced
scheme suffers from large spurious deviations.
Both schemes attain their design second-order accuracy.

For the supersonic case, we use the parameters $r_{0}=375$ km and $w=20$ km.
The final time is
$\tend = 4.518538 \times 10^{-3} ~ \mathrm{s}~ \approx 0.3 0~ \sndxtime$.
The small amplitude test is run with $A = 10^{-3}$ and the relative
perturbation errors are displayed in \cref{tab:numex_stella_pert_0030}.
Like in the previous case, we observe that the errors of the well-balanced
scheme are consistently smaller by several orders of magnitude.
This is further highlighted in the right panel of
\cref{fig:numex_stella_pert_0010} from which it is apparent that the unbalanced
scheme suffers from large spurious differences.

To assess the robustness of the schemes, we run both the subsonic and the
supersonic test case with a hundred times greater perturbation $A = 10^{-1}$.
The final times are identical to the respective small amplitude experiments.
The results are shown in \cref{tab:numex_stella_pert_0020} for the subsonic
case and in \cref{tab:numex_stella_pert_0040}.
As to be expected, the difference between the well-balanced and unbalanced
schemes decreases as the size of the perturbation is increased.
Therefore, we observe that there is no loss in robustness and resolution
capability for large amplitude perturbations with the well-balanced scheme
compared to the unbalanced one.

\begin{table}[htb!]
  \centering
  \begin{tabular}{| r || c | c | c |}
    \hline
    $(N_{r}, N_{\varphi})$ & $relerr_{1}(\delta \rho)$
                           & $relerr_{1}(\delta v_{r})$
                           & $relerr_{1}(\delta p)$          \\
    \hline \hline
      ( 32,  64) & 6.01E-03 / 6.22E-06 & 4.65E-02 / 5.66E-05
                 & 8.63E-03 / 1.39E-06                       \\
      ( 64, 128) & 1.27E-03 / 2.59E-06 & 9.97E-03 / 2.46E-05
                 & 1.66E-03 / 4.66E-07                       \\
      (128, 256) & 2.85E-04 / 7.66E-07 & 2.31E-03 / 6.93E-06
                 & 3.64E-04 / 1.19E-07                       \\
      (256, 512) & 6.71E-05 / 1.23E-07 & 5.57E-04 / 1.13E-06
                 & 8.50E-05 / 2.28E-08                       \\
    \hline
    Order        &     2.16 / 1.87     &     2.13 / 1.88
                 &     2.22 / 1.98                           \\
    \hline
  \end{tabular}
  \caption{Relative perturbation error in density, radial velocity and pressure
           for the subsonic accretion equilibrium with small amplitude density
           perturbations computed with the un-/well-balanced second-order
           schemes.}
  \label{tab:numex_stella_pert_0010}
\end{table}

\begin{table}[htb!]
  \centering
  \begin{tabular}{| r || c | c | c |}
    \hline
    $(N_{r}, N_{\varphi})$ & $relerr_{1}(\delta \rho)$
                           & $relerr_{1}(\delta v_{r})$
                           & $relerr_{1}(\delta p)$          \\
    \hline \hline
      ( 32,  64) & 6.26E-03 / 6.16E-04 & 4.87E-02 / 5.43E-03
                 & 8.63E-03 / 1.33E-04                       \\
      ( 64, 128) & 1.44E-03 / 2.57E-04 & 1.14E-02 / 2.32E-03
                 & 1.67E-03 / 4.46E-05                       \\
      (128, 256) & 3.42E-04 / 7.61E-05 & 2.71E-03 / 6.42E-04
                 & 3.65E-04 / 1.11E-05                       \\
      (256, 512) & 7.52E-05 / 1.22E-05 & 6.10E-04 / 1.03E-04
                 & 8.51E-05 / 2.14E-06                       \\
    \hline
    Order        &     2.12 / 1.87     &     2.10 / 1.90
                 &     2.22 / 1.99                           \\
    \hline
  \end{tabular}
  \caption{Relative perturbation error in density, radial velocity and pressure
           for the subsonic accretion equilibrium with large amplitude density
           perturbations computed with the un-/well-balanced second-order
           schemes.}
  \label{tab:numex_stella_pert_0020}
\end{table}

\begin{table}[htb!]
  \centering
  \begin{tabular}{| r || c | c | c |}
    \hline
    $(N_{r}, N_{\varphi})$ & $relerr_{1}(\delta \rho)$
                           & $relerr_{1}(\delta v_{r})$
                           & $relerr_{1}(\delta p)$          \\
    \hline \hline
      ( 32,  64) & 4.74E-05 / 7.84E-06 & 2.97E-04 / 8.41E-08
                 & 5.52E-05 / 2.14E-07                       \\
      ( 64, 128) & 1.36E-05 / 4.82E-06 & 7.47E-05 / 5.27E-08
                 & 9.95E-06 / 7.63E-08                       \\
      (128, 256) & 3.75E-06 / 1.68E-06 & 1.88E-05 / 1.88E-08
                 & 2.06E-06 / 2.47E-08                       \\
      (256, 512) & 9.34E-07 / 4.36E-07 & 4.70E-06 / 4.79E-09
                 & 4.67E-07 / 6.47E-09                       \\
    \hline
    Order        &     2.12 / 1.40     &     1.99 / 1.39
                 &     2.29 / 1.68                           \\
    \hline
  \end{tabular}
  \caption{Relative perturbation error in density, radial velocity and pressure
           for the supersonic accretion equilibrium with small amplitude
           density perturbations computed with the un-/well-balanced
           second-order schemes.}
  \label{tab:numex_stella_pert_0030}
\end{table}

\begin{table}[htb!]
  \centering
  \begin{tabular}{| r || c | c | c |}
    \hline
    $(N_{r}, N_{\varphi})$ & $relerr_{1}(\delta \rho)$
                           & $relerr_{1}(\delta v_{r})$
                           & $relerr_{1}(\delta p)$          \\
    \hline \hline
      ( 32,  64) & 8.49E-04 / 7.62E-04 & 2.98E-04 / 7.66E-06
                 & 6.36E-05 / 2.27E-05                       \\
      ( 64, 128) & 4.89E-04 / 4.73E-04 & 7.64E-05 / 4.95E-06
                 & 1.47E-05 / 7.46E-06                       \\
      (128, 256) & 1.67E-04 / 1.65E-04 & 1.97E-05 / 1.74E-06
                 & 4.01E-06 / 2.33E-06                       \\
      (256, 512) & 4.30E-05 / 4.26E-05 & 4.87E-06 / 4.36E-07
                 & 9.62E-07 / 5.96E-07                       \\
    \hline
    Order        &     1.45 / 1.40     &     1.98 / 1.39
                 &     2.00 / 1.74                           \\
    \hline
  \end{tabular}
  \caption{Relative perturbation error in density, radial velocity and pressure
           for the supersonic accretion equilibrium with large amplitude
           density perturbations computed with the un-/well-balanced
           second-order schemes.}
  \label{tab:numex_stella_pert_0040}
\end{table}

\begin{figure}[hbt!]
  \centering
  \begin{subfigure}[b]{0.5\textwidth}
    \centering
    \includegraphics[width=\textwidth]{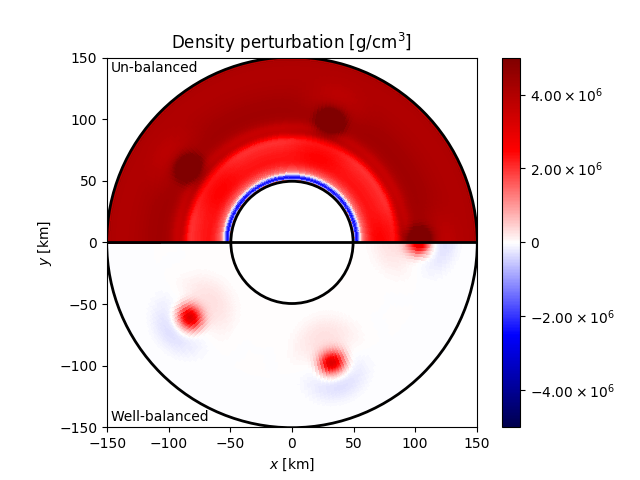}
    \caption{Subsonic steady state}
    \label{fig:numex_stella_pert_0011}
  \end{subfigure}%
  \begin{subfigure}[b]{0.5\textwidth}
    \centering
    \includegraphics[width=\textwidth]{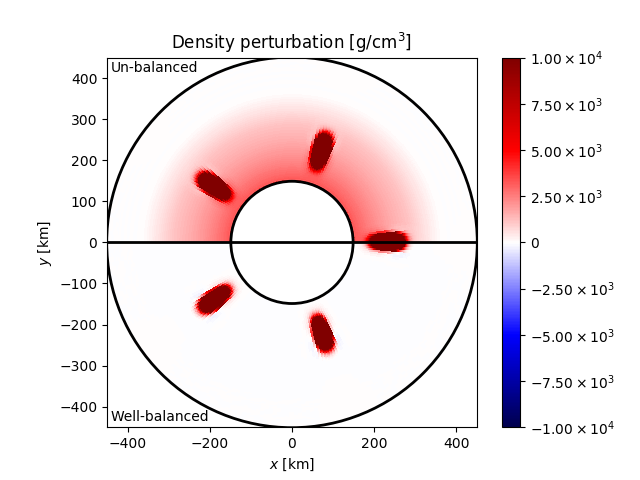}
    \caption{Supersonic steady state}
    \label{fig:numex_stella_pert_0012}
  \end{subfigure}
  \caption{Small amplitude density perturbations on the subsonic (left panel)
           and supersonic (right panel) steady state.
           In both panels, the lower/upper plane show the results obtained
           with the un-/well-balanced schemes at resolution
           $(N_{r}, N_{\varphi}) = (128, 256)$, respectively.
           The color axis is clipped to highlight the absence of spurious
           deviations away from the perturbations for the well-balanced
           scheme.}
  \label{fig:numex_stella_pert_0010}
\end{figure}

\subsubsection{Full problem}
\label{subsubsec:numex_stella_sasi}
As a final case, we test the ability of the schemes to preserve the full
problem joining the sub- and super-sonic regions with a standing shock.
We evolve the setup for several characteristic time scales
$\tend = 4 \sndxtime$.
The outer radial boundary is kept frozen at the initial state and we impose
outflow conditions at the lower boundary.
As noted previously, the HLL Riemann solver will not exactly resolve
stationary shocks.
Therefore, we can not expect the well-balanced scheme to exactly preserve the
transonic steady state.

In the right panel of \cref{fig:numex_stella_sasi_0010}, we show the shock
radius as a function of time for several resolutions
$(N_{r}, N_{\varphi}) = (128, 256), (256, 512), (512, 1024)$.
The latter is simply evaluated by determining the radius of the maximum
absolute difference in radial velocity.
From the figure, it is apparent that the well-balanced scheme (blue lines) is
able to preserve the initial shock position very well.
On the other hand, the shock position deviates for the unbalanced schemes.
This is further illustrated in the left panel of
\cref{fig:numex_stella_sasi_0010}, where we display a contour of radial
velocity of the initial condition together with the results obtained with the
well-balanced and unbalanced schemes after two characteristic time scales.
We observe that the results of the well-balanced scheme are virtually
indistinguishable from the initial conditions.

However, we note that a thorough analysis of the standing accretion shock
instability onset and dynamics is beyond the scope of the present paper.

\begin{figure}[hbt!]
  \centering
  \begin{subfigure}[b]{0.5\textwidth}
    \centering
    \includegraphics[width=\textwidth]{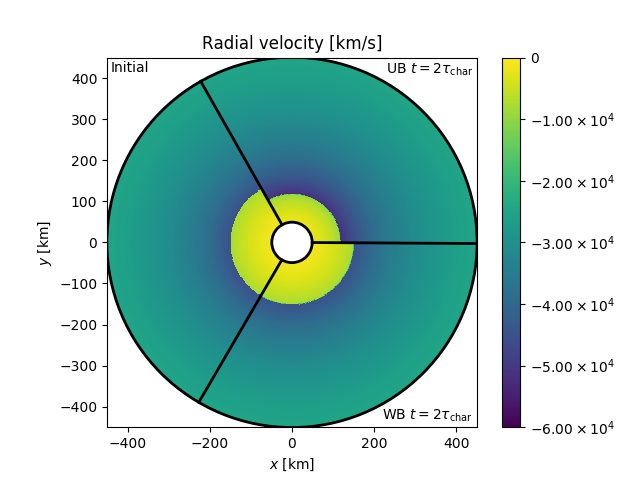}
    \caption{Accretion shock}
    \label{fig:numex_stella_sasi_0011}
  \end{subfigure}%
  \begin{subfigure}[b]{0.5\textwidth}
    \centering
    \includegraphics[width=\textwidth]{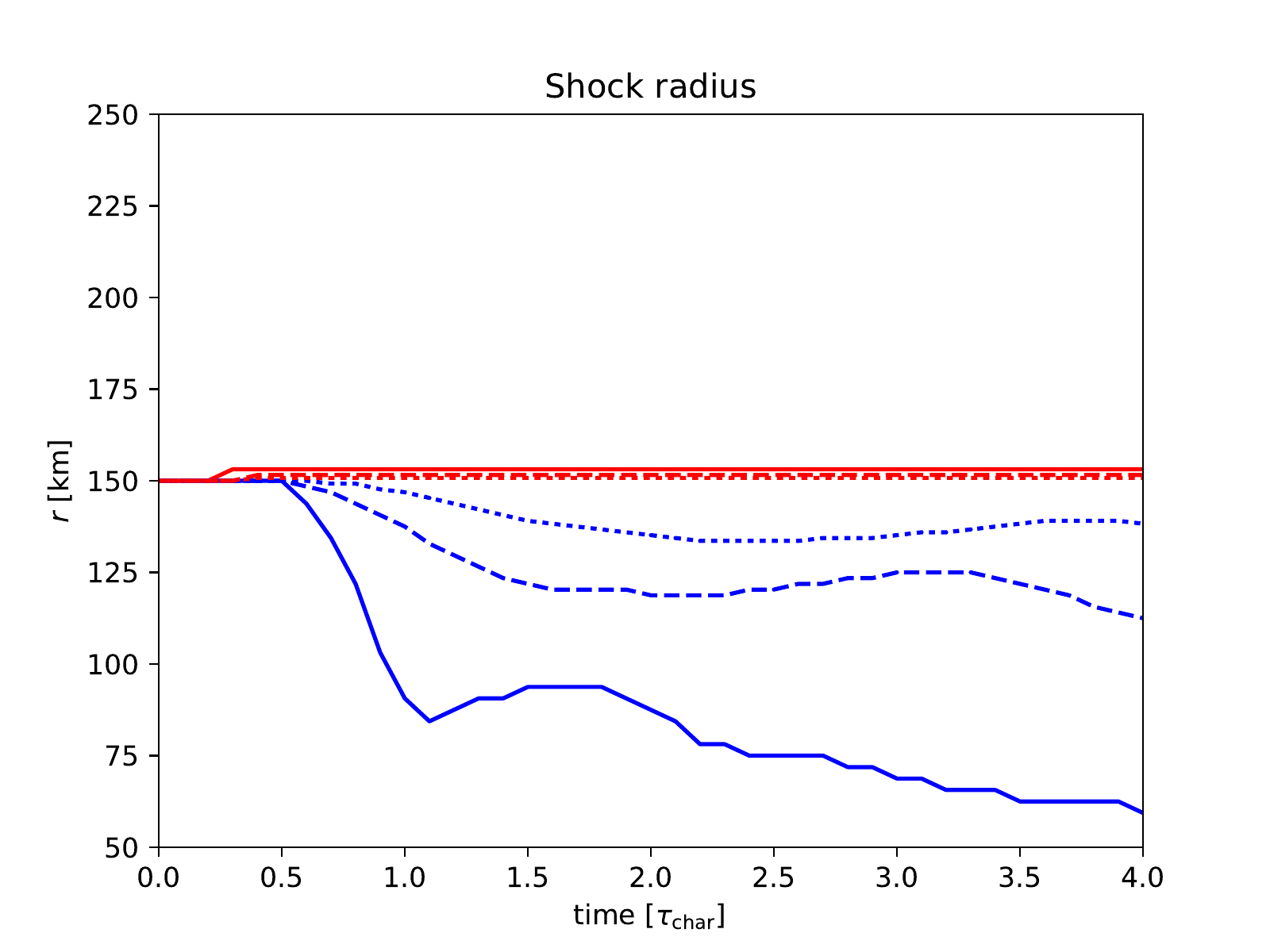}
    \caption{Shock radius}
    \label{fig:numex_stella_sasi_0012}
  \end{subfigure}
  \caption{The left panel shows the radial velocity contour for resolution
           $(N_{r}, N_{\varphi}) = (256, 512)$.
           The right panel shows the shock radius as a function of time for
           un-/well-balanced (blue/red lines) for resolutions
           $(N_{r}, N_{\varphi}) = (128, 256), (256, 512), (512, 1024)$
           (solid, dashed, dotted lines).}
  \label{fig:numex_stella_sasi_0010}
\end{figure}

\clearpage

\section{Conclusion}
\label{sec:conc}
In this paper, we have presented novel well-balanced first- and second-order
accurate finite volume schemes for the Euler equations with gravity.
The schemes are able to exactly (up to round-off errors) preserve any
one-dimensional steady adiabatic flow.
Flows of this type are an idealized model for accretion and wind phenomena
commonly encountered in astrophysics.
The method is based on a local equilibrium reconstruction combined with a
well-balanced source term discretization.
The schemes are extended to cylindrical and spherical geometries.
A dimension-by-dimension extension to multiple dimensions is also proposed.
However, the latter is only exactly well-balanced for multi-dimensional states
with streamlines aligned along a computational axis.
The schemes' performance and robustness are verified on several numerical
experiments.
The last test case consists of a model stellar accretion setup commonly
encountered in core-collapse supernovae scenarios and features a complex
multi-physics equation of state.

The current paper deals with a large class of adiabatic steady states.
However, there are scenarios where the flow does not proceed adiabatically,
e.g. due to radiation losses.
This is especially the case in astrophysics.
Developing schemes capable of balancing such non-adiabatic steady states is
indeed worthwhile.
Moreover, the current schemes are limited to steady states with streamlines
aligned with one computational axis.
Although the usage of curvilinear coordinates may help to deal with this
limitation, it would be computationally desirable to remove this restriction.
For instance, cylindrical and spherical coordinates feature coordinate
singularities which have implications for the resolution and regularity of the
grid and, thereby, the size of time steps. An extension beyond second-order
accuracy is also highly desirable. Such extensions are subject to current
research and will be dealt with in forthcoming publications.

\section*{Acknowledgments}
The work was supported by the Swiss National Science Foundation (SNSF) under
grant 200021-169631.
The authors acknowledge the computational resources provided by the EULER
cluster of ETHZ.
The one-dimensional numerical algorithm was implemented in Julia
\cite{Bezanson2017}. All post-processing and plotting was done using the
outstanding Python packages NumPy and SciPy \cite{scipy}, and Matplotlib
\cite{matplotlib}.
\bibliographystyle{elsarticle-num.bst}

\let\jnl=\rm
\def\aap{\jnl{A\&A}}               %
\def\apj{\jnl{ApJ}}                %
\def\apjl{\jnl{ApJ}}               %
\def\apjs{\jnl{ApJS}}              %
\def\physrep{\jnl{Phys.~Rep.}} %

\bibliography{refs.bib}

\appendix
\section{Local equilibrium determination for general \eos}
\label{sec:app_loc_eq_gen}
Evaluating the equilibrium $\bW_{eq,i}$ in $x$ for a general convex \eos
requires a slightly different algorithm than the one presented in the main
text, i.e. \cref{algo:equilibrium_ideal}.
The differences are mainly due to the fact that
$\Be_\ast$ and $\rho_{\ast}$ cannot be computed analytically. Therefore, instead
of comparing $\rho^{(k)}$ with $\rho_\ast$ to determine if it switched from the
sub- to the supersonic branch (or vice versa), we propose to look at the
derivative of $\Be$ instead. The algorithm terminates for four reasons: a) the
equilibrium is found, b) the algorithm fails to make any progress towards a
root, c) the algorithm has converged towards the minimum, or d) the maximum
number of iterations is reached. In the latter three cases we consider the
algorithm to have failed to find an equilibrium. Subsequently, that cell will be
marked and the standard reconstruction and source term are used in that cell.

\begin{algorithm}[htb]
  Initial guess $\rho^{(0)} = \rho_{0}$\;
  \For{k = 0, 1, 2, ...}{
    $\rho^{(trial)} = \rho^{(k)}
    - \sign(\frac{\Be(\rho^{(k)})}{\Be^\prime(\rho^{(k)})})
    \min(
    \abs{\frac{\Be(\rho^{(k)})}{\Be^\prime(\rho^{(k)})}},
    \rho^{(k)}/4
    )$\;

    $\rho^{(trial)} = \max(0, \rho^{(trial)})$\;

    \For{$\ell$ = 0, 1, 2, ...} {
      \If{$\Be^\prime(\rho^{(0)}) ~ \Be^\prime(\rho^{(trial)}) < 0$} {
        $\rho^{(trial)} = \frac{1}{2}(\rho^{(trial)} + \rho^{(k)})$
      }
    }

    $\Delta \rho = \rho^{(trial)} - \rho^{(k)}$\;
    $\rho^{(k+1)} = \rho^{(trial)}$\;

    \If{$\abs{\Be(\rho^{(k+1)})} < tol~\abs{\Be_0 - \phi(x)}$} {
      Successfully found the equilibrium. \\
      Stop
    }

    \If{$\abs{\Delta \rho} < tol~\max(\rho^{(0)}, \rho^{(k+1)})$} {
      Failed due to lacking progress per step. \\
      Stop
    }

    \If{$\abs{\Be^\prime(\rho^{(k+1)}) \rho^{(k+1)}} < tol~\abs{\Be(\rho^{(k+1)})}$} {
      Converged to minimum. \\
      Stop
    }
  }
  \caption{Local equilibrium determination for a general convex \eos.}
  \label{algo:equilibrium_general}
\end{algorithm}
\clearpage
\section{Convergence tables}
In this section we present the convergence tables for the numerical experiments
presented in \cref{sec:numex}.

\begin{table}[htbp]
  \begin{center}
    \begin{tabular}{r
                S[table-format=3.2e2]r
                S[table-format=3.2e2]r
                S[table-format=3.2e2]r
}
\toprule
N & 
\multicolumn{2}{c}{unbalanced} & 
\multicolumn{2}{c}{hydrostatic} & 
\multicolumn{2}{c}{full} \\
$M = \num{0}$ & 
\multicolumn{1}{c}{$\err_1($$\delta p$$)$} & \multicolumn{1}{c}{Rate} & 
\multicolumn{1}{c}{$\err_1($$\delta p$$)$} & \multicolumn{1}{c}{Rate} & 
\multicolumn{1}{c}{$\err_1($$\delta p$$)$} & \multicolumn{1}{c}{Rate} \\
\midrule
 32 &  3.38e-05  &      --  &  6.78e-15  &      --  &  6.70e-15  &      --   \\
 64 &  7.05e-06  &     2.26 &  6.52e-15  &     0.06 &  6.85e-15  &    -0.03  \\
128 &  1.60e-06  &     2.14 &  6.65e-15  &    -0.03 &  6.14e-15  &     0.16  \\
256 &  3.79e-07  &     2.07 &  6.82e-15  &    -0.04 &  6.80e-15  &    -0.15  \\
512 &  9.23e-08  &     2.04 &  7.38e-15  &    -0.11 &  7.06e-15  &    -0.05  \\
1024 &  2.28e-08  &     2.02 &  7.04e-15  &     0.07 &  6.96e-15  &     0.02  \\
2048 &  5.65e-09  &     2.01 &  7.50e-15  &    -0.09 &  5.32e-15  &     0.39  \\
\bottomrule
\end{tabular}
    \begin{tabular}{r
                S[table-format=3.2e2]r
                S[table-format=3.2e2]r
                S[table-format=3.2e2]r
}
\toprule
N & 
\multicolumn{2}{c}{unbalanced} & 
\multicolumn{2}{c}{hydrostatic} & 
\multicolumn{2}{c}{full} \\
$M = \num{0.01}$ & 
\multicolumn{1}{c}{$\err_1($$\delta p$$)$} & \multicolumn{1}{c}{Rate} & 
\multicolumn{1}{c}{$\err_1($$\delta p$$)$} & \multicolumn{1}{c}{Rate} & 
\multicolumn{1}{c}{$\err_1($$\delta p$$)$} & \multicolumn{1}{c}{Rate} \\
\midrule
 32 &  1.27e-04  &      --  &  1.27e-04  &      --  &  4.64e-15  &      --   \\
 64 &  2.36e-05  &     2.43 &  2.63e-05  &     2.27 &  5.00e-15  &    -0.11  \\
128 &  5.23e-06  &     2.17 &  6.32e-06  &     2.06 &  4.92e-15  &     0.02  \\
256 &  1.24e-06  &     2.08 &  1.61e-06  &     1.97 &  5.51e-15  &    -0.16  \\
512 &  3.02e-07  &     2.04 &  4.25e-07  &     1.92 &  4.90e-15  &     0.17  \\
1024 &  7.45e-08  &     2.02 &  1.07e-07  &     2.00 &  4.70e-15  &     0.06  \\
2048 &  1.85e-08  &     2.01 &  2.64e-08  &     2.01 &  4.34e-15  &     0.11  \\
\bottomrule
\end{tabular}
    \begin{tabular}{r
                S[table-format=3.2e2]r
                S[table-format=3.2e2]r
                S[table-format=3.2e2]r
}
\toprule
N & 
\multicolumn{2}{c}{unbalanced} & 
\multicolumn{2}{c}{hydrostatic} & 
\multicolumn{2}{c}{full} \\
$M = \num{2.5}$ & 
\multicolumn{1}{c}{$\err_1($$\delta p$$)$} & \multicolumn{1}{c}{Rate} & 
\multicolumn{1}{c}{$\err_1($$\delta p$$)$} & \multicolumn{1}{c}{Rate} & 
\multicolumn{1}{c}{$\err_1($$\delta p$$)$} & \multicolumn{1}{c}{Rate} \\
\midrule
 32 &  4.28e-04  &      --  &  5.19e-04  &      --  &  6.42e-13  &      --   \\
 64 &  8.98e-05  &     2.25 &  1.13e-04  &     2.21 &  6.40e-13  &     0.00  \\
128 &  2.06e-05  &     2.13 &  2.62e-05  &     2.10 &  6.34e-13  &     0.01  \\
256 &  4.92e-06  &     2.06 &  6.34e-06  &     2.05 &  6.26e-13  &     0.02  \\
512 &  1.20e-06  &     2.03 &  1.56e-06  &     2.02 &  6.02e-13  &     0.06  \\
1024 &  2.98e-07  &     2.02 &  3.86e-07  &     2.01 &  5.95e-13  &     0.02  \\
2048 &  7.41e-08  &     2.01 &  9.61e-08  &     2.01 &  5.29e-13  &     0.17  \\
\bottomrule
\end{tabular}
    \caption{Convergence table of the second-order methods for
      \cref{numex:gaussian_bump}, with $M = 0, 0.01, 2.5$ and $A = 0$.}
    \label{tab:gaussian_bump_wb}
  \end{center}
\end{table}

\begin{table}[htbp]
  \begin{center}
    \begin{tabular}{r
                S[table-format=3.2e2]r
                S[table-format=3.2e2]r
                S[table-format=3.2e2]r
}
\toprule
N & 
\multicolumn{2}{c}{unbalanced} & 
\multicolumn{2}{c}{hydrostatic} & 
\multicolumn{2}{c}{full} \\
$M = \num{0}$ & 
\multicolumn{1}{c}{$\err_1($$\delta p$$)$} & \multicolumn{1}{c}{Rate} & 
\multicolumn{1}{c}{$\err_1($$\delta p$$)$} & \multicolumn{1}{c}{Rate} & 
\multicolumn{1}{c}{$\err_1($$\delta p$$)$} & \multicolumn{1}{c}{Rate} \\
\midrule
 32 &  3.90e-06  &      --  &  3.32e-08  &      --  &  3.33e-08  &      --   \\
 64 &  9.80e-07  &     1.99 &  1.25e-08  &     1.40 &  1.25e-08  &     1.41  \\
128 &  2.46e-07  &     2.00 &  4.15e-09  &     1.60 &  4.13e-09  &     1.60  \\
256 &  6.14e-08  &     2.00 &  1.08e-09  &     1.94 &  1.14e-09  &     1.85  \\
512 &  1.53e-08  &     2.00 &  2.77e-10  &     1.97 &  2.95e-10  &     1.95  \\
1024 &  3.83e-09  &     2.00 &  6.93e-11  &     2.00 &  6.92e-11  &     2.09  \\
2048 &  9.58e-10  &     2.00 &  1.67e-11  &     2.05 &  1.67e-11  &     2.05  \\
\bottomrule
\end{tabular}
    \begin{tabular}{r
                S[table-format=3.2e2]r
                S[table-format=3.2e2]r
                S[table-format=3.2e2]r
}
\toprule
N & 
\multicolumn{2}{c}{unbalanced} & 
\multicolumn{2}{c}{hydrostatic} & 
\multicolumn{2}{c}{full} \\
$M = \num{0.01}$ & 
\multicolumn{1}{c}{$\err_1($$\delta p$$)$} & \multicolumn{1}{c}{Rate} & 
\multicolumn{1}{c}{$\err_1($$\delta p$$)$} & \multicolumn{1}{c}{Rate} & 
\multicolumn{1}{c}{$\err_1($$\delta p$$)$} & \multicolumn{1}{c}{Rate} \\
\midrule
 32 &  2.16e-05  &      --  &  1.57e-05  &      --  &  3.15e-08  &      --   \\
 64 &  4.27e-06  &     2.34 &  2.76e-06  &     2.51 &  1.26e-08  &     1.33  \\
128 &  9.77e-07  &     2.13 &  5.98e-07  &     2.21 &  4.15e-09  &     1.60  \\
256 &  2.35e-07  &     2.06 &  1.40e-07  &     2.09 &  1.13e-09  &     1.88  \\
512 &  5.77e-08  &     2.03 &  3.39e-08  &     2.04 &  2.91e-10  &     1.95  \\
1024 &  1.43e-08  &     2.01 &  8.36e-09  &     2.02 &  6.87e-11  &     2.08  \\
2048 &  3.56e-09  &     2.01 &  2.07e-09  &     2.01 &  1.66e-11  &     2.05  \\
\bottomrule
\end{tabular}
    \begin{tabular}{r
                S[table-format=3.2e2]r
                S[table-format=3.2e2]r
                S[table-format=3.2e2]r
}
\toprule
N & 
\multicolumn{2}{c}{unbalanced} & 
\multicolumn{2}{c}{hydrostatic} & 
\multicolumn{2}{c}{full} \\
$M = \num{2.5}$ & 
\multicolumn{1}{c}{$\err_1($$\delta p$$)$} & \multicolumn{1}{c}{Rate} & 
\multicolumn{1}{c}{$\err_1($$\delta p$$)$} & \multicolumn{1}{c}{Rate} & 
\multicolumn{1}{c}{$\err_1($$\delta p$$)$} & \multicolumn{1}{c}{Rate} \\
\midrule
 32 &  1.51e-04  &      --  &  1.99e-04  &      --  &  2.96e-08  &      --   \\
 64 &  3.19e-05  &     2.24 &  4.37e-05  &     2.18 &  1.11e-08  &     1.42  \\
128 &  7.34e-06  &     2.12 &  1.03e-05  &     2.09 &  4.16e-09  &     1.41  \\
256 &  1.76e-06  &     2.06 &  2.49e-06  &     2.04 &  1.18e-09  &     1.81  \\
512 &  4.30e-07  &     2.03 &  6.14e-07  &     2.02 &  3.20e-10  &     1.89  \\
1024 &  1.06e-07  &     2.02 &  1.52e-07  &     2.01 &  7.85e-11  &     2.03  \\
2048 &  2.65e-08  &     2.01 &  3.79e-08  &     2.01 &  1.90e-11  &     2.05  \\
\bottomrule
\end{tabular}
    \caption{Convergence table of the second-order methods for
      \cref{numex:gaussian_bump}, with $M = 0, 0.01, 2.5$ and $A = 10^{-6}$.}
    \label{tab:gaussian_bump_small}
  \end{center}
\end{table}

\begin{table}[htbp]
  \begin{center}
    \begin{tabular}{r
                S[table-format=3.2e2]r
                S[table-format=3.2e2]r
                S[table-format=3.2e2]r
}
\toprule
N & 
\multicolumn{2}{c}{unbalanced} & 
\multicolumn{2}{c}{hydrostatic} & 
\multicolumn{2}{c}{full} \\
$M = \num{0}$ & 
\multicolumn{1}{c}{$\err_1($$p$$)$} & \multicolumn{1}{c}{Rate} & 
\multicolumn{1}{c}{$\err_1($$p$$)$} & \multicolumn{1}{c}{Rate} & 
\multicolumn{1}{c}{$\err_1($$p$$)$} & \multicolumn{1}{c}{Rate} \\
\midrule
 32 &  1.29e-02  &      --  &  1.37e-02  &      --  &  1.38e-02  &      --   \\
 64 &  8.47e-03  &     0.61 &  8.33e-03  &     0.72 &  8.38e-03  &     0.72  \\
128 &  4.13e-03  &     1.04 &  3.99e-03  &     1.06 &  4.00e-03  &     1.06  \\
256 &  1.74e-03  &     1.25 &  1.76e-03  &     1.18 &  1.76e-03  &     1.18  \\
512 &  6.90e-04  &     1.33 &  7.16e-04  &     1.30 &  7.09e-04  &     1.31  \\
1024 &  4.16e-04  &     0.73 &  4.19e-04  &     0.77 &  4.18e-04  &     0.76  \\
2048 &  1.64e-04  &     1.34 &  1.65e-04  &     1.34 &  1.65e-04  &     1.34  \\
\bottomrule
\end{tabular}
    \begin{tabular}{r
                S[table-format=3.2e2]r
                S[table-format=3.2e2]r
                S[table-format=3.2e2]r
}
\toprule
N & 
\multicolumn{2}{c}{unbalanced} & 
\multicolumn{2}{c}{hydrostatic} & 
\multicolumn{2}{c}{full} \\
$M = \num{0.01}$ & 
\multicolumn{1}{c}{$\err_1($$p$$)$} & \multicolumn{1}{c}{Rate} & 
\multicolumn{1}{c}{$\err_1($$p$$)$} & \multicolumn{1}{c}{Rate} & 
\multicolumn{1}{c}{$\err_1($$p$$)$} & \multicolumn{1}{c}{Rate} \\
\midrule
 32 &  1.81e-02  &      --  &  1.85e-02  &      --  &  1.85e-02  &      --   \\
 64 &  7.49e-03  &     1.27 &  7.38e-03  &     1.33 &  7.51e-03  &     1.30  \\
128 &  3.43e-03  &     1.13 &  3.38e-03  &     1.13 &  3.36e-03  &     1.16  \\
256 &  1.30e-03  &     1.40 &  1.30e-03  &     1.38 &  1.30e-03  &     1.37  \\
512 &  8.17e-04  &     0.67 &  8.36e-04  &     0.63 &  8.33e-04  &     0.64  \\
1024 &  3.96e-04  &     1.04 &  4.00e-04  &     1.06 &  4.00e-04  &     1.06  \\
2048 &  2.07e-04  &     0.94 &  2.07e-04  &     0.95 &  2.07e-04  &     0.95  \\
\bottomrule
\end{tabular}
    \begin{tabular}{r
                S[table-format=3.2e2]r
                S[table-format=3.2e2]r
                S[table-format=3.2e2]r
}
\toprule
N & 
\multicolumn{2}{c}{unbalanced} & 
\multicolumn{2}{c}{hydrostatic} & 
\multicolumn{2}{c}{full} \\
$M = \num{2.5}$ & 
\multicolumn{1}{c}{$\err_1($$p$$)$} & \multicolumn{1}{c}{Rate} & 
\multicolumn{1}{c}{$\err_1($$p$$)$} & \multicolumn{1}{c}{Rate} & 
\multicolumn{1}{c}{$\err_1($$p$$)$} & \multicolumn{1}{c}{Rate} \\
\midrule
 32 &  3.14e-02  &      --  &  3.02e-02  &      --  &  2.87e-02  &      --   \\
 64 &  1.41e-02  &     1.16 &  1.45e-02  &     1.06 &  1.04e-02  &     1.46  \\
128 &  5.32e-03  &     1.40 &  5.23e-03  &     1.47 &  4.00e-03  &     1.38  \\
256 &  1.79e-03  &     1.57 &  1.76e-03  &     1.57 &  1.86e-03  &     1.11  \\
512 &  5.61e-04  &     1.67 &  5.56e-04  &     1.66 &  5.74e-04  &     1.70  \\
1024 &  1.49e-04  &     1.91 &  1.50e-04  &     1.89 &  1.51e-04  &     1.93  \\
2048 &  3.56e-05  &     2.06 &  3.58e-05  &     2.07 &  3.61e-05  &     2.06  \\
\bottomrule
\end{tabular}
    \caption{Convergence table of the second-order methods for
      \cref{numex:gaussian_bump}, with $M = 0, 0.01, 2.5$ and $A = 1$.}
    \label{tab:gaussian_bump_shock}
  \end{center}
\end{table}

\begin{table}[htbp]
  \begin{center}
    \begin{tabular}{r
                S[table-format=3.2e2]r
                S[table-format=3.2e2]r
}
\toprule
N & 
\multicolumn{2}{c}{unbalanced} & 
\multicolumn{2}{c}{full} \\
$M = \num{0.9}$ & 
\multicolumn{1}{c}{$\err_1($$\delta p$$)$} & \multicolumn{1}{c}{Rate} & 
\multicolumn{1}{c}{$\err_1($$\delta p$$)$} & \multicolumn{1}{c}{Rate} \\
\midrule
 32 &  2.31e-01  &      --  &  3.79e-13  &      --   \\
 64 &  6.71e-02  &     1.78 &  3.73e-13  &     0.02  \\
128 &  1.28e-02  &     2.39 &  3.75e-13  &    -0.01  \\
256 &  2.20e-03  &     2.54 &  3.93e-13  &    -0.07  \\
512 &  4.38e-04  &     2.33 &  3.64e-13  &     0.11  \\
1024 &  9.62e-05  &     2.19 &  2.91e-13  &     0.32  \\
2048 &  2.24e-05  &     2.10 &  3.43e-13  &    -0.24  \\
\bottomrule
\end{tabular}
    \begin{tabular}{r
                S[table-format=3.2e2]r
                S[table-format=3.2e2]r
}
\toprule
N & 
\multicolumn{2}{c}{unbalanced} & 
\multicolumn{2}{c}{full} \\
$M = \num{2}$ & 
\multicolumn{1}{c}{$\err_1($$\delta p$$)$} & \multicolumn{1}{c}{Rate} & 
\multicolumn{1}{c}{$\err_1($$\delta p$$)$} & \multicolumn{1}{c}{Rate} \\
\midrule
 32 &  2.03e-03  &      --  &  4.74e-14  &      --   \\
 64 &  6.41e-04  &     1.66 &  4.72e-14  &     0.00  \\
128 &  1.80e-04  &     1.84 &  4.65e-14  &     0.02  \\
256 &  4.76e-05  &     1.92 &  4.59e-14  &     0.02  \\
512 &  1.22e-05  &     1.96 &  4.62e-14  &    -0.01  \\
1024 &  3.11e-06  &     1.98 &  4.23e-14  &     0.13  \\
2048 &  7.83e-07  &     1.99 &  2.99e-14  &     0.50  \\
\bottomrule
\end{tabular}
    \caption{Convergence table of the second-order methods for
      \cref{numex:bondi}, with $M = 0.9, 2.0$ and $A = 0$.}
    \label{tab:bondi_wb}
  \end{center}
\end{table}

\begin{table}[htbp]
  \begin{center}
    \begin{tabular}{r
                S[table-format=3.2e2]r
                S[table-format=3.2e2]r
}
\toprule
N & 
\multicolumn{2}{c}{unbalanced} & 
\multicolumn{2}{c}{full} \\
$M = \num{0.9}$ & 
\multicolumn{1}{c}{$\err_1($$\delta p$$)$} & \multicolumn{1}{c}{Rate} & 
\multicolumn{1}{c}{$\err_1($$\delta p$$)$} & \multicolumn{1}{c}{Rate} \\
\midrule
 32 &  9.17e-02  &      --  &  3.27e-06  &      --   \\
 64 &  1.41e-02  &     2.70 &  1.14e-06  &     1.52  \\
128 &  2.05e-03  &     2.78 &  4.01e-07  &     1.51  \\
256 &  3.70e-04  &     2.47 &  1.03e-07  &     1.96  \\
512 &  7.75e-05  &     2.26 &  2.68e-08  &     1.94  \\
1024 &  1.77e-05  &     2.13 &  6.62e-09  &     2.02  \\
2048 &  4.21e-06  &     2.07 &  1.56e-09  &     2.09  \\
\bottomrule
\end{tabular}
    \begin{tabular}{r
                S[table-format=3.2e2]r
                S[table-format=3.2e2]r
}
\toprule
N & 
\multicolumn{2}{c}{unbalanced} & 
\multicolumn{2}{c}{full} \\
$M = \num{2}$ & 
\multicolumn{1}{c}{$\err_1($$\delta p$$)$} & \multicolumn{1}{c}{Rate} & 
\multicolumn{1}{c}{$\err_1($$\delta p$$)$} & \multicolumn{1}{c}{Rate} \\
\midrule
 32 &  1.09e-03  &      --  &  4.01e-06  &      --   \\
 64 &  3.74e-04  &     1.54 &  1.37e-06  &     1.55  \\
128 &  1.08e-04  &     1.79 &  4.43e-07  &     1.63  \\
256 &  2.91e-05  &     1.90 &  1.15e-07  &     1.95  \\
512 &  7.53e-06  &     1.95 &  2.85e-08  &     2.01  \\
1024 &  1.92e-06  &     1.97 &  7.20e-09  &     1.99  \\
2048 &  4.84e-07  &     1.99 &  1.74e-09  &     2.05  \\
\bottomrule
\end{tabular}
    \caption{Convergence table of the second-order methods for
      \cref{numex:bondi}, with $M = 0.9, 2.0$ and $A = 10^{-4}$.}
    \label{tab:bondi_small}
  \end{center}
\end{table}

\begin{table}[htbp]
  \begin{center}
    \begin{tabular}{r
                S[table-format=3.2e2]r
                S[table-format=3.2e2]r
}
\toprule
N & 
\multicolumn{2}{c}{unbalanced} & 
\multicolumn{2}{c}{full} \\
$M = \num{0.9}$ & 
\multicolumn{1}{c}{$\err_1($$p$$)$} & \multicolumn{1}{c}{Rate} & 
\multicolumn{1}{c}{$\err_1($$p$$)$} & \multicolumn{1}{c}{Rate} \\
\midrule
 32 &  1.42e+00  &      --  &  1.40e+00  &      --   \\
 64 &  1.06e+00  &     0.42 &  1.02e+00  &     0.46  \\
128 &  6.15e-01  &     0.79 &  6.04e-01  &     0.75  \\
256 &  3.46e-01  &     0.83 &  3.48e-01  &     0.80  \\
512 &  1.90e-01  &     0.86 &  1.90e-01  &     0.87  \\
1024 &  9.64e-02  &     0.98 &  9.66e-02  &     0.98  \\
2048 &  4.46e-02  &     1.11 &  4.47e-02  &     1.11  \\
\bottomrule
\end{tabular}
    \caption{Convergence table of the second-order methods for
      \cref{numex:bondi}, with $M = 0.9$ and $A = 100$.}
    \label{tab:bondi_large}
  \end{center}
\end{table}

\end{document}